\numberwithin{equation}{section}
\newtheorem{thrm}{Theorem}[section]
\newtheorem{lemma}[thrm]{Lemma}
\newtheorem{prop}[thrm]{Proposition}
\newtheorem{cor}[thrm]{Corollary}
\newtheorem{dfn}[thrm]{Definition}
\newtheorem{rmrk}[thrm]{Remark}
\newtheorem{conv}[thrm]{Convention}
\newtheorem{conj}[thrm]{Conjecture}
\newcommand{\vol}{\, Vol_{\theta}}
\def\gr{\nabla f}
\def\bi{\nabla}
\begin{document}

\begin{abstract}
We prove a CR Obata type result that if the first
positive eigenvalue of the sub-Laplacian on a compact strictly pseudoconvex
pseudohermitian manifold with a divergence free pseudohermitian torsion
takes the smallest possible value then, up to a homothety of the
pseudohermitian structure, the manifold is the standart Sasakian unit sphere.  { We also give a  version of this theorem using the existence of a function with traceless  horizontal Hessian  on a complete, with respect to Webster's metric, pseudohermitian manifold. }
\end{abstract}

\keywords{Lichnerowicz-Obata theorem \textperiodcentered\ Pseudohermitian
manifold \textperiodcentered\ Webster metric \textperiodcentered\
Tanaka-Webster curvature \textperiodcentered\ Pseudohermitian torsion
\textperiodcentered\ Sub-Laplacian}
\subjclass[2010]{53C26, 53C25, 58J60, 32V05, 32V20, 53C56}
\title[An Obata type result for the first eigenvalue of the sub-Laplacian on
a CR manifold]{An Obata type result for the first eigenvalue of the
sub-Laplacian on a CR manifold with a divergence free torsion}
\date{\today }
\author{S. Ivanov}
\address[Stefan Ivanov]{University of Sofia, Faculty of Mathematics and
Informatics, blvd. James Bourchier 5, 1164, Sofia, Bulgaria}
\email{ivanovsp@fmi.uni-sofia.bg}
\author{D. Vassilev}
\address[Dimiter Vassilev]{ Department of Mathematics and Statistics\\
University of New Mexico\\
Albuquerque, New Mexico, 87131-0001}
\email{vassilev@math.unm.edu}
\maketitle
\tableofcontents

%\thanks{This work has been partially funded by}

\setcounter{tocdepth}{2}

\section{Introduction}

The classical theorems of Lichnerowicz \cite{Li} and Obata \cite{O3} give
correspondingly a lower bound for the first eigenvalue of the Laplacian on a
compact manifold with a lower Ricci bound and characterize the case of
equality. In \cite{Li} it was shown that for every compact Riemannian
manifold $(M,h)$ of dimension $n$ for which the Ricci curvature is greater than or
equal to that of the round unit $n$-dimensional sphere $S^n(1)$, i.e., $
Ric(X,Y)\geq (n-1)h(X,Y),$
we have that the first positive eigenvalue $\lambda_1$ of the (positive)
Laplace operator is greater than or equal to the first eigenvalue of the
sphere, $\lambda_1\geq n.$

Subsequently in \cite{O3} it was shown that the lower bound for
the eigenvalue is achieved iff the Riemannian manifold
is isometric to $S^n(1)$. Lichnerowicz proved his result using the
classical Bochner-Weitzenb\"ock formula. In turn, Obata showed
that under these assumptions the trace-free part of the Riemannian Hessian of an eigenfunction $f$ with eigenvalue $\lambda_1=n$ vanishes,
\begin{equation}\label{clasob}D^2 f = -f h,
\end{equation}
after which he defined an isometry using analysis based on
the geodesics and
Hessian comparison of the distance function from a point. More precisely, Obata showed in \cite{O3} that if on a complete Riemannian manifold there exists a non-constant function satisfying \eqref{clasob} then the manifold is isometric to the unit sphere. Later Gallot \cite%
{Gallot79} generalized these results to statements involving the higher
eigenvalues and corresponding eigenfunctions of the Laplace operator.

From the sub-ellipticity of the sub-Laplacian defined in many well
studied sub-Riemannian geometries it follows that its spectrum is
discrete on a compact manifold. It is therefore natural to ask if
there is a sub-Riemannian version of the above results. In fact, a CR analogue of the Lichnerowiecz theorem was found by
Greenleaf \cite{Gr} for dimensions $2n+1>5$,  while the corresponding results for  $n=2$ and  $n=1$ were achieved later in \cite{LL}  and
\cite{Chi06}, respectively. As a continuation of this line of results in the setting of geometries modeled on the rank  one  symmetric spaces in \cite{IPV1} it was proven a  quaternionic contact version of the Lichnerowicz result.

{ The CR Lichnerowicz type result states that on a compact $2n+1$-dimensional strictly pseudoconvex pseudohermitian manifold satisfying a certain positivity condition the first eigenvalue of the sub-laplacian is grater or equal to that of the standard Sasakian sphere}.
For the exact statement of  the CR Lichnerowicz type result we refer the reader to Theorem \ref{main1}. For ease of reference we also include complete proofs of the known
results in the CR case. The presented proof of Theorem \ref{main1} uses the known techniques from  \cite{Gr}, \cite{LL}, \cite{Chi06}, but is based  solely on the non-negativity of the Paneitz operator thereby slightly simplifing the known arguments.
Greenleaf \cite{Gr} showed the result for $n\geq 3$, while S.-Y. Li and
H.-S. Luk adapted Greenleaf's prove to cover the case $n=2$. They also gave
a version of the case $n=1$ assuming further a condition on the covariant
derivative with respect to the Tanaka-Webster connection of the
pseudohermitian torsion tensor. Part b) in Theorem~\ref{main1} was
established by H.-L. Chiu in \cite{Chi06}. We remark that if $n>1$ the Paneitz operator is always non-negative, cf. Lemma \ref{l:GrLee} {  while  in the case $n=1$ the vanishing of the pseudohermitian torsion implies that the Paneitz operator is non-negative,  see \cite{Chi06} and\cite{CCC07}.}

Other relevant for this paper results in the CR case have been
proved in  \cite{CC07,CC09b,CC09a}, \cite{Bar} and \cite{ChW}
adding a corresponding inequality for $n=1$, or characterizing the equality
case in the vanishing pseudohermitian torsion case (the Sasakian case).

The problem of the existence of an Obata-type theorem in pseudohermitian
manifold was considered in \cite{CC09a} where the following CR analogue of
Obata's theorem was conjectured.

\begin{conj}[\protect\cite{CC09a}]
\label{conj1} Let $(M,\theta)$ be a closed pseudohermitian (2n+1)-manifold
with $n\ge 2$. In addition we assume the Paneitz operator is nonnegative if
n = 1. Suppose there is a positive constant $k_0$ such that the
pseudohermitian Ricci curvature $Ric$ and the pseudohermitian torsion $A$
satisfy the inequality \eqref{condm}. If $\frac{n}{n+1}k_0$ is an eigenvalue
of the sub-Laplacian then $(M,\theta)$ is the standard (Sasakian) CR
structure on the unit sphere in $\mathbb{C}^{n+1}$. %
\end{conj}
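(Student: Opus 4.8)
The plan is to read the hypothesis as the equality case of the CR Lichnerowicz estimate and then to integrate the overdetermined system it produces. By Theorem~\ref{main1} the positivity condition \eqref{condm} forces every positive eigenvalue of the sub-Laplacian to obey $\lambda\ge\frac{n}{n+1}k_0$, so assuming that $\frac{n}{n+1}k_0$ is actually attained places us in the rigidity situation. First I would fix an eigenfunction $f$ for the minimal eigenvalue $\lambda_1=\frac{n}{n+1}k_0$, so $\Delta_b f=\lambda_1 f$, and revisit the integral Bochner--Weitzenb\"ock identity underlying Theorem~\ref{main1}. Equality there must saturate each nonnegative term that was discarded in the estimate; the decisive ones are the squared norms of the complex horizontal Hessian, and tracking them yields the CR--Obata system
\begin{equation}\label{crobata}
f_{\alpha\beta}=0,\qquad f_{\alpha\bar\beta}=-\tfrac{\lambda_1}{n}\,f\,h_{\alpha\bar\beta},
\end{equation}
which says precisely that the traceless part of the horizontal Hessian vanishes. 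The nonnegativity of the CR Paneitz operator (Lemma~\ref{l:GrLee} when $n\ge2$, and the standing assumption when $n=1$) is exactly the ingredient that lets me discard the fourth-order Paneitz term and localize equality to \eqref{crobata} together with a compatibility relation coupling $f$ to the torsion $A_{\alpha\beta}$.

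Next I would use \eqref{crobata} and the divergence-free hypothesis $A_{\alpha\beta,}{}^{\beta}=0$ to prove that the pseudohermitian torsion vanishes identically, so that $(M,\theta)$ is Sasakian. Concretely, I would covariantly differentiate the first identity in \eqref{crobata}, commute derivatives by the pseudohermitian commutation relations (which inject curvature and torsion terms), and contract: the divergence $A_{\alpha\beta,}{}^{\beta}$ drops out by hypothesis, leaving an algebraic identity relating $A_{\alpha\beta}f^{\beta}$ to first-order data of $f$ that can be matched against the compatibility relation from the previous step. I expect this to force $A\equiv0$ on the open set where $\nabla_b f\neq0$, after which a continuity argument (this set is dense for a first eigenfunction) propagates $A\equiv0$ over all of $M$.

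With the torsion gone, \eqref{crobata} becomes the genuine Sasakian CR--Obata equation, and the remaining, and hardest, task is the reconstruction: exhibiting a CR-isometry of $(M,\theta)$, up to a homothety of the pseudohermitian structure, onto the standard sphere $S^{2n+1}\subset\mathbb{C}^{n+1}$. Obata handled the Riemannian analogue with geodesics and the Hessian comparison of the distance function, but the sub-Riemannian geodesic flow is too unwieldy for a direct imitation, so I would pass to the Webster metric $g_\theta$. The cleanest route is to isolate \eqref{crobata} as a standalone claim --- that on a manifold complete with respect to $g_\theta$ the existence of a nonconstant solution of the traceless-Hessian equation forces the sphere --- and to prove it by realizing the Sasakian $(M,\theta)$ as the link of its K\"ahler cone $\big(M\times\mathbb{R}_{>0},\,dr^2+r^2g_\theta\big)$. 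The solution $f$ lifts to a function on the cone whose full (K\"ahler/Riemannian) Hessian is a multiple of the metric; Obata's original theorem then identifies the complete cone with flat $\mathbb{C}^{n+1}$, whence its link $M$ is the round Sasakian $S^{2n+1}$. Completeness of $g_\theta$ is what guarantees the cone is complete and the resulting identification is global.

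The main obstacle is this final reconstruction. Proving the lower bound and extracting \eqref{crobata} is essentially Bochner bookkeeping, and the torsion-killing step is a finite tensor computation once the divergence-free hypothesis is available; but converting the pointwise identities of \eqref{crobata} into a global diffeomorphism onto the sphere requires genuinely geometric input and careful control of the Reeb-direction relations for $f_0$ and $f_{0\alpha}$ (which on the sphere encode the imaginary parts of the ambient linear coordinates). I would therefore treat the complete, traceless-Hessian formulation as the technical heart of the argument and verify separately that it indeed reproduces the standard structure equations of $S^{2n+1}$.
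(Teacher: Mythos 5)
You have silently imported the hypothesis $A_{\alpha\beta,}{}^{\beta}=0$, which is not part of Conjecture~\ref{conj1}; the paper likewise establishes the conjecture only under this extra divergence-free assumption (Theorem~\ref{main2}), so your scope matches the paper's, but the assumption should be stated up front rather than appearing midway. Your first step --- saturating the Bochner inequality behind Theorem~\ref{main1} to obtain the pure-trace horizontal Hessian, with nonnegativity of the Paneitz operator (automatic for $n\geq 2$ by Lemma~\ref{l:GrLee}) used to discard the fourth-order term --- is exactly Lemma~\ref{l:hessian in extremal case} and is fine.

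The genuine gap is in the torsion-killing step. One round of ``differentiate $f_{\alpha\beta}=0$, commute, contract'' does not yield $A\equiv 0$: what it yields (after dividing by $n-1$, hence only for $n\geq 2$) is the vertical Hessian relation \eqref{e:vhessian}, and a further differentiation gives only the first-order relation $(\nabla_X A)(Y,\nabla f)=fA(X,Y)+df(\xi)A(X,JY)$ of Lemma~\ref{l:ntor1} --- a transport equation for $A(\cdot,\nabla f)$ along $\nabla f$, not its vanishing. The paper closes the system with two ideas absent from your plan: (i) the divergence-free hypothesis forces $\xi f$ to be again an extremal eigenfunction (Lemma~\ref{c:xi f}), so the same relation applied to $\xi f$ and combined with \eqref{e:vhessian} gives $A\nabla f=0$ (Lemma~\ref{l:ntor01}); and (ii) the pointwise identity $|\nabla f|^{2}|A|^{2}=2|A\nabla f|^{2}$ of Lemma~\ref{l:R1 part}, extracted from the $(2,0)+(0,2)$ part of the curvature, which upgrades $A\nabla f=0$ to $A=0$. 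Your uniform sketch also breaks at $n=1$, where \eqref{eqc01} is unavailable and the paper needs the entirely separate compactness argument of Theorem~\ref{t:A vansihes if div 3D}. Finally, your endgame through the K\"ahler cone is a genuinely different route from the paper's, which verifies $D^{2}f=-fh$ for the Webster metric $h=g+\theta^{2}$ on $M$ itself (Corollary~\ref{c:riem-eigen-fn} together with $\xi^{2}f=-f$) and applies Obata's theorem directly on $(M,h)$. The cone route can be made to work, but two of your supporting claims are wrong as stated: $M\times\mathbb{R}_{>0}$ with the cone metric is never complete (the apex is missing), so completeness of $g_\theta$ does not make the cone complete, and the rigidity you need there is the Tashiro-type classification for $\mathrm{Hess}\,u=c\,g$ rather than Obata's equation $\mathrm{Hess}\,u=-u\,g$. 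The paper's direct reduction avoids both issues and is the cleaner choice here.
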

This conjecture was proved in the case of vanishing pseudohermitian torsion
(Sasakian case) in \cite{CC09a} for $n\ge 2$ and in \cite{CC09b} for $n=1$.

The non-Sasakian case was also considered in \cite{ChW12} where the
Conjecture~\ref{conj1} was established under the following assumptions on
the pseudohermitian torsion (in complex coordinates):
\begin{align*}
& \text{ (i) for } \quad n\geq 2, \qquad A_{\alpha\beta,\, \bar
\beta}=0,\quad \text{ and } \quad A_{\alpha\beta,\, \gamma\bar \gamma}=0,
\quad \text{\cite[Theorem~1.3]{ChW12} }; \\
& \text{ (ii) for } \quad n=1, \qquad A_{11,\, \bar 1}=0,\quad \text{ and }
\quad P_1 f=0 \quad \text{\cite[Theorem~1.4]{ChW12}},
\end{align*}
where $$P_\alpha f=f_{\bar \beta} {^{\bar \beta}} {_{\alpha}}+inA_{\alpha\beta}f^{\beta}$$ is the operator
characterizing CR-pluriharmonic functions when $n=1$, see also the paragraph after Remark \ref{r:non-negative paneitz}. The first condition, $A_{\alpha\beta,\, \bar
\beta}=0$, means that the (horizontal real) divergence of $A$
vanishes,
\begin{equation*}
(\nabla^*A)\,(X)=-(\nabla_{e_a} A)\, (e_a,X)=0.
\end{equation*}

{One purpose of this paper is to establish Conjecture~\ref{conj1} in the
(non-Sasakian) case of a divergence-free pseudohermitian torsion where we prove the following result.}
\begin{thrm}\label{main2} Let $(M, \theta)$ be a compact strictly pseudoconvex
pseudohermitian CR manifold of dimension $2n+1$. Suppose there is a positive
constant $k_0$ such that the pseudohermitian Ricci curvature $Ric$ and the
pseudohermitian torsion $A$ satisfy the inequality
\begin{equation}  \label{condm}
Ric(X,X)+ 4A(X,JX)\geq k_0 g(X,X).
\end{equation}
Furthermore, suppose the \textit{horizontal} divergence of the pseudohermitian
torsion vanishes,
\begin{equation*}  %\label{condm2}
\nabla^* A=0.
\end{equation*}

\begin{itemize}
\item[a)] If $n\geq 2$ and $\lambda=\frac{n}{n+1}k_0$ is an eigenvalue of
the sub-Laplacian, %$$\lambda=\frac{n}{n+1}k_0,$$
then up-to a scaling of $\theta$ by a positive constant $(M,\theta)$ is the
standard (Sasakian) CR structure on the unit sphere in $\mathbb{C}^{n+1}$.

\item[ b)] If $n=1$ and $\lambda=\frac{1}{2}k_0$ is an eigenvalue of the
sub-Laplacian, % $$\lambda=\frac12k_0,$$
the same conclusion can be reached assuming in addition that the Paneitz
operator is non-negative, i.e., for a smooth function $f$
\begin{equation*}
-\int_M P_{f}(\nabla f)Vol_{\theta}\geq 0.
\end{equation*}
\end{itemize}
\end{thrm}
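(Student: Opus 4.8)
The plan is to run the classical Obata strategy in the pseudohermitian setting: feed the extremal eigenfunction into the integral identity underlying the Lichnerowicz-type bound (Theorem \ref{main1}), observe that at the threshold value $\lambda=\tfrac{n}{n+1}k_0$ every non-negative term in that identity is forced to vanish, and read off a CR analogue of Obata's Hessian equation \eqref{clasob}. The genuinely new ingredient, compared with the Sasakian case handled in \cite{CC09a,CC09b}, is the bookkeeping of the torsion terms, which is exactly what the hypothesis $\nabla^*A=0$ is designed to control.

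First I would fix a non-constant real eigenfunction $f$ with $\Delta_b f=\lambda f$. The proof of Theorem \ref{main1} produces, after integrating a Greenleaf-type Bochner formula by parts, an identity schematically of the form
\begin{equation*}
\Big(\tfrac{n+1}{n}\lambda-k_0\Big)\int_M|\nabla f|^2\,Vol_{\theta}
=\int_M\Big[\,c_n\,|f_{\alpha\beta}|^2+\big(Ric+4A(\cdot,J\cdot)-k_0\,g\big)(\nabla f,\nabla f)+\mathcal P(f)\,\Big]\,Vol_{\theta},
\end{equation*}
with $c_n>0$, the middle term non-negative by \eqref{condm}, and the Paneitz contribution $\mathcal P(f)$ non-negative by Lemma \ref{l:GrLee} when $n\ge 2$ and by the standing hypothesis of part (b) when $n=1$. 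The divergence-free condition $\nabla^*A=0$ is precisely what lets one integrate away the stray torsion terms so that the bracket has a definite sign, and it is also what makes the coefficient $\tfrac{n+1}{n}$ (hence the threshold $\tfrac{n}{n+1}k_0$) come out correctly. At $\lambda=\tfrac{n}{n+1}k_0$ the left-hand side vanishes, so each term vanishes, yielding the CR-Obata system
\begin{equation*}
f_{\alpha\beta}=0,\qquad \big(Ric+4A(\cdot,J\cdot)-k_0\,g\big)(\nabla f,\cdot)=0,\qquad \mathcal P(f)=0,
\end{equation*}
the second equality on the open set $\{\nabla f\neq 0\}$. Together with the eigenvalue equation, $f_{\alpha\beta}=0$ says exactly that the traceless horizontal Hessian of $f$ vanishes.

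The substance of the argument is to pass from this Obata system to the identification of $M$. Here I would differentiate $f_{\alpha\beta}=0$, commute the covariant derivatives using the Tanaka-Webster structure equations (whose commutators carry both the torsion $A$ and the pseudohermitian curvature), and contract using the second Bianchi identity. Combined with the saturation of \eqref{condm} in the direction $\nabla f$ and, crucially, with $\nabla^*A=0$, this should force the torsion to vanish on $\{\nabla f\neq 0\}$, hence everywhere by continuity, so that $(M,\theta)$ is Sasakian. Once the structure is Sasakian the non-constant function $f$ still satisfies the vanishing traceless horizontal Hessian equation and all the standing hypotheses, so one concludes by the established Sasakian CR-Obata theorem (\cite{CC09a} for $n\ge 2$, \cite{CC09b} for $n=1$) that, after scaling $\theta$ by a positive constant, $M$ is the standard Sasakian CR sphere in $\mathbb{C}^{n+1}$; alternatively one may bypass the citation and reconstruct the sphere directly from the Obata equation via the associated gradient flow.

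The main obstacle is the torsion-elimination step: in the general case the derivatives of $f_{\alpha\beta}=0$ produce uncontrolled contributions in $A_{\alpha\beta}$, and it is only the divergence-free hypothesis that allows these to cancel or to be integrated away, which is why the theorem is restricted to $\nabla^*A=0$. A secondary technical point is propagating the vanishing of $A$ from the gradient direction, where the saturated inequality \eqref{condm} gives information, to all horizontal directions, and then from the open set $\{\nabla f\neq 0\}$ to all of $M$; the latter rests on the density of $\{\nabla f\neq 0\}$, which must be justified (a non-constant eigenfunction cannot be locally constant, since the horizontal distribution is bracket generating). For $n=1$ the term $c_n|f_{\alpha\beta}|^2$ degenerates and the non-negativity of the Paneitz operator has to be invoked both to give $\mathcal P(f)$ its sign in the identity above and to handle the CR-pluriharmonic terms, which is exactly the reason for the extra hypothesis in part (b).
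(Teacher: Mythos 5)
Your overall architecture is right: reduce to the Obata-type Hessian equation \eqref{e:hessian} by saturating the Lichnerowicz inequality, show the torsion vanishes, then invoke the Sasakian CR Obata theorem of \cite{CC09a,CC09b}. The first and last steps match the paper. But the middle step --- which is the entire new content of the theorem --- is not actually proved: you write that differentiating the Hessian equation, commuting derivatives and ``contracting using the second Bianchi identity \dots should force the torsion to vanish.'' This is an assertion, not an argument, and the indicated mechanism is not the one that works. The paper's route for $n\ge 2$ has two genuinely nontrivial ingredients that your sketch does not contain: (i) a pointwise algebraic identity, Lemma~\ref{l:R1 part}, obtained by comparing the curvature $R(X,Y,Z,\nabla f)$ computed from \eqref{e:hessian} against the general $(2,0)+(0,2)$ curvature decomposition \eqref{currrr}, which yields $|\nabla f|^{2}|A|^{2}=2|A\nabla f|^{2}$; and (ii) the observation (Lemma~\ref{c:xi f}) that when $\nabla^*A=0$ the vertical derivative $\xi f$ is again an extremal eigenfunction satisfying \eqref{e:hessian}, so that the derived formula $(\nabla _{X}A)(Y,\nabla f)=fA(X,Y)+df(\xi )A(X,JY)$ can be applied to both $f$ and $\xi f$; combining the two instances gives $A\nabla f=0$ a.e.\ (Lemma~\ref{l:ntor01}), and then (i) gives $A\equiv 0$. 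Without something equivalent to these two steps the proof does not close, and ``saturation of \eqref{condm} in the direction $\nabla f$'' is not used at all in the $n\ge2$ argument --- it enters only in the separate and quite different three-dimensional argument of Section~\ref{s:3d case}, which you do not address.

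Two smaller inaccuracies. First, you claim $\nabla^*A=0$ is needed to give the Bochner bracket a sign and to produce the coefficient $\tfrac{n+1}{n}$; this is wrong --- Theorem~\ref{main1} and the resulting Hessian equation \eqref{eq7} hold with no hypothesis on $\nabla^*A$ (the torsion terms in the Bochner identity are absorbed by Lemmas~\ref{gr2} and~\ref{gr3} unconditionally). The divergence-free hypothesis is used only in the rigidity step. Second, the density of $\{\nabla f\neq 0\}$ (more precisely, of $\{f^{2}+(\xi f)^{2}\neq 0\}$, which is what Lemma~\ref{l:ntor01} actually needs) is obtained in the paper from the elliptic equation $\triangle^{h}f=(2n+1)f$ of Corollary~\ref{c:riem-eigen-fn}, whose derivation itself requires first computing $\xi^{2}f=-f$ from the divergence-free condition; your appeal to bracket generation does not by itself deliver this.
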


The value of the scaling is determined, for example, by the fact that the
standard psudohermitian structure on the unit sphere has first eigenvalue
equal to $2n$. The corresponding eigenspace is spanned by the restrictions
of all linear functions to the sphere.

{Our approach is based on Lemma~\ref{l:hessian in extremal case}  where we find the explicit form of the Hessian with respect to the Tanaka-Webster connection of an extremal
eigenfunction $f$, i.e., an eigenfunction with eigenvalue $n/(n+1)k_0$, and the formula for the pseudohermitian curvature. As mentioned earlier a proof of Greenleaf's result based on the non-negativity of the Paneitz operator can be found in the Appendix. This proof shows formula  \eqref{eq7} for the horizontal Hessian of $f$, which after a rescaling can be put in the form }
\begin{equation*}
 \nabla ^{2}f(X,Y)=-fg(X,Y)-df(\xi )\omega (X,Y), \qquad X, Y \in H=Ker\, \theta.
\end{equation*}
{We prove Theorem~\ref{main2} as a consequence of Theorem~\ref{t:A vansihes if div} and Theorem~\ref{t:A vansihes if div 3D} taking into account the already established CR Obata theorem  for pseudohermitian manifold with a vanishing pseudohermitian torsion.  Thus, the key new results are Theorem \ref{t:A vansihes if div} and Theorem \ref{t:A vansihes if div 3D} which show correspondingly in the case $n\geq 2$ and $n=1$ that if the pseudohermitian torsion is divergence-free and we have an eigenfunction $f$ with a horizontal Hessian given with the above formula then the pseudohermitian  torsion vanishes, i.e., we have a Sasakian structure.}

{The local nature of the
analysis leading to the proof of Theorem \ref{main2} allows to prove our second main result, which is the following CR-version of Obata's Theorem \cite{O3}.}
\begin{thrm}\label{main11} Let $(M, \theta)$ be a  strictly pseudoconvex
pseudohermitian CR manifold of dimension $2n+1\ge 5$ with a
divergence-free  pseudohermitian torsion, $\bi^*A=0$. Assume, further, that $M$ is
complete with respect to the Rieemannian metric $h=g+\theta^2$. If
there is a smooth function $f\not\equiv 0$ whose Hessian with respect
to the Tanaka-Webster connection satisfies
\begin{equation}\label{e:hessian}
 \nabla ^{2}f(X,Y)=-fg(X,Y)-df(\xi )\omega (X,Y), \qquad X, Y \in H=Ker\, \theta,
\end{equation}
then up to a scaling of $\theta$ by a positive constant
$(M,\theta)$ is the standard (Sasakian) CR structure on the unit
sphere in $\mathbb{C}^{n+1}$.

{In dimension three
  the above result holds provided the pseudohermitian torsion vansihes, $A=0$.}
\end{thrm}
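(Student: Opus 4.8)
The plan is to reduce the statement to the already-known Obata-type result in the vanishing torsion (Sasakian) case, the reduction being achieved by showing that the hypotheses force $A\equiv 0$. For $n\ge 2$ I would invoke Theorem~\ref{t:A vansihes if div}, whose proof derives the vanishing of the pseudohermitian torsion from the divergence-free condition $\nabla^*A=0$ together with the horizontal Hessian identity \eqref{e:hessian}: one differentiates \eqref{e:hessian}, commutes covariant derivatives by means of the Ricci identities of the Tanaka--Webster connection to bring in the curvature and the torsion, and then contracts against $\nabla^*A=0$. The essential point to confirm is that this argument is \emph{pointwise}, i.e.\ every identity used is an algebraic-differential relation among $f$, its covariant derivatives, the curvature and $A$, and no integration over $M$ intervenes. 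Consequently the conclusion $A=0$ is valid on any connected $M$, whether or not it is compact, and this is exactly what permits the passage from the compact Theorem~\ref{main2} to the complete Theorem~\ref{main11}. For $n=1$ the vanishing of $A$ is part of the hypothesis, so in either case we are reduced to a Sasakian structure carrying a nonconstant $f$ subject to \eqref{e:hessian}.

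With $A=0$ the remaining task is a complete, possibly noncompact, Obata theorem in the Sasakian setting. I would first rewrite \eqref{e:hessian} in a local unitary frame $\{Z_\alpha\}$ of $H^{1,0}$. Decomposing the symmetric part $-fg$ and the antisymmetric part $-df(\xi)\omega$ into complex types yields the equivalent overdetermined system
\begin{equation*}
 f_{\alpha\beta}=0,\qquad f_{\alpha\bar\beta}=-\bigl(f+i\,\xi f\bigr)\,h_{\alpha\bar\beta},
\end{equation*}
where $h_{\alpha\bar\beta}$ is the Levi form. Tracing the second equation shows, without any compactness assumption, that $f$ is an eigenfunction of the sub-Laplacian with eigenvalue $2n$, while the first equation $f_{\alpha\beta}=0$ is the part whose integrability conditions carry the geometric content.

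Next I would exploit these integrability conditions. Applying the Ricci identities to the system and using $A=0$ relates $\xi f$ and $\xi\xi f$ to $f$; the expected outcome, matching the model, is $\xi\xi f=-f$ together with the vanishing of the mixed Reeb components of the Levi-Civita Hessian of the Webster metric $h=g+\theta^2$. Using the explicit difference tensor between the Tanaka--Webster connection $\nabla$ and the Levi-Civita connection of $h$ (which in the Sasakian case is built solely from $\omega$ and $J$), the antisymmetric term $-df(\xi)\omega$ is absorbed and one upgrades \eqref{e:hessian} to the full Riemannian Hessian identity $D^2 f=-f\,h$ on all of $TM$, that is, the classical Obata equation \eqref{clasob} for $(M,h)$. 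Since $(M,h)$ is complete and carries a nonconstant solution of \eqref{clasob}, Obata's theorem \cite{O3} yields an isometry of $(M,h)$ with the round sphere $S^{2n+1}(1)$; because the unit Reeb field is a Killing field for $h$ and the complex structure is recovered from $\nabla$, this isometry is a CR equivalence with the standard Sasakian sphere, in agreement with the vanishing torsion case of \cite{CC09a,CC09b}, and a homothety of $\theta$ fixes the normalization.

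I expect the main obstacle to be twofold. First, one must genuinely verify the \emph{locality} of the vanishing-torsion argument of Theorem~\ref{t:A vansihes if div}, replacing any integral identity used in the compact proof by the vanishing of its integrand. Second, and more delicate, is propagating the purely horizontal identity \eqref{e:hessian} to control the Reeb-direction components of the Hessian so as to obtain the full identity $D^2 f=-f\,h$: it is precisely here that the Sasakian structure equations, the dimension restriction $2n+1\ge 5$ (with $n=1$ requiring the extra hypothesis $A=0$), and completeness---rather than compactness---must be used to globalize the construction.
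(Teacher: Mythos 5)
Your proposal follows essentially the same route as the paper: the torsion is killed by the (pointwise) Theorem~\ref{t:A vansihes if div} for $n\ge 2$ (and assumed zero for $n=1$), the Reeb components of the Hessian are then pinned down to give $\nabla^2f(\xi,Y)=\nabla^2f(Y,\xi)=df(JY)$ and $\xi^2f=-f$, and the difference tensor between the Tanaka--Webster and Levi-Civita connections of $h=g+\theta^2$ upgrades \eqref{e:hessian} to the classical Obata equation $D^2f=-fh$, after which completeness and Obata's theorem finish the argument. Apart from your use of complex notation for the Sasakian step, this is the paper's proof.
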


We finish the introduction by recalling that Lichnerowicz' type results (not in sharp forms) in a
general sub-Riemannian setting were shown in \cite{Bau2}, \cite{Bau3} (these
two papers apply only to the vanishing pseudohermitian torsion CR case) and
\cite{Hla}. It will be interesting to consider whether the results of \cite{Bau2}, \cite%
{Bau3}, \cite{ChTW10} and \cite{ChW} can be extended to the non-Sasakian,
but divergence free pseudohermitian torsion case.

\begin{conv}
\label{conven} \hfill\break\vspace{-15pt}

\begin{enumerate}
\item[a)] We shall use $X,Y,Z,U$ to denote horizontal vector fields, i.e. $%
X,Y,Z,U\in H=Ker\, \theta$.

\item[b)] $\{e_1,\dots,e_{2n}\}$ denotes a local orthonormal basis of the
horizontal space $H$.

\item[c)] The summation convention over repeated vectors from the basis $%
\{e_1,\dots,e_{2n}\}$ will be used. For example, for a (0,4)-tensor $P$, the
formula $k=P(e_b,e_a,e_a,e_b)$ means
\begin{equation*}
k=\sum_{a,b=1}^{2n}P(e_b,e_a,e_a,e_b);
\end{equation*}
\end{enumerate}
\end{conv}

\textbf{Acknowledgments} The authors would like to thank Alexander Petkov for pointing to us that Lemma \ref{gr3} holds for all $n$ as a simple consequence of the Ricci identity thus simplifying our initial approach for the special case $n=1$. The research is partially supported by Contract
``Idei", DO 02-257/18.12.2008 and Contract ``Idei", DID 02-39/21.12.2009.
S.I is partially supported by Contract 181/2011 with the University of Sofia
`St.Kl.Ohridski'

\section{Pseudohermitian manifolds and the Tanaka-Webster connection}

In this section we will briefly review the basic notions of the
pseudohermitian geometry of a CR manifold. Also, we recall some results (in
their real form) from \cite{T,W,W1,L1}, see also \cite{DT,IVZ,IV2}, which we
will use in this paper.

A CR manifold is a smooth manifold $M$ of real dimension 2n+1, with a fixed
n-dimensional complex sub-bundle $\mathcal{H}$ of the complexified tangent
bundle $\mathbb{C}TM$ satisfying $\mathcal{H} \cap \overline{\mathcal{H}}=0$
and $[ \mathcal{H},\mathcal{H}]\subset \mathcal{H}$. If we let $H=Re\,
\mathcal{H}\oplus\overline{\mathcal{H}}$, the real sub-bundle $H$ is
equipped with a formally integrable almost complex structure $J$. We assume
that $M$ is oriented and there exists a globally defined compatible contact
form $\theta$ such that the \emph{horizontal space} is given by $${H}=Ker\,\theta.$$ In other words, the hermitian
bilinear form
\begin{equation*}
2g(X,Y)=-d\theta(JX,Y)
\end{equation*}
is non-degenerate. The CR structure is called strictly pseudoconvex if $g$
is a positive definite tensor on $H$. The vector field $\xi$ dual to $\theta$
with respect to $g $ satisfying $\xi\lrcorner d\theta=0$ is called the Reeb
vector field. The almost complex structure $J$ is formally integrable in the
sense that
\begin{equation*}
([JX,Y]+[X,JY])\in {H}
\end{equation*}
and the Nijenhuis tensor
\begin{equation*}
N^J(X,Y)=[JX,JY]-[X,Y]-J[JX,Y]-J[X,JY]=0.
\end{equation*}
A CR manifold $(M,\theta,g)$ with a fixed compatible contact form $\theta$
is called \emph{a pseudohermitian manifold}%
\index{pseudohermitian manifold}. In this case the 2-form
\begin{equation*}
d\theta_{|_{{H}}}:=2\omega
\end{equation*}
is called the fundamental form. Note that the contact form is determined up
to a conformal factor, i.e. $\bar\theta=\nu\theta$ for a positive smooth
function $\nu$, defines another pseudohermitian structure called
pseudo-conformal to the original one.

\subsection{Invariant decompositions}

As usual any endomorphism $\Psi$ of $H$ can be decomposed with respect to
the complex structure $J$ uniquely into its $U(n)$-invariant $(2,0)+(0,2)$
and $(1,1)$ parts. In short we will denote these components correspondingly
by $\Psi_{[-1]}$ and $\Psi_{[1]}$. Furthermore, we shall use the same
notation for the corresponding two tensor, $\Psi(X,Y)=g(\Psi X,Y)$.
Explicitly, $\Psi=\Psi=\Psi_{[1]}+\Psi_{[-1]}$, where
\begin{equation}
{\label{comp}} \Psi_{[1]}(X,Y)=%
\frac {1}{2}\left [ \Psi(X,Y)+\Psi(JX,JY)\right ], \qquad \Psi_{[-1]}(X,Y)=%
\frac {1}{2}\left [ \Psi(X,Y)-\Psi(JX,JY)\right ].
\end{equation}
The above notation is justified by the fact that the $(2,0)+(0,2)$ and $%
(1,1) $ components are the projections on the eigenspaces of the operator
\begin{equation*} % \label{e:cross}
\Upsilon =\ J\otimes J, \quad (\Upsilon \Psi) (X,Y)\overset{def}{=}\Psi
(JX,JY),
\end{equation*}
corresponding, respectively, to the eigenvalues $-1$ and $1$. Note that both
the metric $g$ and the 2-form $\omega$ belong to the [1]-component, since $%
g(X,Y)=g (JX,JY)$ and $\omega(X,Y)=\omega (JX,JY)$. Furthermore, the two
components are orthogonal to each other with respect to $g$.

\subsection{The Tanaka-Webster connection}

The Tanaka-Webster connection \cite{T,W,W1} is the unique linear connection $%
\nabla$ with torsion $T$ preserving a given pseudohermitian
structure, i.e., it has the properties
\begin{equation}  \label{torha}
\begin{aligned}& \nabla\xi=\nabla J=\nabla\theta=\nabla g=0,\\
& T(X,Y)=d\theta(X,Y)\xi=2\omega(X,Y)\xi, \quad T(\xi,X)\in {H},
\\ & g(T(\xi,X),Y)=g(T(\xi,Y),X)=-g(T(\xi,JX),JY). \end{aligned}
\end{equation}
Let $f$ be a smooth function on a pseudohermitian manifold $M$ with $\nabla
f $ its horizontal gradient, $g(\nabla f,X)=df(X)$.
The horizontal sub-Laplacian $\triangle f$ and the norm of the horizontal
gradient $\nabla f =df(e_a)e_a$ of a smooth function $f$ on $M$ are defined
respectively by
\begin{equation}  \label{lap}
\triangle f\ =-\ tr^g_H(\nabla df)\ =\nabla^*df= -\ \nabla df(e_a,e_a),
\qquad |\nabla f|^2\ =\ df(e_a)\,df(e_a).
\end{equation}
The function $f\not\equiv 0$ is an eigenfunction of the sub-Laplacian if
\begin{equation}  \label{eig}
\triangle f =\lambda f,
\end{equation}
where $\lambda$ is a (necessarily, non-negative,) constant.

It is well known that the endomorphism $T(\xi,.)$ is the obstruction a
pseudohermitian manifold to be Sasakian. The symmetric endomorphism $T_\xi:{H%
}\longrightarrow {H}$ is denoted by $A$, $A(X,Y):=T(\xi,X,Y)$,  and it is call \emph{the torsion of
the pseudohermitian manifold or pseudohermitian torsion.} The
pseudohermitian torsion $A$ is a completely trace-free tensor of type (2,0)+(0,2),
\begin{equation}  \label{tortrace}
A(e_a,e_a)=A(e_a,Je_a)=0, \quad A(X,Y)=A(Y,X)=-A(JX,JY).
\end{equation}
Let $R$ be the curvature of the Tanaka-Webster connection. The
pseudohermitian Ricci tensor $Ric$, the pseudohermitian scalar curvature $S$
and the pseudohermitian Ricci 2-form $\rho$ are defined by
\begin{equation*}
Ric(A,B)=R(e_a,A,B,e_a), \quad S=Ric(e_a,e_a),\quad
\rho(A,B)=\frac12R(A,B,e_a,Ie_a).
\end{equation*}
We summarize below the well known properties of the curvature $R$ of the
Tanaka-Webster connection \cite{W,W1,L1} using real expression, see also
\cite{DT,IVZ,IV2}.
\begin{equation*} % \label{curi}
R(X,Y,JZ,JV)=R(X,Y,Z,V)=-R(X,Y,V,Z), \qquad R(X,Y,Z,\xi)=0,
\end{equation*}
\begin{multline}  \label{currrr}
\frac12\Big[R(X,Y,Z,V)-R(JX,JY,Z,V)\Big] %
=-g(X,Z)A(Y,JV)-g(Y,V)A(X,JZ)+g(Y,Z)A(X,JV) \\
+g(X,V)A(Y,JZ)
-\omega(X,Z)A(Y,V)-\omega(Y,V)A(X,Z)+\omega(Y,Z)A(X,V)+\omega(X,V)A(Y,Z),
\end{multline}
\begin{equation}  \label{currr}
R(\xi,X,Y,Z)=(\nabla_YA)(Z,X)-(\nabla_ZA)(Y,X),\hskip1truein
\end{equation}
\begin{equation}  \label{torric}
\begin{aligned} & Ric(X,Y)=Ric(Y,X),\hskip2.5truein\\ &
Ric(X,Y)-Ric(JX,JY)=4(n-1)A(X,JY), \end{aligned}
\end{equation}
\begin{equation}  \label{rho}
2\rho(X,JY)=-Ric(X,Y)-Ric(JX,JY)=R(e_a,Je_a,X,JY),
\end{equation}
\begin{equation}  \label{div}
2(\nabla_{e_a}Ric)(e_a,X)= dS(X).\hskip2.5truein
\end{equation}
The equalities \eqref{torric} and \eqref{rho} imply
\begin{equation}  \label{rid}
Ric(X,Y)=\rho(JX,Y)+2(n-1)A(JX,Y),
\end{equation}
i.e. $\rho$ is the $(1,1)$-part of the pseudohermitian Ricci tensor  while
the $(2,0)+(0,2)$-part is given by the pseudohermitian torsion $A$.

\subsection{The Ricci identities for the Tanaka-Webster connection}

We shall use repeatedly the following Ricci identities of order two and
three for a smooth function $f$, see also \cite{IV2},
\begin{equation}  \label{e:ricci identities}
\begin{aligned} & \nabla^2f (X,Y)-\nabla^2f(Y,X)=-2\omega(X,Y)df(\xi)\\ &
\nabla^2f (X,\xi)-\nabla^2f(\xi,X)=A(X,\nabla f)\\ & \nabla^3 f
(X,Y,Z)-\nabla^3 f(Y,X,Z)=-R(X,Y,Z,\nabla f) - 2\omega(X,Y)\nabla^2f
(\xi,Z)\\ &\nabla ^{3}f(X,Y,Z)-\nabla ^{3}f(Z,Y,X)=-R(X,Y,Z,\nabla
f)-R(Y,Z,X,\nabla f)-2\omega (X,Y)\nabla ^{2}f(\xi ,\ Z)\\ &\hskip 1.8in
-2\omega (Y,Z)\nabla ^{2}f(\xi ,X) +2\omega (Z,X)\nabla ^{2}f(\xi ,Y)
+2\omega (Z,X)A(Y,\nabla f) \\ & \nabla^3 f(\xi,X,Y)-\nabla^3
f(X,\xi,Y)=(\nabla_{\gr}A)(Y,X)-(\nabla_YA)(\nabla f,X)-\nabla^2 f(AX,Y)\\
&\nabla^3 f(X,Y,\xi)-\nabla^ 3 f (\xi,X,Y)=\nabla^2f (AX,Y)+\nabla^2f (X,AY)
+(\nabla_X A)(Y,\nabla f)+(\nabla_Y A)(X,\nabla f)\\ &\hskip4.4in
-(\nabla_{\nabla f}) A( X,Y). \end{aligned}
\end{equation}
%\end{prop}
 We note that the above Ricci identities for the Tanaka-Webster connection follow from the general Ricci identities for a connection with torsion applying the properties of the pseudohermitian torsion listed in \eqref{torha} and the curvature identity \eqref{currr}.  For example,
\begin{multline*}
\nabla^3 f(\xi,X,Y)-\nabla^3 f(X,\xi,Y)=-R(\xi,X,Y,\nabla f)-\nabla^2
f(T(\xi,X),Y) \\
=R(X,\xi,Y,\nabla f)-A(X,e_a)\nabla^2 f(e_a,Y) =(\nabla_{\nabla
f}A)(Y,X)-(\nabla_YA)(\nabla f,X)-A(X,e_a)\nabla^2 f(e_a,Y),
\end{multline*}
where we used  \eqref{currr}.

An important consequence of  the first Ricci identity is the following fundamental formula
\begin{equation}  \label{xi1}
g(\nabla^2f,\omega)=\nabla^2f(e_a,Je_a)=-2n\,df(\xi).
\end{equation}
On the other hand, by \eqref{lap} the trace with respect to the metric is the negative sub-Laplacian
\[
g(\nabla^2f,g)=\nabla^2f(e_a,e_a)=-\triangle f.
\]

We also recall the horizontal divergence theorem \cite{T}. Let $(M,
g,\theta) $ be a pseudohermitian manifold of dimension $2n+1$. For a fixed
local 1-form $\theta$ the form
\begin{equation*} % \label{e:volumeform}
Vol_{\theta}=\theta\wedge\omega^{n}
\end{equation*}
is a globally defined volume form since $Vol_{\theta}$ is independent on the
local one form $\theta$.

We define the (horizontal) divergence of a horizontal vector
field/one-form $\sigma\in\Lambda^1\, (H)$ defined by
\begin{equation*}
\nabla^*\, \sigma\ =-tr|_{H}\nabla\sigma=\ -(\nabla_{e_a}\sigma)e_a.
\end{equation*}
The following Proposition, which allows "integration by parts", is well
known \cite{T}.

\begin{prop}
\label{div1} On a compact pseudohermitian manifold $M$ the following
divergence formula holds true
\begin{equation*}
\int_M (\nabla^*\sigma)Vol_{\theta}\ =\ 0.
\end{equation*}
\end{prop}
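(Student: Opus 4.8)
The plan is to recognize $\int_M (\nabla^*\sigma)\,Vol_{\theta}$ as a total divergence and to invoke Stokes' theorem on the closed manifold $M$. First I would identify the horizontal one-form $\sigma$ with the horizontal vector field $V=\sigma^\sharp$ via $g$, so that $\sigma(X)=g(V,X)$ and hence $\nabla^*\sigma=-(\nabla_{e_a}\sigma)(e_a)=-g(\nabla_{e_a}V,e_a)$. Because $V$ is horizontal and $\nabla\theta=0$, the Reeb direction contributes nothing to the full covariant divergence: $\theta(\nabla_\xi V)=\xi\big(\theta(V)\big)-(\nabla_\xi\theta)(V)=0$. Consequently $\nabla^*\sigma=-\operatorname{div}_\nabla V$, where $\operatorname{div}_\nabla V=g(\nabla_{e_a}V,e_a)+\theta(\nabla_\xi V)$ denotes the trace of the Tanaka-Webster covariant derivative over the full frame $\{e_1,\dots,e_{2n},\xi\}$. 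The problem thus reduces to proving $\int_M \operatorname{div}_\nabla V\,Vol_{\theta}=0$.

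The key step will be to compare $\operatorname{div}_\nabla V$ with the volume divergence $\operatorname{div}_{Vol_{\theta}}V$, defined by $\mathcal{L}_V\,Vol_{\theta}=(\operatorname{div}_{Vol_{\theta}}V)\,Vol_{\theta}$. Since $Vol_{\theta}$ is a top-degree form, $d\,Vol_{\theta}=0$, so Cartan's formula gives $\mathcal{L}_V\,Vol_{\theta}=d(\iota_V\,Vol_{\theta})$; integrating over the closed manifold and applying Stokes' theorem immediately yields $\int_M \operatorname{div}_{Vol_{\theta}}V\,Vol_{\theta}=\int_M d(\iota_V\,Vol_{\theta})=0$. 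It therefore suffices to show that the two divergences agree. Using that the Tanaka-Webster connection is volume-preserving, $\nabla\,Vol_{\theta}=0$ — a consequence of $\nabla\theta=\nabla g=\nabla J=0$, which forces $\nabla\omega=0$ — together with the identity $[V,E]=\nabla_V E-\nabla_E V-T(V,E)$, a routine frame computation produces
\begin{equation*}
\operatorname{div}_{Vol_{\theta}}V=\operatorname{div}_\nabla V+\operatorname{tr}\big(X\mapsto T(V,X)\big).
\end{equation*}

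The one genuinely CR-specific point — and the step I expect to require the most care — is to check that the torsion correction $\operatorname{tr}\big(X\mapsto T(V,X)\big)$ vanishes, since for a connection with torsion it need not. Evaluating the trace over $\{e_a,\xi\}$ with the explicit Tanaka-Webster torsion from \eqref{torha}, namely $T(V,e_a)=2\omega(V,e_a)\xi$ and $T(V,\xi)=-T(\xi,V)=-AV$, both contributions drop out: $g(T(V,e_a),e_a)=2\omega(V,e_a)\,g(\xi,e_a)=0$ because $\xi\perp H$, and $\theta(T(V,\xi))=-\theta(AV)=0$ because $AV\in H$. Hence $\operatorname{div}_{Vol_{\theta}}V=\operatorname{div}_\nabla V$, and stringing the displays together gives $\int_M(\nabla^*\sigma)\,Vol_{\theta}=-\int_M\operatorname{div}_\nabla V\,Vol_{\theta}=-\int_M\operatorname{div}_{Vol_{\theta}}V\,Vol_{\theta}=0$. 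A more computational alternative would avoid the Lie-derivative identity altogether by writing the $2n$-form $\iota_V\,Vol_{\theta}$ in the frame and differentiating it directly, but I prefer the route above because it isolates the role of the torsion most transparently, making clear that the vanishing of the trace of $T(V,\cdot)$ is exactly what makes the horizontal divergence theorem hold despite the presence of pseudohermitian torsion.
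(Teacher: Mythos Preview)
Your argument is correct: you correctly identify $\nabla^*\sigma$ with $-\operatorname{div}_\nabla V$, establish the identity $\operatorname{div}_{Vol_{\theta}}V=\operatorname{div}_\nabla V+\operatorname{tr}\big(X\mapsto T(V,X)\big)$ for a volume-preserving connection, and verify from \eqref{torha} that the torsion trace vanishes so that Stokes' theorem applies. Note, however, that the paper does not actually give its own proof of this proposition --- it is stated as well known with a reference to \cite{T} --- so there is no argument in the paper to compare yours against; you have simply supplied a complete proof where the paper defers to the literature.
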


\section{The hessian of an extremal function in the extremal case. }

Our goal is to determine the full Hessian of an "\textit{extremal
first eigenfunction}" which is an eigenfunction with the smallest possible in the sense of  Theorem \ref{main1} eigenvalue.

\begin{lemma}
\label{l:hessian in extremal case} Let $M$ be a compact strictly
pseudoconvex CR manifold of dimension $2n+1$, $n\geq 1$ satisfying
\begin{equation*}
Ric(X,X)+4A(X,JX)=\rho(JX,Y)+2(n+1)A(JX,Y)\geq k_0 \,g(X,Y)
\end{equation*}
while if $n=1$ assume, further, that the Paneitz operator is non-negative on $f$, i.e., \eqref{e:nonnegativeP} holds true.

If $\frac{n}{n+1}k_0$ is an eigenvalue of the sub-Laplacian, then the
corresponding eigenfunctions satisfy the identity
\begin{equation}  \label{eq7}
\nabla^2f(X,Y)=-\frac{k_0}{2(n+1)}fg(X,Y)-df(\xi)\omega(X,Y).
\end{equation}
\end{lemma}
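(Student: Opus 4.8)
The plan is to exploit the two natural traces of the Hessian $\nabla^2 f$ established in the excerpt, combined with the sharp Lichnerowicz-type inequality, to force equality in a Bochner-type integral identity and then read off the pointwise form \eqref{eq7}. First I would recall the two trace formulas: by \eqref{lap} the metric trace is $g(\nabla^2 f, g) = -\triangle f = -\lambda f$ with $\lambda = \frac{n}{n+1}k_0$, while by the fundamental formula \eqref{xi1} the trace against $\omega$ is $g(\nabla^2 f, \omega) = \nabla^2 f(e_a, Je_a) = -2n\, df(\xi)$. These tell me the $[1]$-component of $\nabla^2 f$ has the correct traces to match the right-hand side of \eqref{eq7}, since $g$ and $\omega$ both lie in the $[1]$-component and are $g$-orthogonal. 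Indeed, contracting the proposed identity against $g$ gives $-\frac{k_0}{2(n+1)}(2n) f = -\lambda f$ and against $\omega$ gives $-df(\xi)(2n)$, which are exactly the values just computed, so the formula is at least trace-consistent.

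\textbf{The main work: a Bochner argument forcing vanishing of the traceless part.} The heart of the proof should be an integrated Bochner (Weitzenb\"ock) formula for the sub-Laplacian applied to the eigenfunction $f$, of the type used by Greenleaf, Li-Luk and Chiu and proved in the Appendix here. The idea is to integrate $|\nabla^2 f - Q|^2$, where $Q$ denotes the tensor on the right-hand side of \eqref{eq7}, or equivalently to establish an integral identity whose integrand is a sum of manifestly non-negative terms:
\begin{equation*}
0 = \int_M \Big( \big|\nabla^2 f - Q\big|^2 + \big[ \, Ric(\nabla f,\nabla f) + 4A(\nabla f, J\nabla f) - k_0 |\nabla f|^2 \big] + (\text{Paneitz term}) \Big)\, Vol_\theta.
\end{equation*}
Here one decomposes $\nabla^2 f$ into its $[1]$ and $[-1]$ parts and separately its symmetric and skew parts; the skew part is governed by the first Ricci identity in \eqref{e:ricci identities}, which gives $\nabla^2 f(X,Y) - \nabla^2 f(Y,X) = -2\omega(X,Y)\,df(\xi)$, accounting precisely for the $-df(\xi)\omega(X,Y)$ term. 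To produce such an identity I would integrate the standard expression for $\int_M |\nabla^2 f|^2$, commute derivatives using the second- and third-order Ricci identities \eqref{e:ricci identities} to convert third derivatives into curvature contractions, and use the divergence theorem (Proposition \ref{div1}) to discard exact terms. The eigenvalue equation $\triangle f = \lambda f$ with the \emph{extremal} value $\lambda = \frac{n}{n+1}k_0$ is what makes the coefficients line up so that the curvature assumption \eqref{condm} appears with exactly the sharp constant, forcing the bracketed curvature term to be $\geq 0$ pointwise while its integral is $\leq 0$.

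\textbf{Conclusion and the expected obstacle.} Since the curvature hypothesis guarantees $Ric(\nabla f,\nabla f) + 4A(\nabla f,J\nabla f) - k_0|\nabla f|^2 \geq 0$ pointwise, and (when $n=1$) the Paneitz term is non-negative by the standing assumption \eqref{e:nonnegativeP}, every summand in the integrand is non-negative; as the total integral vanishes, each must vanish pointwise. In particular $|\nabla^2 f - Q|^2 \equiv 0$, which is exactly \eqref{eq7}. I expect the main obstacle to be the bookkeeping in the Bochner identity: correctly tracking the $[1]/[-1]$ decomposition and the torsion-correction terms so that the torsion contribution assembles into precisely $4A(\nabla f, J\nabla f)$ (hence the factor $2(n+1) = 2(n-1) + 4$ in the coefficient $\frac{k_0}{2(n+1)}$, which is where the torsion term $4A(X,JX)$ upgrades the pure Ricci constant $2(n-1)$ from \eqref{rid}), and so that in the case $n=1$ the leftover terms are exactly the Paneitz operator rather than some uncontrolled remainder. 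Verifying that the constant in front of $fg(X,Y)$ is $\frac{k_0}{2(n+1)}$ and not merely a trace-consistent guess requires carrying the sharp eigenvalue through the computation; this is the delicate step, and it is presumably where the non-negativity of the Paneitz operator is indispensable when $n=1$.
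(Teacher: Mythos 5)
Your proposal is correct and follows essentially the same route as the paper: the paper's proof simply invokes the equality case of the integrated Bochner inequality from the proof of Theorem~\ref{main1}, which forces $(\nabla^2f)_{[-1]}=0$ together with equality in the projection inequality \eqref{coshy3} --- precisely your condition $|\nabla^2 f - Q|^2=0$ split into its $[1]$/$[-1]$ pieces --- after which the coefficients are read off from the traces $g(\nabla^2f,g)=-\lambda f$ and $g(\nabla^2f,\omega)=-2n\,df(\xi)$ exactly as you describe. The only caveat is that the Paneitz term is non-negative only after integration, not pointwise, so the vanishing argument should be phrased as each of the finitely many non-negative \emph{integrals} vanishing, with pointwise vanishing then deduced for the manifestly non-negative integrands.
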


\begin{proof}
Under the assumptions of the Lemma,
inequality \eqref{e:obata ineq} becomes an equality. Therefore,
\begin{equation*}
(\nabla^2f)_{[-1]}=0.
\end{equation*}
In addition, we  must have equality in \eqref{coshy3} hence
\eqref{equality hessian} follows. Thus, the following identity holds true
\begin{equation*}
\nabla^2f(X,Y)= -\frac {1}{2n}(\triangle f)\cdot g (X,Y)+\frac {1}{2n}g
(\nabla^2f,\omega)\cdot\omega (X,Y).
\end{equation*}
Now, taking into account that $f$ is an extremal first eigenfunction we obtain
the coefficient in front of the metric. Finally, the skew-symmetric part of
the horizontal Hessian is determined by the first Ricci identity in
\eqref{e:ricci identities}.
\end{proof}

\begin{rmrk}
In addition to the above identities we have trivially from the proof of Theorem \ref{main1} the next equations
\begin{equation}  \label{eq14}
Ric(\nabla f,\nabla f)+4A(J\nabla f,\nabla f)=k_0|\nabla f|^2, \hskip.7in \int_M P_f(\gr)\vol=0.
\end{equation}
\end{rmrk}

Using a homothety we can reduce to the case $\lambda_1=2n$ and $k_0=2(n+1)$,
which are the values for the standard Sasakian round sphere. Henceforth, we
shall work under these assumptions. Thus, for  an extremal first eigenfunction $f$ (by definition $%
f\not\equiv 0$) and $n\geq 1$  we have the equalities
\begin{equation*} % \label{eq1}
\begin{aligned} \lambda = 2n, \qquad \triangle f=2n f, \qquad
\int_M(\triangle f)^2Vol_{\theta}=2n\int_M|\nabla f|^2Vol_{\theta}.
\end{aligned}
\end{equation*}
In addition, the horizontal Hessian of $f$ satisfies \eqref{eq7}, which with the assumed normalization takes the form given in equation \eqref{e:hessian}.

\subsection{The divergence-free torsion and the vertical derivative of an extremal function} Here we show one of our main observations that the vertical derivative of an extremal function is again an extremal function provided the pseudohermitian torsion is divergence-free. We begin with an identity satisfied by every extremal eigenfunction.
\begin{lemma}
\label{l:D3f bis} Let $M$ be a strictly pseudoconvex pseudohermitian CR
manifold of dimension $2n+1\geq 3$. If $f$ is an eigenfunction of the
sub-Laplacian satisfying \eqref{e:hessian}, then the following formula for
the third covariant derivative holds true
\begin{multline}
\nabla ^{3}f(X,Y,\xi )=-df(\xi )g(X,Y)-(\xi ^{2}f)\omega (X,Y)-2fA(X,Y)
\label{e:D3f extremal bis} \\
+(\nabla_X A)(Y,\nabla f)+(\nabla_Y A)(X,\nabla f)-(\nabla_{\nabla f}
A)(X,Y).
\end{multline}
\end{lemma}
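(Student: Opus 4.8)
The plan is to obtain \eqref{e:D3f extremal bis} by commuting the order of differentiation so that the vertical direction $\xi$ is differentiated \emph{first}, where the hypothesis \eqref{e:hessian} applies directly. Concretely, I would start from the last Ricci identity in \eqref{e:ricci identities},
\begin{equation*}
\nabla^3 f(X,Y,\xi)-\nabla^3 f (\xi,X,Y)=\nabla^2f (AX,Y)+\nabla^2f (X,AY) +(\nabla_X A)(Y,\nabla f)+(\nabla_Y A)(X,\nabla f) -(\nabla_{\nabla f} A)( X,Y),
\end{equation*}
which already produces the two torsion-derivative terms and the $-(\nabla_{\nabla f}A)(X,Y)$ term of \eqref{e:D3f extremal bis}. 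It then remains to evaluate the leading term $\nabla^3 f(\xi,X,Y)$ and the two Hessian terms $\nabla^2f(AX,Y)+\nabla^2f(X,AY)$.

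For the leading term, I would read $\nabla^3 f(\xi,X,Y)=(\nabla_\xi\nabla^2f)(X,Y)$ and differentiate \eqref{e:hessian} along $\xi$. Since the Tanaka-Webster connection satisfies $\nabla g=\nabla\omega=0$ and preserves the splitting $TM=H\oplus\mathbb{R}\xi$ by \eqref{torha} (so that $\nabla_\xi X,\nabla_\xi Y\in H$ and \eqref{e:hessian} may be used on each term of the Leibniz expansion), only the coefficients $-f$ and $-df(\xi)$ get differentiated. Using $\nabla_\xi f=df(\xi)$ and $\nabla_\xi\bigl(df(\xi)\bigr)=\xi^2 f$ this gives
\begin{equation*}
\nabla^3 f(\xi,X,Y)=-df(\xi)\,g(X,Y)-(\xi^2 f)\,\omega(X,Y),
\end{equation*}
which supplies the first two terms of \eqref{e:D3f extremal bis}.

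For the Hessian terms, note that $A$ maps $H$ to $H$, so \eqref{e:hessian} applies with the arguments $AX,AY$. Substituting and using $g(AX,Y)=g(X,AY)=A(X,Y)$ (symmetry of $A$) yields $-2fA(X,Y)$ from the metric parts, producing precisely the $-2fA(X,Y)$ term, while the $df(\xi)$-parts combine into $-df(\xi)\bigl(\omega(AX,Y)+\omega(X,AY)\bigr)$. Here is the one point requiring a short computation: because $A$ is of type $(2,0)+(0,2)$, i.e. $A(JX,JY)=-A(X,Y)$ by \eqref{tortrace}, the endomorphism $A$ anticommutes with $J$, $AJ+JA=0$; combined with $\omega(X,Y)=g(JX,Y)$ and the symmetry of $A$ this forces $\omega(AX,Y)+\omega(X,AY)=g\bigl((JA+AJ)X,Y\bigr)=0$, so these terms drop out. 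Assembling the three pieces reproduces \eqref{e:D3f extremal bis} exactly.

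I expect the only genuinely delicate points to be bookkeeping: fixing the convention that the first slot of $\nabla^3 f$ is the outermost differentiation (consistent with the derivation of the Ricci identities in the text) so that indeed $\nabla^3 f(\xi,X,Y)=(\nabla_\xi\nabla^2 f)(X,Y)$, and verifying that $\nabla_\xi$ preserves horizontality so that \eqref{e:hessian}, stated only for horizontal arguments, is legitimately applicable throughout the Leibniz expansion. No global or analytic input is needed; the statement is a pointwise algebraic consequence of \eqref{e:hessian} and the Ricci identities \eqref{e:ricci identities}.
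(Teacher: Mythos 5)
Your proposal is correct and follows essentially the same route as the paper: apply the last Ricci identity of \eqref{e:ricci identities}, evaluate $\nabla^3f(\xi,X,Y)$ by differentiating \eqref{e:hessian} along $\xi$, and reduce $\nabla^2f(AX,Y)+\nabla^2f(X,AY)$ to $-2fA(X,Y)$ (the paper cancels the $df(\xi)$-terms via $A(X,JY)=A(JX,Y)$, which is the same observation as your $AJ+JA=0$).
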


\begin{proof}
We start by substituting the horizontal Hessian of $f$ given in %
\eqref{e:hessian} in the forth Ricci identity of \eqref{e:ricci
identities}.
\begin{multline*}
\nabla ^{3}f(X,Y,\xi )=\nabla ^{3}f(\xi ,X,Y)+\nabla ^{2}f(AX,Y)+\nabla
^{2}f(X,AY)+(\nabla _{X}A)(Y,\nabla f)+(\nabla _{Y}A)(X,\nabla f) -(\nabla
_{\nabla f}A)(X,Y) \\
=\nabla ^{3}f(\xi ,X,Y)-fA(X,Y)+df(\xi )A(X,JY)-fA(X,Y)-df(\xi )A(JX,Y) \\
+(\nabla _{X}A)(Y,\nabla f)+(\nabla _{Y}A)(X,\nabla f)-(\nabla _{\nabla
f}A)(X,Y) \\
=\nabla ^{3}f(\xi ,X,Y)-2fA(X,Y)+(\nabla _{X}A)(Y,\nabla f)+(\nabla
_{Y}A)(X,\nabla f)-(\nabla _{\nabla f}A)(X,Y).
\end{multline*}
The first term, $\nabla ^{3}f(\xi ,X,Y),$ in the left-hand side is computed
by differentiating the formula for the horizontal Hessian, \eqref{e:hessian}. A substitution of
the thus obtained formula in the one above gives the desired \eqref{e:D3f extremal bis}
which completes the proof.
\end{proof}
With the help of  Lemma \ref{l:D3f bis} we turn to our main result in this sub-section.
\begin{lemma}\label{c:xi f}
Let $M$ be a strictly pseudoconvex pseudohermitian CR
manifold of dimension $2n+1\geq 3$. If the pseudohermitian torsion is
divergence-free with respect to the Tanaka-Webster connection, $(\nabla
_{e_{a}}A)(e_{a},X)=0$, and $f$ is an eigenfunction satisfying %
\eqref{e:hessian} then the function $\xi f$ is an eigenfunction with the
same eigenvalue,
\begin{equation}
\triangle (\xi f)=2n(\xi f).  \label{xii1}
\end{equation}%
{In particular, if $M$ is compact satisfying \eqref{condm} then the horizontal Hessian of $\xi f$ is given by}
\begin{equation}
\nabla ^{2}(\xi f)(X,Y)=\nabla ^{3}f(X,Y,\xi )=- df(\xi )g(X,Y)-(\xi^2
f)\omega (X,Y).  \label{xii4}
\end{equation}
\end{lemma}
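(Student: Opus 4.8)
The plan is to first establish the pointwise identity $\nabla^2(\xi f)(X,Y)=\nabla^3 f(X,Y,\xi)$ for horizontal $X,Y$, then obtain \eqref{xii1} by tracing the formula of Lemma~\ref{l:D3f bis}, and finally deduce \eqref{xii4} by feeding the newly found eigenfunction $\xi f$ back into Lemma~\ref{l:hessian in extremal case}.

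First I would record that, since the Tanaka-Webster connection satisfies $\nabla\xi=0$ by \eqref{torha}, the function $\xi f=df(\xi)$ has horizontal differential $d(\xi f)(X)=X\big(df(\xi)\big)=(\nabla_X df)(\xi)=\nabla^2 f(X,\xi)$, and consequently horizontal Hessian
\begin{equation*}
\nabla^2(\xi f)(X,Y)=X\big(\nabla^2 f(Y,\xi)\big)-\nabla^2 f(\nabla_X Y,\xi)=\nabla^3 f(X,Y,\xi),
\end{equation*}
the last equality again using $\nabla_X\xi=0$. In particular the first equality in \eqref{xii4} is immediate, and it remains only to evaluate $\nabla^3 f(X,Y,\xi)$.

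To prove \eqref{xii1} I would trace the identity \eqref{e:D3f extremal bis} of Lemma~\ref{l:D3f bis} over an orthonormal horizontal frame, i.e. set $X=Y=e_a$ and sum. Here $g(e_a,e_a)=2n$ and $\omega(e_a,e_a)=0$; the torsion is trace-free by \eqref{tortrace}, so $A(e_a,e_a)=0$ and, since $\nabla g=0$, also $(\nabla_{\nabla f}A)(e_a,e_a)=\nabla_{\nabla f}\big(A(e_a,e_a)\big)=0$; finally the two terms $(\nabla_{e_a}A)(e_a,\nabla f)$ vanish precisely by the divergence-free hypothesis. Hence $\nabla^3 f(e_a,e_a,\xi)=-2n\,df(\xi)$, and by \eqref{lap},
\begin{equation*}
\triangle(\xi f)=-\nabla^2(\xi f)(e_a,e_a)=-\nabla^3 f(e_a,e_a,\xi)=2n\,df(\xi)=2n(\xi f),
\end{equation*}
which is \eqref{xii1}.

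For the ``in particular'' statement \eqref{xii4} the decisive point is to bootstrap rather than to force the torsion terms in \eqref{e:D3f extremal bis} to cancel directly, which would be circular. Having just shown that $\xi f$ is itself an eigenfunction with the extremal eigenvalue $2n=\tfrac{n}{n+1}k_0$, and since $M$ is compact and satisfies \eqref{condm}, Lemma~\ref{l:hessian in extremal case} applies verbatim with $\xi f$ in place of $f$ and yields
\begin{equation*}
\nabla^2(\xi f)(X,Y)=-(\xi f)\,g(X,Y)-\big(\xi(\xi f)\big)\,\omega(X,Y)=-df(\xi)g(X,Y)-(\xi^2 f)\omega(X,Y),
\end{equation*}
which is exactly \eqref{xii4}. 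The main obstacle is thus conceptual rather than computational: one must resist trying to prove \eqref{xii4} from \eqref{e:D3f extremal bis} alone and instead exploit that $\xi f$ is an extremal eigenfunction. I note that when $n=1$ this last step inherits the standing Paneitz non-negativity requirement of Lemma~\ref{l:hessian in extremal case}, now imposed on $\xi f$; the case $\xi f\equiv 0$ is trivial since then both sides of \eqref{xii4} vanish.
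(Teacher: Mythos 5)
Your proof is correct and follows essentially the same route as the paper: the paper obtains \eqref{xii1} by tracing the last Ricci identity of \eqref{e:ricci identities} directly (using that $A$ is trace- and divergence-free and that $g(A,\nabla^2 f)=0$ by \eqref{e:hessian}), which is the same computation you perform by tracing the already-substituted formula \eqref{e:D3f extremal bis} of Lemma~\ref{l:D3f bis}, and the paper likewise deduces \eqref{xii4} by applying Lemma~\ref{l:hessian in extremal case} to the extremal eigenfunction $\xi f$. Your remark about the Paneitz non-negativity hypothesis on $\xi f$ when $n=1$ is a fair caveat that the paper only addresses implicitly (via the Appendix observation that the Paneitz operator is non-negative on functions satisfying \eqref{e:hessian} in the divergence-free case).
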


\begin{proof}
From the last Ricci identity in \eqref{e:ricci identities} we have
\begin{equation*}
\triangle (\xi f)-\xi(\triangle f)=\nabla ^{3}f(e_{a},e_{a},\xi )-\nabla ^{3}f(\xi ,e_{a},e_{a}) =2g(A,\nabla
^{2}f)-2(\nabla ^{\ast }A)(\nabla f)+\nabla A(\nabla f,e_{a},e_{a})=0,
\end{equation*}
using that the torsion is trace- and divergence- free, and the fact that $g(A,\nabla
^{2}f)=0 $ by \eqref{e:hessian}. Hence, \eqref{xii1} holds.

{The second part follows from the just proved \eqref{xii1} and Lemma~\ref{l:hessian in extremal case}.}
\end{proof}

\begin{rmrk}
\label{remn=1} In the compact case, the above lemma can also be seen with
the help of the following "vertical Bochner formula" valid for any smooth
function $f$
\begin{equation}  \label{e:vertical Bochner}
-\triangle (\xi f)^2 = 2|\nabla(\xi f)|^2-2df(\xi)\cdot \xi (\triangle f) +
4df(\xi)\cdot g(A,\nabla^2 f) -4df (\xi)(\nabla^*A)(\nabla f).
\end{equation}
However, the argument in Lemma~\ref{c:xi f} is purely
local.

To prove \eqref{e:vertical Bochner}  we use the last of the Ricci
identities \eqref{e:ricci identities} and the fact that the
torsion is trace free to obtain
\begin{eqnarray*}
-\frac{1}{2}\triangle (\xi f)^{2} &=&\nabla ^{3}f(e_{a},e_{a},\xi )df(\xi
)+\nabla ^{2}f(e_{a},\xi )\nabla ^{2}f(e_{a},\xi ) \\
&=&\left[ \nabla ^{3}f(\xi ,e_{a},e_{a})+2g(\nabla ^{2}f,A)-2(\nabla ^{\ast
}A)(\nabla f)\right] df(\xi )+|\nabla (\xi f)|^{2} \\
&=&|\nabla (\xi f)|^{2}-df(\xi )\cdot \xi (\triangle f)+2df(\xi )\cdot
g(A,\nabla ^{2}f)-2df(\xi )(\nabla ^{\ast }A)(\nabla f),
\end{eqnarray*}%
which completes the proof of \eqref{e:vertical Bochner}.
\end{rmrk}

\section{Vanishing of the pseudohermitian torsion in the extremal case for $n\ge 2$}

In this section we prove Theorem \ref{t:A vansihes if div} which is one of our main results valid in dimension at least five, i.e., we assume $n\ge 2$. The assumptions in this section, unless noted otherwise, are that $M$ is a strictly pseudoconvex pseudohermitian CR manifold of dimension at least five  and $f$  satisfies \eqref{e:hessian}.

\subsection{Curvature in the extremal case}

We start with a calculation of the curvature tensor.  This is achieved by using \eqref{e:hessian}, \eqref{rho} and the Ricci
identities \eqref{e:ricci identities}. After some standard
calculations we obtain the following formula
\begin{multline}\label{eqc1}
R(Z,X,Y,\nabla f)=\Big[df(Z)g(X,Y)-df(X)g(Z,Y)\Big]+\nabla df(\xi ,Z)\omega
(X,Y)   \\
-\nabla df(\xi ,X)\omega (Z,Y)-2\nabla df(\xi ,Y)\omega (Z,X)+A(Z,\nabla
f)\omega (X,Y)-A(X,\nabla f)\omega (Z,Y).
\end{multline}%
Taking the traces in \eqref{eqc1} we obtain using \eqref{currrr}
\begin{equation}\label{eqc02}
\begin{aligned} & Ric(Z,\nabla f)=(2n-1)df(Z)-A(JZ,\nabla f)-3\nabla
df(\xi,JZ)\\ & Ric(JZ,J\nabla f)=R(JZ,Je_a,e_a,\nabla
f)=df(Z)-(2n-1)A(JZ,\nabla f)-(2n+1)\nabla df(\xi,JZ). \end{aligned}
\end{equation}
We note that the above derivation of \eqref{eqc1} and \eqref{eqc02} holds also when $n=1$.
\subsection{The vertical parts of the Hessian in the extremal case}

Subtracting the equations in \eqref{eqc02} and using \eqref{torric} we
obtain
\begin{equation}
\nabla df(\xi ,JZ)=-df(Z)+A(JZ,\nabla f)  \label{eqc01}
\end{equation}%
after dividing by $n-1$ since $n>1$. Equation \eqref{eqc01} and the Ricci identity yield
\begin{equation}
\label{e:vhessian}
 \nabla ^{2}f(\xi ,Y)=df(JY)+A(Y,\nabla f),\qquad \nabla ^{2}f(Y,\xi
)=df(JY)+2A(Y,\nabla f).
\end{equation}
At this point we have not yet determined $\xi^2 f$, but this will be achieved in Lemma \ref{l:ntor1}.
\subsection{The relation between $A$ and $A\protect\nabla f$.}

\begin{lemma}
\label{l:R1 part} Let $M$ be a strictly pseudoconvex pseudohermitian CR
manifold of dimension $2n+1\geq 5$. If $f$ is a function satisfying %
\eqref{e:hessian}, then we have the following identity
\begin{equation}  \label{e:norms dfA vs Adf}
|\nabla f|^{2}|A|^{2}=2|A\nabla f|^{2}.
\end{equation}
\end{lemma}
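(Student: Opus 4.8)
The plan is to exploit the structural fact that the desired identity relates the \emph{full} squared norm $|A|^2$ to $|A\nabla f|^2$. Since every contraction of the curvature against $\nabla f$ alone produces expressions that are only \emph{linear} in $A$ (as in \eqref{eqc02}), the quadratic quantity $|A|^2$ can appear only if we contract the curvature tensor against the torsion $A$ itself. The natural vehicle is the fundamental $[-1]$-curvature identity \eqref{currrr}, whose right-hand side is built entirely from $A$ and will therefore manufacture the term $|A|^2$, while its left-hand side will be evaluated from the explicit form of the curvature already available in the extremal case.

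Concretely, I would first record the explicit curvature obtained by substituting the vertical Hessian \eqref{e:vhessian} into \eqref{eqc1}; this expresses $R(Z,X,Y,\nabla f)$ as a universal combination of $df$, $df\circ J$, $g$, $\omega$ and the one-form $A(\,\cdot\,,\nabla f)$. As a useful orientation, setting $Y=\nabla f$ and using the antisymmetry $R(Z,X,\nabla f,\nabla f)=0$ forces $A(\nabla f,\nabla f)=0$ and shows that $A\nabla f$ is a multiple of $J\nabla f$, say $A\nabla f=-cJ\nabla f$; hence $|A\nabla f|^2=c^2|\nabla f|^2$ and the claim is equivalent to $|A|^2=2c^2$, i.e.\ to $A$ being supported on $\mathrm{span}\{\nabla f,J\nabla f\}$. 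I will not strictly need this reformulation, but it clarifies what the lemma really asserts pointwise and why the curvature must enter.

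Next I would contract \eqref{currrr} with $V=\nabla f$ in the last slot and with $Y=e_a$, $Z=e_b$ in the two middle slots, multiplying by $A(e_a,e_b)$ and summing, keeping $X$ free. On the right-hand side almost all of the eight terms vanish after contraction because $A$ is trace-free, $A(e_a,e_a)=A(e_a,Je_a)=0$ by \eqref{tortrace}; using $\omega(U,W)=g(JU,W)$, $g(v,Jv)=0$, and the anticommutation $AJ=-JA$ (equivalently the $(2,0)+(0,2)$ property $A(JU,JW)=-A(U,W)$ of \eqref{tortrace}), the surviving contributions collapse to $df(JX)\,|A|^2-2\,g(JX,A^2\nabla f)$, with the $|A|^2$ coming precisely from the $\omega(X,V)A(Y,Z)$ term. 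On the left-hand side I would substitute the explicit curvature from the previous step to compute $A(e_a,e_b)R(X,e_a,e_b,\nabla f)$ and $A(e_a,e_b)R(JX,Je_a,e_b,\nabla f)$ separately; after using the same structural facts together with $A^2J=JA^2$, these two contractions turn out to be \emph{equal}, so that the left-hand side $\tfrac12 A(e_a,e_b)\big[R(X,e_a,e_b,\nabla f)-R(JX,Je_a,e_b,\nabla f)\big]$ vanishes identically.

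Equating the two sides then gives the pointwise relation $df(JX)\,|A|^2=2\,g(JX,A^2\nabla f)$ for every horizontal $X$, and specializing $X=-J\nabla f$ (so that $df(JX)=|\nabla f|^2$ and $g(JX,A^2\nabla f)=g(\nabla f,A^2\nabla f)=|A\nabla f|^2$, using that $A$ is $g$-symmetric and $J$ is a $g$-isometry) yields exactly $|\nabla f|^2|A|^2=2|A\nabla f|^2$. I expect the one genuine obstacle to be the verification that the left-hand side of the contracted identity vanishes: this is where the curvature symmetries and the $J$-anticommuting structure of the torsion must conspire, and although it is only bookkeeping it must be carried out carefully. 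It is also the only place the hypothesis $n\ge 2$ (dimension at least five) enters, through the vertical Hessian \eqref{e:vhessian}, whose derivation divides by $n-1$.
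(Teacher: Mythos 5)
Your proposal is correct and follows essentially the same route as the paper: both proofs rest on comparing the explicit extremal-case curvature \eqref{e:eqc1} (obtained from \eqref{eqc1} and \eqref{e:vhessian}) with the universal $[-1]$-part identity \eqref{currrr}, the only difference being the order of operations --- the paper first extracts the pointwise identity \eqref{e:R1 part 1}, sets $X=\nabla f$ to get $|\nabla f|^{2}A(Y,Z)=df(Y)A(\nabla f,Z)-df(JY)A(\nabla f,JZ)$, and only then contracts with $A$, whereas you contract with $A(e_a,e_b)$ first and specialize $X$ afterwards. Your contracted right-hand side $df(JX)|A|^2-2g(JX,A^2\nabla f)$ and the vanishing of the contracted left-hand side both check out, so the argument goes through.
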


\begin{proof}
From \eqref{eqc1} we have
\begin{multline}
R(X,Y,Z,\nabla f)=df(X)g(Y,Z)-df(Y)g(X,Z)+\omega (Y,Z)[df(JX)+A(X,\nabla f)]
\label{e:eqc1} \\
-\omega (X,Z)[df(JY)+A(Y,\nabla f)]-2\omega (X,Y)[df(JZ)+A(Z,\nabla
f)]+A(X,\nabla f)\omega (Y,Z)-A(Y,\nabla f)\omega (X,Z) \\
=df(X)g(Y,Z)-df(Y)g(X,Z)+df(JX)\omega (Y,Z)-df(JY)\omega (X,Z)-2df(JZ)\omega
(X,Y) \\
-2\omega (X,Y)A(Z,\nabla f)+2A(X,\nabla f)\omega (Y,Z)-2A(Y,\nabla f)\omega
(X,Z),
\end{multline}
therefore
\begin{multline} \label{e:currrr}
R(X,Y,Z,\nabla f)-R(JX,JY,Z,\nabla f) \\
=2\omega (Y,Z)A(X,\nabla f)-2\omega (X,Z)A(Y,\nabla f)+2g(Y,Z)A(JX,\nabla
f)-2g(X,Z)A(JY,\nabla f).
\end{multline}
On the other hand, from \eqref{currrr} we have
\begin{multline}  \label{e:currrr1}
R(X,Y,Z,\nabla f)-R(JX,JY,Z,\nabla f) \\
=-2g(X,Z)A(Y,J\nabla f)-2g(Y,\nabla f)A(X,JZ)+2g(Y,Z)A(X,J\nabla f) \\
+2g(X,\nabla f)A(Y,JZ)-2\omega (X,Z)A(Y,\nabla f)-2\omega (Y,\nabla
f)A(X,Z)+2\omega (Y,Z)A(X,\nabla f)+2\omega (X,\nabla f)A(Y,Z).
\end{multline}
Comparing equations \eqref{e:currrr}, \eqref{e:currrr1} and taking into
account the type of $A$, $A(JX,Y)=A(X,JY)$, we come to
\begin{multline}  \label{e:R1 part 1}
0 =-2g(Y,\nabla f)A(X,JZ)+2g(X,\nabla f)A(Y,JZ)-2\omega (Y,\nabla
f)A(X,Z)+2\omega (X,\nabla f)A(Y,Z) \\
=-2df(Y)A(X,JZ)+2df(X)A(Y,JZ)-2df(JY)A(X,Z)+2df(JX)A(Y,Z).
\end{multline}%
Taking $X=\nabla f$ in \eqref{e:R1 part 1} we obtain the identity
\begin{eqnarray*}
|\nabla f|^{2}A(Y,Z) &=&df(Y)A(\nabla f,Z)-df(JY)A(\nabla f,JZ)
\end{eqnarray*}%
which proves \eqref{e:norms dfA vs Adf}.
\end{proof}

\subsection{The vertical derivative of an extremal eigenfunction}

\begin{lemma}
\label{l:D3f} Let $M$ be a strictly pseudoconvex pseudohermitian CR manifold
of dimension $2n+1\geq 5$. If $f$ is an eigenfunction of the sub-Laplacian
satisfying \eqref{e:hessian}, then the following formula for the third
covariant derivative holds true
\begin{equation}  \label{e:D3f extremal}
\nabla ^{3}f(X,Y,\xi ) =-df(\xi )g(X,Y)+f\omega (X,Y)-2fA(X,Y)-2df(\xi
)A(JX,Y) +2(\nabla_X A)(Y,\nabla f).
\end{equation}
\end{lemma}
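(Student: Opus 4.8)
The plan is to obtain \eqref{e:D3f extremal} by covariantly differentiating the second of the vertical Hessian identities in \eqref{e:vhessian}, rather than re-running the Ricci-identity computation behind Lemma~\ref{l:D3f bis}. The key input available here but not in the $n=1$ setting is the formula $\nabla^2 f(Y,\xi)=df(JY)+2A(Y,\nabla f)$ for horizontal $Y$; this is exactly where the hypothesis $2n+1\ge 5$ enters, since it follows from \eqref{eqc01}, which was derived after dividing by $n-1$.

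Since the Tanaka-Webster connection satisfies $\nabla\xi=0$ and $\nabla J=0$ by \eqref{torha}, one has $\nabla^3 f(X,Y,\xi)=(\nabla_X\nabla^2 f)(Y,\xi)$ with the $\xi$-slot parallel. I would therefore differentiate the tensorial identity $\nabla^2 f(\cdot,\xi)=df(J\cdot)+2A(\cdot,\nabla f)$ in the horizontal direction $X$. The Leibniz rule together with $\nabla J=0$ gives
\begin{equation*}
\nabla^3 f(X,Y,\xi)=\nabla^2 f(X,JY)+2(\nabla_X A)(Y,\nabla f)+2A(Y,\nabla_X\nabla f),
\end{equation*}
so that the only remaining task is to evaluate the two algebraic terms $\nabla^2 f(X,JY)$ and $A(Y,\nabla_X\nabla f)$ by means of the horizontal Hessian \eqref{e:hessian}.

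For the first term, \eqref{e:hessian} and the relations $g(X,JY)=-\omega(X,Y)$ and $\omega(X,JY)=g(X,Y)$ give $\nabla^2 f(X,JY)=f\omega(X,Y)-df(\xi)g(X,Y)$. For the second, I would note that $\nabla_X\nabla f$ is the horizontal vector dual to the one-form $\nabla^2 f(X,\cdot)=-fg(X,\cdot)-df(\xi)\omega(X,\cdot)$, whence $\nabla_X\nabla f=-fX-df(\xi)JX$, and therefore $A(Y,\nabla_X\nabla f)=-fA(X,Y)-df(\xi)A(JX,Y)$ using the symmetry $A(X,Y)=A(Y,X)$. Substituting these two evaluations into the displayed formula and collecting terms yields precisely \eqref{e:D3f extremal}.

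I expect the only real care to be in the bookkeeping: fixing the convention $\nabla^3 f(X,Y,\xi)=(\nabla_X\nabla^2 f)(Y,\xi)$ so that one differentiates the identity with $\xi$ in the \emph{second} slot (the one carrying the coefficient $2$ on the torsion), and correctly passing between the Hessian as a bilinear form and $\nabla_X\nabla f$ as a vector via the metric. These sign and musical-isomorphism conventions are where errors would most easily arise, but no genuinely new input beyond \eqref{e:hessian}, \eqref{e:vhessian}, the parallelism $\nabla\xi=\nabla J=0$, and the trace-free symmetric type of $A$ is required.
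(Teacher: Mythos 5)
Your proposal is correct and is essentially identical to the paper's own proof: the paper also obtains \eqref{e:D3f extremal} by covariantly differentiating the second identity of \eqref{e:vhessian} (using $\nabla\xi=\nabla J=0$) to get $\nabla^3f(X,Y,\xi)=\nabla^2f(X,JY)+2(\nabla_XA)(Y,\nabla f)+2A(Y,\nabla_X\nabla f)$, and then substitutes \eqref{e:hessian}. Your evaluation of the two algebraic terms, including the sign conventions $g(X,JY)=-\omega(X,Y)$, $\omega(X,JY)=g(X,Y)$ and the identification $\nabla_X\nabla f=-fX-df(\xi)JX$, is accurate.
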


\begin{proof}
Differentiating the last identity in \eqref{e:vhessian} we obtain
\begin{equation*}
\nabla ^{3}f(X,Y,\xi )=\nabla ^{2}f(X,JY)+2(\nabla_X A)(Y,\nabla
f)+2A(Y,\nabla _{X}(\nabla f)).
\end{equation*}%
Now, invoking \eqref{e:hessian} gives the desired formula.
\end{proof}

\begin{lemma}
\label{l:xi2f}Let $M$ be a strictly pseudoconvex pseudohermitian CR manifold
of dimension $2n+1\geq 5$. If $f$ is an eigenfunction satisfying %
\eqref{e:hessian} then we have
\begin{equation}
\nabla ^{2}f(\xi ,\xi )=\xi ^{2}f=-f-\frac1n(\nabla_{e_a} A)(e_{a},J\nabla
f).  \label{e:xi2f}
\end{equation}
\end{lemma}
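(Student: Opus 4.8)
The plan is to evaluate the horizontal trace $\nabla^3 f(e_a,Je_a,\xi)$ in two different ways: one way isolates $\xi^2f$, while the other is explicit through Lemma~\ref{l:D3f}.

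First I would apply the fundamental formula \eqref{xi1} not to $f$ but to the function $\xi f$. Because the Tanaka-Webster connection preserves the Reeb field, $\nabla\xi=0$, one has the general identity $\nabla^2(\xi f)(X,Y)=\nabla^3f(X,Y,\xi)$ (this is the first equality in \eqref{xii4}, which is valid without any divergence-free or compactness hypothesis). Feeding $\xi f$ into \eqref{xi1} then gives
\begin{equation*}
\nabla^3f(e_a,Je_a,\xi)=\nabla^2(\xi f)(e_a,Je_a)=-2n\,\xi(\xi f)=-2n\,\xi^2f.
\end{equation*}

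Second, I would compute the same trace directly from \eqref{e:D3f extremal}, setting $X=e_a$, $Y=Je_a$ and summing. I expect almost every term to drop out: the term carrying $g(e_a,Je_a)$ vanishes because $J$ is $g$-skew; the torsion terms $A(e_a,Je_a)=0$ and $A(Je_a,Je_a)=-A(e_a,e_a)=0$ vanish by the trace-freeness and the type of $A$ recorded in \eqref{tortrace}; and $\omega(e_a,Je_a)=g(Je_a,Je_a)=2n$ (consistent with \eqref{xi1} applied to $f$ via \eqref{e:hessian}). What survives is
\begin{equation*}
\nabla^3f(e_a,Je_a,\xi)=2nf+2(\nabla_{e_a}A)(Je_a,\nabla f).
\end{equation*}
Equating the two expressions yields $\xi^2f=-f-\tfrac1n(\nabla_{e_a}A)(Je_a,\nabla f)$.

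To match the stated form \eqref{e:xi2f}, I would finally slide the complex structure across the two arguments of $A$: since $\nabla J=0$, the tensor $\nabla A$ remains of type $(2,0)+(0,2)$ in its last two slots, so $(\nabla_{e_a}A)(Je_a,\nabla f)=(\nabla_{e_a}A)(e_a,J\nabla f)$ for each fixed $a$, with the differentiation direction $e_a$ a spectator. Summing over $a$ gives \eqref{e:xi2f}. The computation is essentially bookkeeping; the only point that needs care is recognizing that the relevant trace is the one over the pair $(e_a,Je_a)$ — which produces the coefficient $\omega(e_a,Je_a)=2n$ and hence $\xi^2f$ — rather than over $(e_a,e_a)$, which would instead return $\triangle(\xi f)$. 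I do not anticipate a genuine obstacle here.
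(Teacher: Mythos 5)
Your proof is correct and follows essentially the paper's route: both arguments equate the $\omega$-trace (over the pairs $(e_a,Je_a)$) of the formula \eqref{e:D3f extremal} for $\nabla^3 f(X,Y,\xi)$ with $-2n\,\xi^2 f$, and then use $A(JX,Y)=A(X,JY)$ together with $\nabla J=0$ to put the surviving $\nabla A$-term in the stated form. The only (harmless) difference is that you obtain $\nabla^3 f(e_a,Je_a,\xi)=-2n\,\xi^2 f$ directly from \eqref{xi1} applied to the function $\xi f$, whereas the paper extracts the same identity by tracing its second formula \eqref{e:D3f extremal bis}, in which the three $\nabla A$-terms cancel under the $(e_a,Je_a)$-trace exactly as your shortcut predicts.
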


\begin{proof}
Comparing equations \eqref{e:D3f extremal} and \eqref{e:D3f extremal bis} we
obtain the identity
\begin{multline*}
-(\xi ^{2}f)\omega (X,Y)+(\nabla A)(X,Y,\nabla f)+(\nabla A)(Y,X,\nabla
f)-(\nabla A)(\nabla f,X,Y) \\
=f\omega (X,Y)-2df(\xi )A(JX,Y)+2\nabla A(X,Y,\nabla f).
\end{multline*}
Taking a trace we get \eqref{e:xi2f}.
%\begin{equation*}
%-2n(\xi ^{2}f)=2nf-2(\nabla^* A)(J\nabla f).
%\end{equation*}
\end{proof}

\begin{lemma}
\label{l:ntor1} Let $M$ be a strictly pseudoconvex pseudohermitian CR
manifold of dimension $2n+1\geq 5$. If the pseudohermitian torsion is
divergence-free, $(\nabla^* A)(X)=0$, and $f$ is an eigenfunction satisfying %
\eqref{e:hessian} then the next formulas hold true
\begin{equation}  \label{ntor1}
\nabla^2f(\xi,\xi)=\xi^2 f=-f,\qquad (\nabla_X A) (Y,\nabla
f)=fA(X,Y)+df(\xi)A(X,JY).
\end{equation}
\end{lemma}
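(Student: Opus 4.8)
The first identity is immediate. By \eqref{e:xi2f} one has $\xi^2 f=-f-\tfrac1n(\nabla_{e_a}A)(e_a,J\nabla f)$, and the hypothesis $(\nabla^*A)(W)=-(\nabla_{e_a}A)(e_a,W)=0$, applied with $W=J\nabla f$, annihilates the correction term; hence $\nabla^2f(\xi,\xi)=\xi^2f=-f$.

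For the torsion identity, abbreviate $T(X,Y):=(\nabla_XA)(Y,\nabla f)$, which is what must be computed. I would first extract all the information already contained in the two formulas for the third derivative. Equating \eqref{e:D3f extremal} and \eqref{e:D3f extremal bis}, inserting $\xi^2f=-f$ and using $A(JX,Y)=A(X,JY)$, every $g$-, $\omega$- and $fA$-term cancels and one is left with $T(X,Y)-T(Y,X)=2df(\xi)A(X,JY)-(\nabla_{\nabla f}A)(X,Y)$. To exploit this I would prove that $T$ is \emph{symmetric}: applying the first Ricci identity of \eqref{e:ricci identities} to the function $\xi f$ and using $\nabla\xi=0$ (so that $\nabla^2(\xi f)(X,Y)=\nabla^3f(X,Y,\xi)$) together with $d(\xi f)(\xi)=\xi^2f=-f$ gives $\nabla^3f(X,Y,\xi)-\nabla^3f(Y,X,\xi)=2f\,\omega(X,Y)$; comparing this with the antisymmetric part of \eqref{e:D3f extremal}, in which $g$, $A$ and $A(J\cdot,\cdot)$ are all symmetric and only $\omega$ contributes, forces $T(X,Y)=T(Y,X)$. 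Substituting back, the displayed relation sharpens to the clean identity $(\nabla_{\nabla f}A)(X,Y)=2df(\xi)A(X,JY)$.

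What remains is to determine $T$ itself, and this is the step I expect to be genuinely hard, because it cannot be supplied by the gradient alone. Indeed, combining the symmetry of $A$ with the curvature identity \eqref{currr} yields $T(X,Y)=(\nabla_XA)(\nabla f,Y)=(\nabla_{\nabla f}A)(X,Y)+R(\xi,Y,X,\nabla f)$, so by the previous paragraph the asserted formula is \emph{equivalent} to the Reeb-direction curvature identity $R(\xi,Y,X,\nabla f)=fA(X,Y)-df(\xi)A(X,JY)$; and every Ricci identity for $f$ merely re-expresses $R(\xi,\cdot,\cdot,\nabla f)$ back through $T$, so the horizontal curvature formula \eqref{eqc1}/\eqref{e:eqc1}, which does not see the $\xi$-component, leaves the computation circular. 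The way I would break the circle is to differentiate the \emph{algebraic} relation \eqref{e:R1 part 1} of Lemma \ref{l:R1 part} in a horizontal direction: since that relation involves no curvature, its covariant derivative is a linear equation in the components of $\nabla A$ and in $\nabla^2f$ only. Setting the free slot equal to $\nabla f$ (so that the various $(\nabla_WA)(\cdot,\nabla f)$ become values of $T$), substituting the horizontal Hessian \eqref{e:hessian}, the vertical Hessian \eqref{e:vhessian}, the divergence-free condition, and the identity $(\nabla_{\nabla f}A)(X,Y)=2df(\xi)A(X,JY)$ just obtained, and simplifying with $A(JX,Y)=A(X,JY)$, should pin down $T(X,Y)=fA(X,Y)+df(\xi)A(X,JY)$. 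An alternative I would try in parallel is to compute $R(\xi,Y,X,\nabla f)$ directly from the differential (second) Bianchi identity of the Tanaka-Webster connection, whose torsion term $T(X,Y)=2\omega(X,Y)\xi$ generates a cyclic contribution $2\omega(\cdot,\cdot)R(\xi,\cdot)$; a trace against $J$ produces a factor $4(n-1)$ (nonzero exactly in the standing range $2n+1\ge5$), letting one solve for $R(\xi,\cdot)$ in terms of horizontal derivatives of \eqref{eqc1}. The structural content is entirely in the reduction above and in the single identity \eqref{currr}; the remaining difficulty is purely the bookkeeping of the $[1]$/$[-1]$ parts in this last computation.
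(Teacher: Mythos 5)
Your first identity is proved exactly as in the paper (Lemma \ref{l:xi2f} plus the divergence-free hypothesis), and your preliminary reductions for the second identity are sound: comparing \eqref{e:D3f extremal} with \eqref{e:D3f extremal bis} does force $(\nabla_{\nabla f}A)(X,Y)=2df(\xi)A(X,JY)$ (the difference $T(X,Y)-T(Y,X)$ is antisymmetric while your right-hand side is symmetric in $X,Y$, so both vanish and the symmetry of $T$ comes for free), and the reformulation of the claim as $R(\xi,Y,X,\nabla f)=fA(X,Y)-df(\xi)A(X,JY)$ via \eqref{currr} is correct. The genuine gap is that the decisive step is never carried out, and the strategy you describe in detail does not close it. If you differentiate \eqref{e:R1 part 1}, substitute \eqref{e:hessian}, and specialize $Y=\nabla f$ and $JZ=\nabla f$, you obtain $|\nabla f|^{2}(\nabla_{W}A)(X,\nabla f)$ expressed through algebraic terms together with the two quantities $(\nabla_{W}A)(\nabla f,\nabla f)$ and $(\nabla_{W}A)(J\nabla f,\nabla f)$; the first is supplied by $(\nabla_{\nabla f}A)=2df(\xi)A(\cdot,J\cdot)$ and the symmetry of $T$, but the second is not determined --- putting $X=J\nabla f$ into the resulting relation makes the two occurrences of $(\nabla_{W}A)(J\nabla f,\nabla f)$ cancel and returns a tautology. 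So one degree of freedom in $T$ survives, and the Bianchi-identity alternative is only announced, not executed.

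The missing idea is the one the paper actually uses. By Lemma \ref{c:xi f} the function $\xi f$ is again an eigenfunction with the same extremal eigenvalue, so Lemma \ref{l:hessian in extremal case} applies to $\xi f$ and yields \eqref{xii4}: the horizontal Hessian $\nabla^{2}(\xi f)(X,Y)=\nabla^{3}f(X,Y,\xi)$ equals $-df(\xi)g(X,Y)-(\xi^{2}f)\omega(X,Y)$. In view of \eqref{e:D3f extremal}, the asserted torsion identity is essentially equivalent to \eqref{xii4}; in particular it requires the vanishing of the $[-1]$-part of $\nabla^{2}(\xi f)$, which is exactly the input your local algebra cannot reproduce, since it comes from the integral Bochner argument applied to the eigenfunction $\xi f$. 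Once \eqref{xii4} is substituted into \eqref{e:D3f extremal} one gets \eqref{ntor2} in one line, and replacing $Y$ by $JY$ and tracing, using that $A$ is both trace-free and divergence-free, delivers $\xi^{2}f=-f$ and $(\nabla_{X}A)(Y,\nabla f)=fA(X,Y)+df(\xi)A(X,JY)$ simultaneously.
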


\begin{proof}
The first part follows immediately from Lemma \ref{l:xi2f}. However, both
parts can be seen as follows. Using \eqref{xii4} in \eqref{e:D3f extremal}
we obtain the identity
\begin{equation}
(\nabla _{X}A)(Y,\nabla f)=fA(X,Y)+df(\xi )A(X,JY)-\frac{1}{2}\Big[\nabla
^{2}f(\xi ,\xi )+f\Big]\omega (X,Y).  \label{ntor2}
\end{equation}%
Equation \eqref{ntor2} yields
\begin{multline}
(\nabla _{X}A)(JY,\nabla f)=(\nabla _{X}A)(Y,J\nabla f)= fA(X,JY)-df(\xi
)A(X,Y)-\frac{1}{2}\Big[\nabla ^{2}f(\xi ,\xi )+f\Big]g(X,Y),  \label{ntor3}
\end{multline}%
where we used \eqref{tortrace}. Taking the trace of \eqref{ntor3} using the
fact that the pseudohermitian torsion is both divergence-free and trace-free
we obtain the proof of the lemma.
\end{proof}

\subsection{The elliptic eigenvalue problem.}
A consequence of the above Lemma \ref{l:ntor1} is the following fact, which
plays a crucial role in resolving Conjecture \ref{conj1} in the vanishing
torsion case by allowing the reduction to the Riemannian Obata theorem. Furthermore, the elliptic equation satisfied by an extremal eigenfunction shows that $|\nabla f|\not=0$, hence $df\not=0$, in a dense set since $f\not= const$.

\begin{cor}\label{c:riem-eigen-fn} Let $M$ be a strictly pseudoconvex pseudohermitian
CR manifold of dimension $2n+1\geq 5$. If the pseudohermitian torsion of $M$
is divergence-free, $(\nabla^* A)(X)=0$, and $f$ is an eigenfunction
satisfying \eqref{e:hessian}, then $f$ is an eigenfunction of the (positive)
Riemannian Laplacian $\triangle^h $ associated to  the Riemannian
metric
\begin{equation}  \label{e:riem extension}
h =  g + \theta^2
\end{equation}
on $M$,  satisfying
\begin{equation}  \label{e:riem-eigen-fn}
\triangle^h f =  ( 2n+1) f.
\end{equation}
\end{cor}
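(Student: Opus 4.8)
The plan is to reduce the statement to the two scalar facts $\triangle f=2nf$ and $\xi^2 f=-f$, which are already at our disposal, by first establishing the purely structural comparison
\begin{equation*}
\triangle^h f=\triangle f-\xi^2 f
\end{equation*}
between the positive Riemannian Laplacian of the Webster metric $h=g+\theta^2$ and the Tanaka--Webster data. To set this up I would work in the $h$-orthonormal frame $\{e_1,\dots,e_{2n},\xi\}$, which is legitimate since $h$ restricts to $g$ on $H$ and makes $\xi$ a unit vector orthogonal to $H$, and write $\triangle^h f=-\sum_a\nabla^h df(e_a,e_a)-\nabla^h df(\xi,\xi)$, where $\nabla^h$ denotes the Levi--Civita connection of $h$ and $\nabla^h df$ its Hessian.

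The key step is the comparison of $\nabla^h$ with the Tanaka--Webster connection $\nabla$. Because $\nabla g=\nabla\theta=\nabla\xi=0$, the connection $\nabla$ is also metric for $h=g+\theta^2$, so the difference $S=\nabla^h-\nabla$ is the contorsion tensor determined algebraically by the torsion $T$ of $\nabla$ through $2h(S(U,V),W)=-h(T(U,V),W)+h(T(U,W),V)+h(T(V,W),U)$. Inserting $T(X,Y)=2\omega(X,Y)\xi$ and $T(\xi,X)=AX$ for $X,Y\in H$, and using $\xi\perp H$ together with the trace-free property $A(e_a,e_a)=0$ of the pseudohermitian torsion, I would check that both traces $\sum_a S(e_a,e_a)$ and $S(\xi,\xi)$ vanish. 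Since $\nabla^h df(U,V)-\nabla^2f(U,V)=-df(S(U,V))$, this gives $\sum_a\nabla^h df(e_a,e_a)=\sum_a\nabla^2f(e_a,e_a)=-\triangle f$ and $\nabla^h df(\xi,\xi)=\nabla^2f(\xi,\xi)=\xi^2 f$, the latter also using $\nabla_\xi\xi=0$. Combining these yields the displayed comparison. I would stress that this step holds for every smooth function and uses only the trace-freeness of $A$, not the divergence-free hypothesis.

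Finally I would substitute the two known values. Tracing \eqref{e:hessian} over $H$ gives $\triangle f=-\nabla^2 f(e_a,e_a)=2nf$, since $g(e_a,e_a)=2n$ and $\omega(e_a,e_a)=0$; and Lemma~\ref{l:ntor1} (applicable because $2n+1\ge 5$ and $\nabla^* A=0$) supplies $\xi^2 f=-f$. Plugging these into $\triangle^h f=\triangle f-\xi^2 f$ gives $\triangle^h f=2nf+f=(2n+1)f$, which is \eqref{e:riem-eigen-fn}. The only genuinely non-trivial input is the vertical value $\xi^2 f=-f$, already isolated in Lemma~\ref{l:ntor1}, where the divergence-free condition is used; the remaining task is the bookkeeping in the connection comparison, and the main thing to be careful about is the vertical direction, which is nonetheless controlled entirely by the trace-freeness of $A$ and the unit length of $\xi$.
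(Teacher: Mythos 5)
Your proposal is correct and follows essentially the same route as the paper: both reduce the claim to the comparison $\triangle^h f=\triangle f-\xi^2 f$ by relating the Levi--Civita connection of $h$ to the Tanaka--Webster connection through the torsion (the paper via the Koszul-type identity \eqref{lcbi}, you via the equivalent contorsion tensor), observing that the relevant torsion traces vanish by trace-freeness of $A$, and then substituting $\triangle f=2nf$ from \eqref{e:hessian} and $\xi^2 f=-f$ from Lemma~\ref{l:ntor1}. The sign bookkeeping and the trace computations in your argument check out.
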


\begin{proof}
We denote by $D$ the Levi-Civita connection of $h$. With respect to the
local orthonormal basis $e_a,  \xi$, $a=1,\dots,2n$, for any
smooth function $f$ we have
\begin{equation*}
-\triangle^h f = h(D_{e_a} (Df),e_a)+ h(D_{\xi}(Df),\xi).
\end{equation*}
Since $g$ and $\theta$ are parallel for the Tanaka-Webster connection $%
\nabla $ it follows $\nabla h =0$, which allows us to find the relation
between the two connections:
\begin{equation}  \label{lcbi}
h(\nabla_AB,C)=h(D_AB,C)+\frac12\Big[ h(T(A,B),C)-h(T(B,C),A)+h(T(C,A),B)%
\Big], \qquad A,\, B,\, C\in \mathcal{T}(M).
\end{equation}
Therefore we have
\begin{equation*}
h(\nabla_{e_a}B,{e_a})=h(D_{e_a}B,{e_a}) -h(T(B,{e_a}),{e_a}), \qquad
h(\nabla_{\xi}B,{\xi})=h(D_{\xi}B,{\xi}) -h(T(B,{\xi}),{\xi}), \qquad B \in
\mathcal{T}(M).
\end{equation*}
With this relation in mind, taking into account the properties of the
torsion, the formula for the Laplacian reduces to
\begin{equation}\label{obsa}
-\triangle^h f =-\triangle f+ h(\nabla_{\xi}(Df),\xi)=-\triangle
f+ (\nabla^2 f)(\xi,\xi)=-\triangle f+ (\xi^2 f).
\end{equation}
In particular, if $f$ satisfies \eqref{e:hessian} then we have $\triangle
f=2nf$, while Lemma \ref{l:ntor1} gives $\xi^2 \, f=-f$, hence the claimed
identity.
\end{proof}

%In dimension three the analog of the above result is Lemma \ref{l:riem-eqn in 3D}.

\subsection{Vanishing of the pseudohermitian torsion in the case $n>1$.}

\begin{lemma}
\label{l:ntor01} Let $M$ be a strictly pseudoconvex pseudohermitian CR
manifold of dimension $2n+1\geq 5$. If the pseudohermitian torsion is
divergence-free, $(\nabla_{e_a}A)(e_a,X)=0$, and $f$ is an eigenfunction
satisfying \eqref{e:hessian} then
\begin{equation}  \label{tordf}
A\nabla f=0.
\end{equation}
\end{lemma}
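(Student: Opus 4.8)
The plan is to combine the structural identity from Lemma~\ref{l:R1 part}, namely $|\nabla f|^2|A|^2 = 2|A\nabla f|^2$, with the first-order information about $A$ along $\nabla f$ that is now available from the divergence-free hypothesis in Lemma~\ref{l:ntor1}. The goal is to show that $|A\nabla f|^2$ vanishes, after which Lemma~\ref{l:R1 part} forces $|\nabla f|^2|A|^2=0$, and since $|\nabla f|\neq 0$ on a dense set (as noted before Corollary~\ref{c:riem-eigen-fn}), we get $A=0$ there and hence everywhere by continuity; but in fact we only need the weaker pointwise conclusion $A\nabla f=0$, which is what the lemma asserts.

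The most direct route is to study the function $u=|A\nabla f|^2$ and to differentiate the key identity \eqref{ntor1}, $(\nabla_X A)(Y,\nabla f)=fA(X,Y)+df(\xi)A(X,JY)$, in order to control $\nabla(A\nabla f)$. Concretely, I would compute $\nabla_X(A\nabla f)$ by expanding $(\nabla_X A)(\nabla f,Y)+A(\nabla_X(\nabla f),Y)$: the first term is handled by \eqref{ntor1} (noting $A(X,\nabla f)=A(\nabla f, X)$ by symmetry of the torsion), while the second is handled by substituting the horizontal Hessian \eqref{e:hessian}, which gives $\nabla_X(\nabla f)=-fX-df(\xi)JX$ in the horizontal directions. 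This should produce a clean first-order expression for $A\nabla f$ that can be fed into a divergence or Bochner-type argument. Taking the horizontal divergence of $A\nabla f$ and using both trace-freeness and divergence-freeness of $A$, together with the identity $|\nabla f|^2|A|^2=2|A\nabla f|^2$, I expect the cross terms to collapse so that an integration by parts (via Proposition~\ref{div1}, in the compact case) or a pointwise maximum-principle argument yields $A\nabla f=0$.

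Alternatively, and perhaps more in keeping with the purely local and algebraic flavour of the preceding lemmas, one can try to extract $A\nabla f=0$ algebraically from \eqref{ntor1} by contracting it cleverly. Setting $Y=\nabla f$ in \eqref{ntor1} gives $(\nabla_X A)(\nabla f,\nabla f)=fA(X,\nabla f)+df(\xi)A(X,J\nabla f)$, and since the left side equals $\tfrac12 X\big(|A\nabla f|^2/\dots\big)$ only after care, a better contraction is to pair \eqref{ntor1} with $A$ itself. Contracting \eqref{ntor1} against $A(X,Y)$ and summing over an orthonormal frame, the left-hand side becomes $\tfrac12 (\nabla_{\nabla f}|A|^2)$-type terms while the right-hand side becomes $f|A|^2 + df(\xi)\,A(X,Y)A(X,JY)$; the last contraction vanishes because $A(X,JY)$ is the $[1]$-rotation of the $[-1]$-tensor $A$ and is therefore orthogonal to $A$. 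Comparing this with the derivative of the Lemma~\ref{l:R1 part} identity $|\nabla f|^2|A|^2=2|A\nabla f|^2$ should close the argument.

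The main obstacle I anticipate is bookkeeping the $df(\xi)$ and $\xi^2 f$ contributions correctly: the vertical data entered through \eqref{e:vhessian} and Lemma~\ref{l:ntor1}, and the symmetries $A(JX,Y)=A(X,JY)=-A(JX,JY)^{\,}$ from \eqref{tortrace} must be applied consistently to ensure the torsion-torsion contractions of mixed type genuinely vanish. The conceptual heart, however, is the interplay between the \emph{quadratic} constraint \eqref{e:norms dfA vs Adf} from curvature and the \emph{linear} first-order constraint \eqref{ntor1} from the divergence-free condition: one of these alone is insufficient, but together they over-determine $A\nabla f$ and force it to vanish. I expect the cleanest writeup to differentiate $|A\nabla f|^2$ directly using \eqref{ntor1} and \eqref{e:hessian}, then invoke \eqref{e:norms dfA vs Adf} to identify the result, concluding $A\nabla f=0$ pointwise.
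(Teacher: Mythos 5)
There is a genuine gap: neither of your two routes closes, and the paper's key idea — exploiting that $\xi f$ is a \emph{second} function satisfying \eqref{e:hessian} — is absent from your proposal. If you actually carry out your first route, you find that the two contributions to $\nabla_X(A\nabla f)$ cancel identically: by \eqref{ntor1}, $g\bigl((\nabla_XA)(\nabla f),Y\bigr)=fA(X,Y)+df(\xi)A(X,JY)$, while \eqref{e:hessian} gives $A(\nabla_X\nabla f,Y)=-fA(X,Y)-df(\xi)A(X,JY)$, so $\nabla_X(A\nabla f)=0$ for every horizontal $X$. Consequently $\nabla^*(A\nabla f)\equiv 0$ and $\triangle|A\nabla f|^2\equiv 0$ hold trivially, so the proposed integration by parts and Bochner/maximum-principle arguments return $0=0$ and give no information. (One could observe that $|A\nabla f|$ is therefore horizontally constant and, in the compact case, evaluate at a maximum of $f$ where $\nabla f=0$; but you do not propose this, and the lemma is local — it is used for the complete non-compact case of Theorem~\ref{main11}.) Your second route also fails at the step you flag yourself: contracting \eqref{ntor1} against $A$ produces $\sum_{a,b}(\nabla_{e_a}A)(e_b,\nabla f)A(e_a,e_b)=f|A|^2$ (the mixed contraction $\sum A(e_a,Je_b)A(e_a,e_b)=0$ is indeed correct), but the left-hand side is \emph{not} $\tfrac12\nabla_{\nabla f}|A|^2$ — the derivative index is contracted with a tensor slot rather than pointing along $\nabla f$ — and comparing with the differentiated identity of Lemma~\ref{l:R1 part} (whose right-hand side has zero derivative by the cancellation above) only yields transport equations such as $\nabla_{\nabla f}|A|^2=2f|A|^2$, not vanishing.

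The paper's proof instead uses Lemma~\ref{c:xi f}: $\xi f$ is again an eigenfunction satisfying \eqref{e:hessian}, so the second identity of \eqref{ntor1} applies to $\xi f$ as well, giving $(\nabla_XA)(Y,\nabla(\xi f))=(\xi f)A(X,Y)+(\xi^2f)A(X,JY)$. Since $\nabla(\xi f)=-J\nabla f+2A\nabla f$ by \eqref{e:vhessian}, subtracting the corresponding identity for $f$ (rotated by $J$) isolates $(\nabla_XA)(Y,A\nabla f)=0$. Setting $Y=\nabla f$, using the symmetry of $A$ and \eqref{ntor1} once more yields $fA(X,A\nabla f)+df(\xi)A(JX,A\nabla f)=0$; replacing $X$ by $JX$ and using that $f^2+(df(\xi))^2\neq 0$ almost everywhere (a consequence of the elliptic equation in Corollary~\ref{c:riem-eigen-fn}) forces $A(X,A\nabla f)=0$, hence $A\nabla f=0$. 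You correctly sensed that the quadratic constraint \eqref{e:norms dfA vs Adf} and the linear constraint \eqref{ntor1} must be played against each other, but the extra degree of freedom needed to over-determine $A\nabla f$ comes from applying the linear constraint to the new extremal function $\xi f$, not from differentiating or contracting the constraints for $f$ alone.
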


\begin{proof}
From Lemma \ref{c:xi f} it follows that $\xi f$ is also an extremal first
eigenfunction. Therefore, the second equation in \eqref{ntor1} of Lemma \ref%
{l:ntor1} applied to $\xi f$ gives
\begin{equation}
(\nabla _{X}A)(Y,\nabla (\xi f))=(\xi f)A(X,Y)+(\xi^2 f)A(X,JY).
\label{ntor4}
\end{equation}%
A substitution of the second equality of \eqref{e:vhessian} in \eqref{ntor4}
shows
\begin{equation*}
-(\nabla _{X}A)(Y,J\nabla f)+2(\nabla _{X}A)(Y,A\nabla f)=(\xi f)A(X,Y)+(\xi
^{2}f)A(X,JY).
\end{equation*}%
Noting that $A(Y,JX)=A(JY,X)$ by the last equality of \eqref{tortrace}, the
above identity together with Lemma \ref{l:ntor1} give
\begin{equation}
2(\nabla _{X}A)(Y,A\nabla f)=(\nabla _{X}A)(JY,\nabla f)+df(\xi
)A(X,Y)-fA(X,JY)=0.  \label{ntor7}
\end{equation}%
Therefore, using the symmetry of $A$ and \eqref{ntor1} of Lemma \ref{l:ntor1}
we have
\begin{equation}  \label{ntor8}
0=(\nabla _{X}A)(\nabla f,A\nabla f)=(\nabla _{X}A)(A\nabla f,\nabla
f)=fA(X,A\nabla f)+df(\xi )A(JX,A\nabla f).
\end{equation}%
Replacing $X$ with $JX$ in \eqref{ntor8} yields
\begin{equation}
0=fA(JX,A\nabla f)-df(\xi )A(X,A\nabla f).  \label{ntor9}
\end{equation}%
From Corollary \ref{c:riem-eigen-fn} it follows that $f$ is an eigenfunction
for a Riemannian Laplacian, hence it cannot vanish on an open set unless $%
f\equiv 0$. Since by assumption $f$ is non-trivial, equations \eqref{ntor8}
and \eqref{ntor9}, taking into account $f^{2}+(df(\xi ))^{2}\not=0$, a.e.,
imply
\begin{equation}
A(X,A\nabla f)=0,\quad \text{ i.e, }\quad A\nabla f=0.  \label{ntor10}
\end{equation}
\end{proof}

\begin{thrm}
\label{t:A vansihes if div} Let $M$ be a strictly pseudoconvex
pseudohermitian CR manifold of dimension $2n+1\geq 5$. If the
pseudohermitian torsion is divergence-free, $(\nabla_{e_a}A)(e_a,Z)=0$ and $%
f $ is an eigenfunction satisfying \eqref{e:hessian} then the
pseudohermitian torsion vanishes, $A=0$.
\end{thrm}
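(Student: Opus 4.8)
The plan is to derive the vanishing of $A$ by combining the two structural identities obtained above with an elliptic unique-continuation argument. First I would feed the conclusion $A\nabla f=0$ of Lemma~\ref{l:ntor01} into the norm identity of Lemma~\ref{l:R1 part}, namely $|\nabla f|^{2}|A|^{2}=2|A\nabla f|^{2}$. Since the right-hand side then vanishes identically, this yields the pointwise relation
\begin{equation*}
|\nabla f|^{2}\,|A|^{2}=0\qquad\text{on all of }M.
\end{equation*}
Thus at every point either $\nabla f=0$ or $A=0$, and it remains only to show that the first alternative occurs on a sufficiently small set.

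To control the zero set of $\nabla f$ I would appeal to Corollary~\ref{c:riem-eigen-fn}: an eigenfunction satisfying \eqref{e:hessian} is also an eigenfunction of the elliptic Riemannian Laplacian $\triangle^{h}$, with $\triangle^{h}f=(2n+1)f$. Because $f\not\equiv 0$ is non-constant, unique continuation for this second-order elliptic operator prevents $f$ from being locally constant, so the horizontal gradient $\nabla f$ is nonzero on an open dense subset $U\subseteq M$. On $U$ the factor $|\nabla f|^{2}$ is strictly positive, whence $|A|^{2}=0$, i.e.\ $A=0$, throughout $U$; since $A$ is a smooth tensor and $U$ is dense, continuity gives $A\equiv 0$ on $M$, which is the claim.

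The analytic heart of the matter is the density of $\{\nabla f\neq 0\}$, which is the only step not reducible to a pointwise algebraic computation. This is exactly where the reduction to the elliptic operator $\triangle^{h}$ in Corollary~\ref{c:riem-eigen-fn} is essential: the sub-Laplacian equation $\triangle f=2nf$ by itself does not rule out the vanishing of the \emph{horizontal} gradient on a large set, and it is only after passing to $\triangle^{h}$ and invoking Riemannian unique continuation that one can upgrade the pointwise dichotomy to the global conclusion $A=0$.
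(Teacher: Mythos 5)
Your proposal is correct and is essentially the paper's own proof: the authors likewise combine Lemma~\ref{l:ntor01} ($A\nabla f=0$) with Lemma~\ref{l:R1 part} ($|\nabla f|^{2}|A|^{2}=2|A\nabla f|^{2}$) and use the ellipticity from Corollary~\ref{c:riem-eigen-fn} to get $|\nabla f|\neq 0$ on a dense set. The only point worth tightening is the density step: non-constancy of $f$ alone does not rule out the \emph{horizontal} gradient vanishing on an open set, but if $\nabla f=0$ on an open set then \eqref{e:hessian} forces $f=0$ and $\xi f=0$ there, and unique continuation for $\triangle^{h}$ then gives $f\equiv 0$, a contradiction.
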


\begin{proof}
Since the pseudohermitian torsion is divergence-free we have Lemma~\ref%
{l:ntor01}. Now, Lemma \ref{l:R1 part} shows that $A=0$.
\end{proof}

\section{Vanishing of the pseudohermitian torsion in the extremal three dimensional case}\label{s:3d case}

In this section we prove our first main result in dimension three. We shall assume, unless explicitly stated otherwise, that $M$ is a  compact strictly pseudoconvex pseudohermitian CR manifold of dimension three for which \eqref{condm} holds and $f$ is a smooth function on $M$ satisfying \eqref{e:hessian}. In particular, we have done the normalization, if necessary, so that \eqref{condm} holds with $k_0=4$. Since the horizontal space is two dimensional we can use $\gr$, $J\gr$ as a basis at the points where $|\gr|\not=0$.  In fact, similarly to the higher dimensional case, we have $|\gr|\not=0$ almost everywhere. This follows from Lemma~\ref{l:riem-eqn in 3D} showing that $f$ satisfies a certain elliptic equation which implies that $f$ cannot vanish on any open set since otherwise $f\equiv 0$ which is a contradiction.

\subsection{The elliptic value problem in dimension three}

In dimension three we have the following result.

\begin{lemma}\label{l:riem-eqn in 3D}
Let $M$ be a strictly pseudoconvex pseudohermitian
CR manifold of dimension three and pseudohermitian scalar curvature $S$. If the pseudohermitian torsion of $M$ is divergence-free, $(\nabla^* A)(X)=0$, and
$f$ is an eigenfunction satisfying \eqref{e:hessian}, then $f$ satisfies the following elliptic equation
\begin{equation}  \label{e:riem-eqn in 3D}
\triangle^h f = \left ( 2+ \frac{S-2}{6} \right ) f - \frac{{1}}{12}g(\gr,\nabla S)
\end{equation}
involving the (positive) Riemannian Laplacian $\triangle^h $ associated to  the Riemannian metric
\begin{equation}  \label{e:riem extension2}
h= g + \theta^2.
\end{equation}
\end{lemma}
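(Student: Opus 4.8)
The plan is to derive the elliptic equation $\triangle^h f = \triangle f - \xi^2 f$ (which holds in all dimensions by the computation in \eqref{obsa} of Corollary~\ref{c:riem-eigen-fn}, since that step only used the properties of the torsion, not $n\geq 2$) and then to determine $\xi^2 f$ explicitly in the three-dimensional setting. From \eqref{obsa} we have
\begin{equation*}
-\triangle^h f = -\triangle f + \xi^2 f = -2nf + \xi^2 f = -2f + \xi^2 f,
\end{equation*}
using $n=1$ and $\triangle f = 2nf = 2f$. Thus the entire problem reduces to computing $\xi^2 f = \nabla^2 f(\xi,\xi)$ when $n=1$.

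First I would run the argument of Lemma~\ref{l:xi2f} in dimension three. That lemma compared \eqref{e:D3f extremal} with the general identity \eqref{e:D3f extremal bis} to extract $\xi^2 f$ via a trace; but \eqref{e:D3f extremal} was derived in the $n\geq 2$ case using \eqref{e:vhessian}, which in turn relied on dividing by $n-1$ in \eqref{eqc01}. Since this division is illegal when $n=1$, the hard part will be obtaining the analogue of the vertical-Hessian formulas \eqref{e:vhessian} without that step. In three dimensions, however, the horizontal space is two-dimensional and spanned by $\gr, J\gr$ where $|\gr|\neq 0$, and the torsion $A$ is a single traceless symmetric $(2,0)+(0,2)$ tensor, so one has far more algebraic rigidity; I would use the $n=1$ specializations of \eqref{torric} (which gives $Ric - \Upsilon Ric = 0$, so $Ric = \rho\circ J$) together with the curvature formula \eqref{eqc1} and the trace relations \eqref{eqc02}, all of which the text explicitly notes remain valid for $n=1$.

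The key computation is to replace the missing $\xi^2 f$ formula by bringing in the scalar curvature $S$. The Ricci identities \eqref{e:ricci identities} relate $\xi$-derivatives of $f$ to the curvature, and the divergence formula \eqref{div}, $2(\nabla_{e_a}Ric)(e_a,X) = dS(X)$, is exactly what couples $S$ to a first-order horizontal derivative; this is the source of the $g(\gr,\nabla S)$ term. Concretely, I would start from Lemma~\ref{l:D3f bis}, which holds whenever $f$ satisfies \eqref{e:hessian} and so is available for $n=1$, and trace the third-derivative identity \eqref{e:D3f extremal bis} over $(X,Y)$; using $\nabla^* A = 0$ and the trace-freeness \eqref{tortrace} of $A$, the torsion-derivative terms collapse, and \eqref{e:D3f extremal bis} yields an expression for $\triangle(\xi f) = \nabla^3 f(e_a,e_a,\xi)$. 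Matching this against what the Ricci identity gives for $\xi(\triangle f) = 2\xi f$ forces an identity for $\xi^2 f$ in terms of $f$, $S$, and the torsion; the scalar-curvature contribution enters precisely through \eqref{div} when the Ricci contraction is rewritten.

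The main obstacle I expect is that, unlike the $n\geq 2$ case where Lemma~\ref{l:ntor1} gave the clean value $\xi^2 f = -f$, in three dimensions the torsion terms need not vanish at this stage, so $\xi^2 f$ will genuinely carry an $S$-dependence and a divergence-of-torsion contribution; the care required is to show that, after imposing $\nabla^* A = 0$, all the remaining torsion-derivative terms organize into exactly $\tfrac{S-2}{6}f - \tfrac{1}{12}g(\gr,\nabla S)$ rather than some uncontrolled tensor. I would verify the numerical coefficients by specializing every trace to the two-dimensional horizontal space and using $Ric = \tfrac{S}{2}g$ on the $[1]$-part together with \eqref{rid} for $n=1$. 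Substituting the resulting $\xi^2 f$ into $\triangle^h f = 2f - \xi^2 f$ then yields \eqref{e:riem-eqn in 3D}.
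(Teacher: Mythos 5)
Your reduction $\triangle^h f=\triangle f-\xi^2f=2f-\xi^2f$ via \eqref{obsa} is correct, and you rightly identify that everything hinges on computing $\xi^2 f$ without the division by $n-1$; you also correctly name \eqref{eqc02}, \eqref{rid} and \eqref{div} as the place where $S$ and $dS(\nabla f)$ must come from. The gap is in the concrete computation you propose: tracing \eqref{e:D3f extremal bis} cannot determine $\xi^2 f$. The $g$-trace kills the $\omega(X,Y)$ term and, after $\nabla^*A=0$ and the trace-freeness of $A$, only reproduces $\triangle(\xi f)=2\xi f$, i.e.\ Lemma~\ref{c:xi f}, which carries no new information. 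The $\omega$-trace does pick up the coefficient $\xi^2 f$, but the left-hand side $\nabla^3f(e_a,Je_a,\xi)=\nabla^2(\xi f)(e_a,Je_a)$ equals $-2\xi^2 f$ by the very same identity \eqref{xi1} applied to the function $\xi f$, so the two occurrences of $\xi^2f$ cancel and one is left with a tautology (more precisely, with the vanishing of the torsion trace $(\nabla_{Je_a}A)(e_a,\nabla f)$). Moreover, \eqref{e:D3f extremal bis} is derived purely from the Ricci identities and \eqref{e:hessian}, with no use of the curvature tensor, so the pseudohermitian scalar curvature can never enter through any trace of it; ``matching against $\xi(\triangle f)=2\xi f$'' likewise involves no Ricci contraction that \eqref{div} could rewrite.

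The missing step is to bring in the curvature \emph{before} differentiating a third time. From the first equation of \eqref{eqc02} with $n=1$, together with $Ric=\rho(J\cdot,\cdot)$ from \eqref{rid} and the second Ricci identity, one obtains the first-order identity $3\nabla^2f(Z,\xi)=-df(JZ)+2A(Z,\nabla f)-\rho(Z,\nabla f)$. Differentiating \emph{this} identity, tracing against $\omega$ (which produces $6\xi^2 f$ on the left, again via \eqref{xi1} applied to $\xi f$), substituting \eqref{e:hessian} for the resulting horizontal Hessians, and using $\nabla^*A=0$ together with \eqref{div} to convert the divergence of $\rho$ into $\tfrac12 dS(\nabla f)$, yields $6\xi^2f=-(S-2)f+\tfrac12 g(\nabla f,\nabla S)$, which is exactly what \eqref{e:riem-eqn in 3D} requires. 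Without this detour through the curvature trace identity your outline does not close.
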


\begin{proof}
We start by proving the  identity
\begin{equation}\label{e:xi2f 3D}
6\xi ^{2}f=-(S-2)f+ \frac 12g(\gr,\nabla S).
\end{equation}%
On one hand from \eqref{xi1} applied to the function $\xi f$ we have
\begin{equation*}
\nabla ^{2}f(Je_{a},e_{a},\xi )=2\xi ^{2}f.
\end{equation*}%
On the other hand we use \eqref{eqc02} to find
\begin{equation*}
3\nabla ^{2}f(Z,\xi )=-df(JZ)+2A(Z,\nabla f)-\rho (Z,\nabla f)
\end{equation*}
taking into account \eqref{rid}.
Now, differentiating the above identity, then taking a trace, after which using the
formula for the horizontal Hessian \eqref{e:hessian} gives
\begin{multline*}
6\xi ^{2}f=3\nabla ^{2}f(Je_{a},e_{a},\xi )=\triangle f +2 (\nabla^* A)(J\nabla f) + 2g(\nabla^2 f, A)-(\nabla^* \rho) (J\gr)-\nabla^2 f( Je_a, \rho e_a)\\
= 2f-(\nabla^* \rho) (J\gr)-\left [ -f\rho(e_a,Je_a)+(\xi f)\rho(e_a,e_a)\right ]=2f-fS+ \frac 12 dS(\gr )
\end{multline*}%
after using \eqref{div} and \eqref{rid} for the last equality.
This proves \eqref{e:xi2f 3D}.

At this point we invoke the proof of  Corollary \ref{c:riem-eigen-fn}. In fact, a substituition of  the above found expression for $\xi ^{2}f$ into \eqref{obsa}, which holds also in dimension three, gives \eqref{e:riem-eqn in 3D}.
\end{proof}

\subsection{Vanishing of the pseudohermitian torsion in the case $n=1$.}\label{ss:3D}

 Since $\nabla f,J\nabla f$ is a basis (not an orthonormal one!) of the horizontal space almost everywhere the vanishing of the pseudohermitian torsion, $A=0$, is implied by $A(\gr,\gr)=A(J\gr,\gr)=0$. We turn to the main result of this section.

\begin{thrm}\label{t:A vansihes if div 3D}
Let $M$ be a compact strictly pseudoconvex
pseudohermitian CR manifold of dimension three for which the Lichnerowicz condition \eqref{condm} holds. If the
pseudohermitian torsion is divergence-free, $(\nabla_{e_a}A)(e_a,Z)=0$ and $%
f $ is an eigenfunction satisfying \eqref{e:hessian} then the
pseudohermitian torsion vanishes, $A=0$, and the pseudohermitian scalar curvature is a constant, $S=8$.
\end{thrm}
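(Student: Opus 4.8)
The plan is to work in dimension three under the standing assumptions: $M$ compact strictly pseudoconvex, the Lichnerowicz condition \eqref{condm} with $k_0=4$, divergence-free torsion $\nabla^*A=0$, and an eigenfunction $f$ satisfying \eqref{e:hessian}. The strategy mirrors the higher-dimensional argument but must compensate for the fact that the curvature identity \eqref{eqc01} was obtained by dividing by $n-1$, which is illegitimate when $n=1$. First I would record the consequences of Lemma~\ref{c:xi f}: since the torsion is divergence-free, $\xi f$ is again an extremal eigenfunction, so both $f$ and $\xi f$ satisfy \eqref{e:hessian}, and the vertical Hessian formula \eqref{xii4} holds. Using the compactness together with Lemma~\ref{l:riem-eqn in 3D}, $f$ (and likewise $\xi f$) satisfies the elliptic equation \eqref{e:riem-eqn in 3D}, from which unique continuation gives $|\nabla f|\neq 0$ on a dense open set; hence it suffices to prove $A(\nabla f,\nabla f)=A(J\nabla f,\nabla f)=0$ almost everywhere.

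Next I would integrate the vertical Bochner-type information. Applying Lemma~\ref{l:D3f bis} (valid for all $n\ge 1$) and comparing with the third-derivative formula obtained by differentiating \eqref{xii4} and \eqref{e:vhessian}-analogues should produce, in the $n=1$ setting, an identity expressing $(\nabla_X A)(Y,\nabla f)$ in terms of $fA$, $df(\xi)A(X,JY)$, and the as-yet-undetermined quantity $\xi^2 f$, whose trace is now pinned by \eqref{e:xi2f 3D} rather than by $\nabla^*A=0$ alone. The key is that in three dimensions the torsion has essentially one complex component, so $A$ is determined by $A(\nabla f,\nabla f)$ and $A(J\nabla f,\nabla f)$; I would write everything in the frame $\nabla f, J\nabla f$ and reduce the tensorial identities to scalar equations for the two functions $u:=A(\nabla f,\nabla f)$ and $v:=A(\nabla f,J\nabla f)$. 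I expect that differentiating these relations and re-using \eqref{e:hessian}, \eqref{xii4}, and the divergence-free condition yields a first-order system (a Bochner-type identity after one more trace) forcing $u=v=0$, exactly as Lemma~\ref{l:R1 part} did in higher dimensions via the norm identity $|\nabla f|^2|A|^2=2|A\nabla f|^2$.

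The main obstacle is that Lemma~\ref{l:R1 part} and the clean vertical Hessian \eqref{e:vhessian} both rely on the $n\ge 2$ curvature relation \eqref{eqc01}, so in dimension three I cannot simply invoke $A\nabla f=0$. Instead the scalar curvature $S$ enters through \eqref{e:xi2f 3D}, and the non-negativity of the Paneitz operator (built into the extremal hypothesis via Lemma~\ref{l:hessian in extremal case}) must be used to control the terms that were previously eliminated by the division by $n-1$. Concretely, I anticipate producing a global integral identity: multiply the appropriate local equation by a suitable factor, integrate over the compact $M$ using Proposition~\ref{div1}, and exploit the Paneitz non-negativity together with $\int_M P_f(\nabla f)\,\vol=0$ from \eqref{eq14} to conclude that an integrand that is manifestly a nonnegative multiple of $|A\nabla f|^2$ vanishes, whence $A\nabla f=0$ and therefore $A=0$ by the two-dimensionality of $H$. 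Finally, feeding $A=0$ back into \eqref{e:xi2f 3D} and the relation $\xi^2 f=-f$ (now forced as in Lemma~\ref{l:ntor1}) gives $(S-2)f=\tfrac12 g(\nabla f,\nabla S)$; since the Sasakian structure together with the eigenvalue normalization rigidifies the geometry, I would extract $dS=0$ and the value $S=8$ by tracing and using that $f$ is a non-constant eigenfunction, completing the proof.
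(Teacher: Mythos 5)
Your framing is correct as far as it goes: you identify the failure of \eqref{eqc01} for $n=1$, you reduce to showing $A(\nabla f,\nabla f)=A(J\nabla f,\nabla f)=0$ on the dense set where $\nabla f\neq0$, and you correctly flag that the extremality of $\xi f$ (Lemma~\ref{c:xi f}) and the frame $\{\nabla f,J\nabla f\}$ are the tools to use. But the core of the argument --- the step that actually forces the two torsion components to vanish --- is missing, and the mechanism you anticipate (an integral identity via Paneitz non-negativity producing a manifestly nonnegative multiple of $|A\nabla f|^2$) is not how the proof goes and there is no indication it could be made to work. The paper's argument is pointwise and algebraic: the equality case \eqref{eq14} applied to $f$ gives \eqref{n11}, i.e.\ $\tfrac{S}{2}|\nabla f|^2=4|\nabla f|^2-4A(J\nabla f,\nabla f)$; the trace identities \eqref{eqc02} (which, as noted, hold also for $n=1$) evaluated at $Z=\nabla f$ and $Z=J\nabla f$ give \eqref{n12} and \eqref{n13}, in particular $\nabla^2f(\nabla f,\xi)=\tfrac23A(\nabla f,\nabla f)$; then applying \eqref{eq14} to the second extremal eigenfunction $\xi f$ and expanding $\nabla(\xi f)$ in the frame $\{\nabla f,J\nabla f\}$ produces the identity \eqref{xi5}, which reduces to
\begin{equation*}
\nabla ^{2}f(J\nabla f,\xi )\, A(\nabla f,J\nabla f)\ +\ \tfrac{2}{3}\bigl(A(\nabla f,\nabla f)\bigr)^{2}\ =\ 0 .
\end{equation*}
The decisive observation is a sign analysis: the Lichnerowicz condition \eqref{condm} forces $S\geq 8$, hence $A(J\nabla f,\nabla f)\leq 0$ by \eqref{n=11x} and $\nabla^2f(J\nabla f,\xi)\leq 0$ by \eqref{n=12x}, so the first term above is a product of two nonpositive quantities and is therefore nonnegative; both terms must then vanish, giving $A=0$, and $S=8$ drops out of \eqref{n11}.

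Nothing in your proposal supplies this sign information, which is the genuinely new ingredient in dimension three. Your plan to ``multiply by a suitable factor, integrate, and exploit Paneitz non-negativity'' is not backed by any concrete identity, and the quantity that actually vanishes, $\nabla^2f(J\nabla f,\xi)A(\nabla f,J\nabla f)+\tfrac23 A(\nabla f,\nabla f)^2$, is not a multiple of $|A\nabla f|^2$; its nonnegativity is invisible without first establishing the two inequalities \eqref{n=11x} and \eqref{n=12x}. (A small additional inaccuracy: you invoke $\xi^2f=-f$ ``as in Lemma~\ref{l:ntor1},'' but that lemma is proved only for $n\geq2$; in dimension three this identity is derived only after $A=0$ is known. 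The constancy of $S$ also does not require your detour through \eqref{e:xi2f 3D}: it follows directly from \eqref{n11} once $A=0$.) As written, the proposal is a plausible research plan but not a proof.
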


\begin{proof}
Equation \eqref{eq14} yields
\begin{equation}
Ric(\nabla f,\nabla f)=\frac{S}{2}|\nabla f|^{2}=4|\nabla f|^{2}-4A(J\nabla
f,\nabla f).  \label{n11}
\end{equation}%
Setting $Z=\nabla f$  in \eqref{eqc02} and using \eqref{n11} we have
\begin{equation}
\nabla ^{2}f(\xi ,J\nabla f)=-|\nabla f|^{2}+A(J\nabla f,\nabla f)=-\frac{1%
}{4}Ric(\nabla f,\nabla f)=-\frac{S}{8}|\nabla f|^{2}.  \label{n12}
\end{equation}%
Taking $Z=J\nabla f$ in \eqref{eqc02} and using that in dimension three $%
Ric(X,JX)=0$ give
\begin{equation}
\nabla ^{2}f(\xi ,\nabla f)=-\frac{1}{3}A(\nabla f,\nabla f),\qquad \nabla
^{2}f(\nabla f,\xi )=\frac{2}{3}A(\nabla f,\nabla f).  \label{n13}
\end{equation}%
By Lemma \ref{c:xi f} we know that $\xi f$ is also an extremal eigenfunction, hence we have
\begin{equation}
Ric(\nabla (\xi f),\nabla (\xi f))=4|\nabla (\xi f)|^{2}-4A(\nabla (\xi
f),J\nabla (\xi f)).  \label{n=11xi}
\end{equation}%
Since $\nabla f$ and $J\nabla f$ are orthogonal we have
\begin{equation*}
|\nabla f|^{2}\nabla (\xi f)=\nabla ^{2}f(\nabla f,\xi )\nabla f+\nabla
^{2}f(J\nabla f,\xi )J\nabla f.
\end{equation*}%
Therefore, we have the following identities
\begin{equation}\label{xi2}
|\nabla f|^{4}|\, \nabla (\xi f)|^{2}\ =\ \Big[\ \Big(\nabla ^{2}f(\nabla f,\xi )\Big)%
^{2}\ +\ \Big(\nabla ^{2}f(J\nabla f,\xi )\Big)^{2}\ \Big]\, |\nabla f|^{2},
\end{equation}%
\begin{equation}\label{xi3}
|\nabla f|^{4}\,Ric(\nabla (\xi f),\nabla (\xi f))\ =\ \Big[\ \Big(\, \nabla
^{2}f(\nabla f,\xi )\,\Big)^{2}\ +\ \Big(\,\nabla ^{2}f(J\nabla f,\xi )\,\Big)^{2}\ \Big]%
\, Ric(\nabla f,\nabla f),
\end{equation}%
\begin{multline}
|\nabla f|^{4}\,A(\nabla (\xi f),J\nabla (\xi f)\ =\ \Big[\ \Big( \,\nabla ^{2}f(\nabla
f,\xi \,)\Big)^{2} \ -\ \Big(\, \nabla ^{2}f(J\nabla f,\xi )\, \Big)^{2}\ \Big]\,A(\nabla
f,J\nabla f)  \label{xi4} \\
-\ 2\Big (\, \nabla ^{2}f(\nabla f,\xi )\, \Big )\, \Big(\,\nabla ^{2}f(J\nabla f,\xi )\, \Big )A(\nabla f,\nabla f).
\end{multline}%
Substituting \eqref{xi4}, \eqref{xi3}, \eqref{xi2} in \eqref{n=11xi} and
using \eqref{n11} we obtain
\begin{equation}
|\nabla f|^{4}\left\{ \Big(\nabla ^{2}f(J\nabla f,\xi )\Big)^{2}A(\nabla
f,J\nabla f)\ + \ \nabla ^{2}f(\nabla f,\xi )\, \nabla ^{2}f(J\nabla f,\xi
)\,A(\nabla f,\nabla f)\right\}\ = \ 0.  \label{xi5}
\end{equation}%
Assumption \eqref{condm} implies  that in our case the pseudohermitian scalar curvature satisfies the inequality $S\geq 8$, hence \eqref{n11} yields
\begin{equation}
A(J\nabla f,\nabla f)=\big(1-\frac{S}{8}\big)|\nabla f|^{2}\leq 0.  \label{n=11x}
\end{equation}%
Equation \eqref{n12}, the Ricci identities and \eqref{n=11x} imply the inequality
\begin{equation}
\nabla ^{2}f(J\nabla f,\xi )=\nabla ^{2}f(\xi ,J\nabla f)+A(J\nabla f,\nabla
f)=A(J\nabla f,\nabla f)-\frac{S}{8}|\nabla f|^{2}\leq 0.  \label{n=12x}
\end{equation}%
Taking into account \eqref{n=12x} and \eqref{n13} we obtain from \eqref{xi5}
\begin{equation}
\nabla ^{2}f(J\nabla f,\xi )\, A(\nabla f,J\nabla f)\ +\ \frac{2}{3}\Big(A(\nabla
f,\nabla f)\Big)^{2}\ =\ 0.  \label{xi6}
\end{equation}%
The first term in \eqref{xi6} is nonnegative from
\eqref{n=11x} and \eqref{n=12x}. Therefore, we conclude
\begin{equation*}
A(\nabla f,J\nabla f)=A(\nabla f,\nabla f)=0,\quad i.e.\quad A=0.
\end{equation*}
The claim for the pseudohermitian scalar curvature follows for example from \eqref{n11}.
\end{proof}
An immediate corollary from Lemma \ref{l:riem-eqn in 3D} and Theorem \ref{t:A vansihes if div 3D} is the fact that on a strictly pseudoconvex pseudohermitian CR manifold of dimension three with a divergence-free pseudohermitian torsion every extremal eigenfunction is an eigenfunction of the Riemannian Laplacians \eqref{e:riem-eigen-fn}. In other words, Corollary \ref{c:riem-eigen-fn} is valid for $n\geq 1$.

\section{Proof of Theorem~\ref{main11}}
We prove Theorem~\ref{main11} by a reduction to the corresponding Riemannian  Obata theorem on a complete Riemannian manifold. In fact, we shall show that the Riemannian Hessian computed with respect to the Levi-Civita connection $D$ of the metric $h$ defined in \eqref{e:riem extension} satisfies \eqref{clasob}
and then apply the Obata theorem \cite{O3} to conclude that $(M,h)$ is isometric to the unit sphere.

For $n>1$ by Theorem \ref{t:A vansihes if div} it follows that $A=0$. Therefore,  \eqref{e:vhessian} and \eqref{ntor1} imply
\begin{equation}\label{e:vhessianc}
 \nabla ^{2}f(\xi ,Y)=\nabla ^{2}f(Y,\xi
)=df(JY), \qquad \xi^2f=-f.
\end{equation}

We show that \eqref{e:vhessianc} also holds in dimension three when the pseudohermitian torsion vanishes. In the three dimensional case we have $Ric(X,Y)=\frac{S}2g(X,Y)$. After a substitution of this  equality in \eqref{eqc02}, taking into account $A=0$, we obtain
\begin{equation}\label{hes3}
\bi^2f(\xi,Z)=\bi^2f(Z,\xi)=\frac{(S-2)}6df(JZ).
\end{equation}
Differentiating \eqref{hes3} and using \eqref{e:hessian} we find
\begin{equation}\label{hes31}
\bi^3f(Y,Z,\xi)=\frac16\Big[dS(Y)df(JZ)+(S-2)f\omega(Y,Z)-(S-2)df(\xi)g(Y,Z)\Big].
\end{equation}
On the other hand, setting $A=0$ in \eqref{e:D3f extremal bis}, we have
\begin{equation}\label{hes32}
\bi^3f(Y,Z,\xi)=-df(\xi)g(Y,Z)-(\xi^2f)\omega(Y,Z).
\end{equation}
In particular, the function $\xi f$ also satisfies \eqref{e:hessian}. Therefore using Lemma~\ref{l:riem-eqn in 3D} it follows that either $\xi f\equiv 0$ or $\xi f\not=0$ almost everywhere. In the first case it follows $\nabla f=0$ taking into account \eqref{hes3}, hence $f\equiv 0$, which is not possible by assumption. Thus, the second case holds, i.e., $\xi f\not=0$ almost everywhere.

Continuing our calculation, we note that  \eqref{hes31} and \eqref{hes32} give
\begin{equation}\label{hes33}
\frac{S-8}6df(\xi)g(Y,Z)-\Big(\xi^2f+\frac{S-2}6\Big)\omega(Y,Z)-\frac16dS(Y)df(JZ)=0,
\end{equation}
which implies
\begin{equation*}
\frac{S-8}3df(\xi)|\gr|^2=0.
\end{equation*}
Thus, the pseudohermitian scalar curvature is constant, $S=8$, invoking again Lemma~\ref{l:riem-eqn in 3D}. Equation  \eqref{hes33} reduces then to
\begin{equation}\label{hes34}
\Big(\xi^2f+\frac{S-2}6\Big)\omega(Y,Z)=0
\end{equation}
since $dS=0$. The equality \eqref{hes34} yields
$$\xi^2f=-f,$$
which together with \eqref{hes3} and $S=8$ imply the validity of \eqref{e:vhessianc} also in dimension three.

Finally, we use  \cite[Lemma~1.3]{DT} which gives the relation between $D$ and $\bi$. In the case $A=0$ we obtain
\begin{equation}\label{wh}
D_BC=\bi_BC+\theta(B)JC+\theta(C)JB-\omega(B,C)\xi,
\end{equation}
where $J$ is extended with $J\xi=0$. Notice that $\Omega$ in \cite{DT} is equal to $-\omega$. Using \eqref{wh} together with \eqref{e:hessian} and \eqref{e:vhessianc} we calculate that
\eqref{clasob} holds. The proof of Theorem~\ref{main11} is complete.

\section{Some examples}
In this short section we give some cases in which Theorem \ref{t:A vansihes if div} can be applied.
\begin{cor}
\label{t:A vansihes if vert} Let $M$ be a strictly pseudoconvex
pseudohermitian CR manifold of dimension $2n+1\geq 3$. If the vertical part
of the Ricci tensor of the Tanaka-Webster connection vanishes, $Ric(\xi,X)=0$%
, or the vertical part of the Ricci 2-form of the Tanaka-Webster connection
is zero, $\rho(\xi,X)=0$ and $f$ is an eigenfunction satisfying %
\eqref{e:hessian} then the pseudohermitian torsion vanishes, $A=0$.

In particular, if the vertical curvature of the Tanaka-Webster connection is
zero, and $f$ is an eigenfunction satisfying \eqref{e:hessian} then the
pseudohermitian torsion vanishes, $A=0$.
\end{cor}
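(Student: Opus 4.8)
The plan is to show that each of the two hypotheses is equivalent to the divergence-free condition $\nabla^*A=0$ already handled in Theorems~\ref{t:A vansihes if div} and \ref{t:A vansihes if div 3D}; the heart of the matter is that the vertical components of both the Ricci tensor and the Ricci $2$-form are, up to a sign and an application of $J$, precisely the horizontal divergence of the pseudohermitian torsion. The only tool needed is the curvature identity \eqref{currr}, $R(\xi,X,Y,Z)=(\nabla_YA)(Z,X)-(\nabla_ZA)(Y,X)$, together with the algebraic symmetries of $A$ in \eqref{tortrace} and $\nabla g=\nabla J=0$.

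First I would compute the vertical Ricci tensor. Writing $Ric(\xi,X)=R(e_a,\xi,X,e_a)=-R(\xi,e_a,X,e_a)$ and substituting \eqref{currr}, the trace-free property of $A$ annihilates the term $(\nabla_XA)(e_a,e_a)$, while the symmetry $A(X,Y)=A(Y,X)$ identifies the remaining term with the divergence; this gives $Ric(\xi,X)=-(\nabla^*A)(X)$. Thus $Ric(\xi,\cdot)=0$ is exactly $\nabla^*A=0$. The vertical Ricci $2$-form requires one extra manipulation: from $\rho(\xi,X)=\tfrac12R(\xi,X,e_a,Je_a)$ and \eqref{currr} one obtains $\tfrac12[(\nabla_{e_a}A)(Je_a,X)-(\nabla_{Je_a}A)(e_a,X)]$, and reindexing the orthonormal frame by the isometry $J$ (using $\nabla J=0$) shows the second sum equals the negative of the first, so $\rho(\xi,X)=(\nabla_{e_a}A)(Je_a,X)$. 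The type relation $A(JY,Z)=A(Y,JZ)$ then converts this into $-(\nabla^*A)(JX)$, whence $\rho(\xi,\cdot)=0$ is again equivalent to $\nabla^*A=0$ since $J$ is invertible.

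Once either hypothesis is turned into $\nabla^*A=0$, the conclusion $A=0$ is immediate from Theorem~\ref{t:A vansihes if div} for $n\ge2$ and from Theorem~\ref{t:A vansihes if div 3D} for $n=1$. The final ``in particular'' statement is then automatic: vanishing of the whole vertical curvature $R(\xi,\cdot,\cdot,\cdot)$ forces its trace $Ric(\xi,X)$ to vanish, reducing to the case already proved. I expect the only genuinely delicate point to be the frame reindexing in the computation of $\rho(\xi,X)$, where one must carefully exploit $\nabla J=0$ and the $(2,0)+(0,2)$ type of $A$ to recombine the two traces; all the other steps are routine contractions of \eqref{currr}.
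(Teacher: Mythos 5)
Your proposal is correct and follows essentially the same route as the paper's proof: both reduce each hypothesis to $\nabla^*A=0$ by contracting the identity \eqref{currr} and using that $A$ is trace-free, symmetric and of type $(2,0)+(0,2)$ (so that $Ric(\xi,X)=-(\nabla^*A)(X)$ and $\rho(\xi,X)=-(\nabla^*A)(JX)$), and then invoke the divergence-free vanishing theorems. You are in fact slightly more explicit than the paper in citing Theorem~\ref{t:A vansihes if div 3D} for $n=1$ (the paper quotes only Theorem~\ref{t:A vansihes if div}), though note that the three-dimensional theorem carries the extra hypotheses of compactness and \eqref{condm}, which are not listed in the corollary.
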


\begin{proof}
Suppose that either
\begin{equation*}
0=Ric(\xi,X)=R(\xi ,e_a,e_{a},X)=R(\xi ,X,e_{a},Z)=0
\end{equation*}
or
\begin{equation*}
0=\rho(\xi,X)=R(\xi ,X,e_{a},Je_a)=R(\xi ,X,e_{a},Z)=0.
\end{equation*}
In both cases \eqref{currr} yields
\begin{equation}
(\nabla _{e_{a}}A)(e_{a},X)=0, .  \label{vcurv2}
\end{equation}%
since the pseudohermitian torsion $A$ is trace-free, symmetric and of type
(0,2)+(2,0) with respect to $J$. Hence, the pseudohermitian torsion is
divergence-free and the proof follows from Theorem \ref{t:A vansihes if div}.
\end{proof}

The formula \eqref{currr} yields that the vertical curvature of the
Tanaka-Webster connection vanishes if and only if the pseudohermitian
torsion is a Codazzi tensor with respect to $\nabla$,
\begin{equation}
(\nabla _{Y}A)(Z,X)=(\nabla _{Z}A)(Y,X).  \label{vcurv1}
\end{equation}
Due to Corollary~\ref{t:A vansihes if vert}, interesting examples where
Theorem~\ref{main2} and Theorem~\ref{main11} can be applied are provided by pseudohermitian
structures for which the pseudohermitian torsion $A$ is a Codazzi tensor.
This includes, of course, the case of a parallel pseudohermitian torsion
(contact $(\kappa,\mu)$-spaces), $(\nabla_XA)(Y,Z)=0$, for horizontal $X,Y$
and $Z$, see \cite{BoCho} and \cite{Pe}.

\section{Appendix}

\subsection{Some differential operators invariantly associated to the
pseudohermitian structure}

The purpose of this section is to record, for self-sufficiency,  some of the results of \cite{L1} and \cite{GL88} using real variables. For a full
description and further references of conformally invariant operators see \cite{Gra83ab}%
, \cite{GoGr05} and, in particular, \cite{Hi93} for the three dimensional
case.

\begin{dfn}[\protect\cite{L1,GL88}]
\label{d:BPC def} a) Let $B(X,Y)$ be the $(1,1)$ component of the horizontal
Hessian $(\nabla ^{2}f)(X,Y),$
\begin{equation}  \label{e:Bdef}
B(X,Y)\equiv B[f](X,Y)=(\nabla ^{2}f)_{[1]}(X,Y)=\frac{1}{2}\left[ (\nabla
^{2}f)(X,Y)+(\nabla ^{2}f)(JX,JY)\right]
\end{equation}%
and then define $B_{0}(X,Y)$ to be the completely traceless part of $B,$%
\begin{equation}  \label{e:B0def}
B_{0}(X,Y)\equiv B_{0}[f](X,Y)=B(X,Y)+\frac{\triangle f}{2n} g(X,Y)-\frac{1}{2n}%
g(\nabla^2f,\omega)\,\omega (X,Y).
\end{equation}
b) Given a function $f$ we define the one form,
\begin{equation}  \label{e:Pdef}
P(X)\equiv P_{f}(X)=\nabla ^{3}f(X,e_{b},e_{b})+\nabla
^{3}f(JX,e_{b},Je_{b})+4nA(X,J\nabla f)
\end{equation}%
and also  a fourth order differential operator  (the so called CR-Paneitz operator in \cite{Chi06}),
\begin{equation}  \label{e:Cdef}
Cf=-\nabla ^{\ast }P=(\nabla_{e_a} P)({e_a})=\nabla ^{4}f(e_a,e_a,e_{b},e_{b})+\nabla
^{4}f(e_a,Je_a,e_{b},Je_{b})-4n\nabla^* A(J\nabla f)-4n\,g(\nabla^2 f,JA).
\end{equation}
\end{dfn}

\begin{rmrk}
In the three dimensional case, the non-negativity condition %
\eqref{e:nonnegativeP} means
\begin{equation*}
\int_M f\cdot Cf Vol_{\theta}=-\int_MP_f(\gr) Vol_{\theta} \geq 0.
\end{equation*}
Note that this condition is a CR invariant since it is independent of the
choice of the contact form. This follows from the conformal invariance of $C$
proven in \cite{Hi93}. In the vanishing torsion case we have, up to a
multiplicative constant, $C=\Box_b\bar\Box_b$, where $\Box_b$ is the Kohn
Laplacian.
\end{rmrk}

From \cite{L1}, see also \cite{BF74} and \cite{Be80}, it is known that if $%
n\geq 2$, a function $f\in \mathcal{C}^3(M)$ is CR-pluriharmonic, i.e,
locally it is the real part of a CR holomorphic function, if and only if $%
B_0[f]=0$. In fact, as shown in \cite{GL88} with the help of the next
{Lemma} only one fourth-order equation $Cf=0$ suffices for $B_0[f]=0$ to
hold. Furthermore, the Paneitz operator is non-negative. When $n=1$ the
situation is more delicate. In the three dimensional case, CR-pluriharmonic
functions are characterized by the kernel of the third order operator $%
P[f]=0 $ \cite{L1}. However, the single equation $Cf=0$ is enough again
assuming the vanishing of the Webster torsion \cite{GL88}, see also \cite%
{Gra83ab}. On the other hand, \cite{CCC07} showed that if the torsion
vanishes the Paneitz operator is essentially positive, i.e., there is a
constant $\Lambda>0$ such that
\begin{equation*}
\int_M f\cdot Cf Vol_{\theta} \geq \Lambda\int_M f^2 Vol_{\theta} .
\end{equation*}
for all real smooth functions $f\in (Ker\, C)^{\perp}$, i.e., $\int_M f\cdot
\phi Vol_{\theta} =0$ if $C\phi=0$. In addition, the non-negativity of the
CR Paneitz operator is relevant in the embedding problem for a three
dimensional strictly pseudoconvex CR manifold. In the Sasakian case, it is
known  that $M$ is embeddable, \cite{Le92}, and the CR
Paneitz operator is nonnegative, see \cite{Chi06}, \cite{CCC07}. Furthermore, \cite{ChChY10} showed that if the pseudohermitian scalar curvature   of $M$ is positive and $C$ is non-negative, then $M$ is embeddable in some $\mathbb{C}^n$.

\begin{rmrk}\label{r:non-negative paneitz}
In the quaternionic contact setting the paper \cite{IMV} describes
differential operators and conformally invariant two forms whose kernel
contains the real parts of all anti-CRF functions. In particular, there is a
partial result characterizing real parts of anti-CRF functions. The "extra"
assumption concerns an analog of the $\partial\bar\partial$ lemma in the
quaternionic setting.
\end{rmrk}

We turn to one of the basic results relating the above defined operators.
\begin{lemma}[\protect\cite{GL88}]\label{l:GrLee}
On a he following identities holds true,
\begin{eqnarray*}
(\nabla_{e_{a}} B_{0})(e_{a},X) &=&\frac{n-1}{2n}P(X) \\
\int_M |B_0|^2\vol&=&-\frac{n-1}{2n}\int_M P(\gr)\vol.
\end{eqnarray*}
In particular, if $n>1$ the Paneitz operator is non-negative.
\end{lemma}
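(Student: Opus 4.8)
The plan is to establish the pointwise identity for the horizontal divergence of $B_0$ first, and then to obtain the integral formula and the non-negativity of the Paneitz operator as formal consequences of the divergence theorem, Proposition~\ref{div1}. Throughout, $f$ is an arbitrary smooth function; no extremality is used.

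For the pointwise identity I would expand $(\nabla_{e_a}B_0)(e_a,X)$ directly from \eqref{e:Bdef}--\eqref{e:B0def}. Since $g$, $J$, $\omega$ and $\xi$ are $\nabla$-parallel, only $\nabla^2f$ gets differentiated and, using $g(\nabla^2f,\omega)=-2n\,df(\xi)$ from \eqref{xi1}, the expansion reduces to
\begin{equation*}
(\nabla_{e_a}B_0)(e_a,X)=\tfrac12\big[\nabla^3f(e_a,e_a,X)+\nabla^3f(e_a,Je_a,JX)\big]+\tfrac1{2n}d(\triangle f)(X)-\nabla^2f(JX,\xi),
\end{equation*}
the last two terms arising from the trace-restoring parts of $B_0$. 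In the same way I would record that \eqref{e:Pdef} simplifies to $P(X)=-d(\triangle f)(X)-2n\,\nabla^2f(JX,\xi)+4nA(X,J\nabla f)$, because $\nabla^3f(X,e_b,e_b)=-d(\triangle f)(X)$ and $\nabla^3f(JX,e_b,Je_b)=-2n\,\nabla^2f(JX,\xi)$.

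The core step is to move the free slot $X$, respectively $JX$, into the differentiation position so that the two third-order traces can be matched against $P$. For this I would apply the slot-$1\leftrightarrow$slot-$3$ Ricci identity (the fourth line of \eqref{e:ricci identities}) to the triples $(e_a,e_a,X)$ and $(e_a,Je_a,JX)$. Tracing the resulting curvature terms with \eqref{rho} and the definition of $Ric$, then splitting $Ric$ through \eqref{rid}, and converting the vertical-Hessian and torsion terms with the second Ricci identity $\nabla^2f(X,\xi)-\nabla^2f(\xi,X)=A(X,\nabla f)$ and the type relation $A(JX,Y)=A(X,JY)$, I expect the $\rho$-contributions to cancel and every surviving term to pick up a common factor $(n-1)$, leading to
\begin{equation*}
(\nabla_{e_a}B_0)(e_a,X)=(n-1)\Big[-\tfrac1{2n}d(\triangle f)(X)-\nabla^2f(\xi,JX)+A(X,J\nabla f)\Big].
\end{equation*}
Comparing this with the simplified $P$ above, and using once more $\nabla^2f(JX,\xi)=\nabla^2f(\xi,JX)+A(X,J\nabla f)$, identifies the bracket with $\tfrac1{2n}P(X)$, which is the first identity. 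The \emph{main obstacle} is the bookkeeping in this cancellation: the Tanaka--Webster curvature has no pair symmetry, so the two curvature traces must be treated separately, and one must check that the torsion terms assemble into exactly the coefficient $4n$ of $P$ while the $\rho$-terms drop and the factor $(n-1)$ emerges uniformly.

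Granting the pointwise identity, the remaining claims are routine. Because $B_0$ is the trace-free $[1]$-part of $\nabla^2f$ and $g$, $\omega$ and $(\nabla^2f)_{[-1]}$ are all $g$-orthogonal to it, one has $|B_0|^2=g(B_0,\nabla^2f)$. Applying the divergence theorem to the horizontal one-form $\beta(Y)=B_0(Y,\nabla f)$, whose horizontal divergence satisfies $(\nabla_{e_a}\beta)(e_a)=(\nabla_{e_a}B_0)(e_a,\nabla f)+g(B_0,\nabla^2f)$, Proposition~\ref{div1} gives $\int_M|B_0|^2\vol=-\int_M(\nabla_{e_a}B_0)(e_a,\nabla f)\vol$; inserting the first identity with $X=\nabla f$ yields $\int_M|B_0|^2\vol=-\tfrac{n-1}{2n}\int_M P(\gr)\vol$. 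Finally, for $n>1$ the left-hand side is non-negative and $\tfrac{n-1}{2n}>0$, hence $-\int_M P(\gr)\vol\ge0$; since a further integration by parts against $Cf=-\nabla^*P$ gives $\int_M f\,Cf\,\vol=-\int_M P(\gr)\vol$, the Paneitz operator is non-negative.
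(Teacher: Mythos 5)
Your proposal is correct and follows essentially the same route as the paper: both commute the third-order derivatives via the Ricci identities \eqref{e:ricci identities}, trace the resulting curvature terms into $Ric$ and $\rho$ so that the factor $(n-1)$ emerges from \eqref{torric}, and then obtain the integral identity and the non-negativity for $n>1$ by the divergence theorem, Proposition~\ref{div1}. The intermediate formula you state for $(\nabla_{e_a}B_0)(e_a,X)$ does agree with $\tfrac{n-1}{2n}P(X)$ once the second Ricci identity $\nabla^2f(JX,\xi)=\nabla^2f(\xi,JX)+A(X,J\nabla f)$ is used, so the cancellation you anticipate indeed works out.
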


\begin{proof}
Taking into account Ricci's identity \eqref{e:ricci identities} we have
\begin{equation}\label{e:Ricci for P}
\begin{aligned}
\nabla ^{3}f(e_{a},e_{a},X)& =\nabla ^{3}f(X,e_{a},e_{a}) +Ric(X,\nabla
f)+4\nabla ^{2}f(\xi ,JX)+2A(JX,\xi ) \\
\nabla ^{3}f(e_{a},Je_{a},JX) &=\nabla ^{3}f(JX,Je_{a},e_{a})-2\rho
(JX,\nabla f)+Ric(JX,J\nabla f)-4n\nabla ^{2}f(\xi ,\ JX)\\  &\hskip3.7in -2A(JX,\gr ).
\end{aligned}
\end{equation}
Therefore, using \eqref{torric} and \eqref{xi1}, which implies $\nabla
^{2}f(JX,\xi )=\frac {1}{2n}\nabla ^{3}f(JX,e_{c},Je_{c})$, we have
\begin{multline}  \label{e:divB}
2\nabla _{e_a}(\nabla ^{2}f)_{[1]}(e_{a},X) =\nabla
^{3}f(X,e_{a},e_{a})-\nabla ^{3}f(JX,e_{a},Je_{a})+\frac{2(n-1)}{n}\nabla
^{3}f(JX,e_{a},Je_{a}) \\
+4(n-1)A(X,J\nabla f).
\end{multline}
The trace part of $B$ is computed as follows
\begin{multline}  \label{e:divtrB}
\nabla _{e_a}\left( \frac{1}{2n}\nabla ^{2}f(e_{c},e_{c})g-df(\xi )\omega
\right) (e_{a},X) =\frac{1}{2n}\nabla ^{3}f(X,e_{c},e_{c})+\nabla
^{2}f(JX,\xi ) \\
=\frac{1}{2n}\nabla ^{3}f(X,e_{a},e_{a})-\frac{1}{2n}\nabla
^{3}f(JX,e_{a},Je_{a})
\end{multline}
The above identities \eqref{e:divB} and \eqref{e:divtrB} imply
\begin{eqnarray*}
(\nabla B_{0})(e_{a},e_{a},X) &=&\frac{n-1}{2n}P(X),
\end{eqnarray*}
which completes the proof of the first formula. The second identity follows by an integration by parts.
\end{proof}
As was observed earlier in \cite{Chi06} and \cite{CCC07}, see also \cite{CaCh07}, if the pseudohermitian torsion vanishes then the Paneitz operator is non-negative also in dimension three.
{ For completeness since this is the case needed in Theorem \ref{main1} and because of its simplicity, we shall prove this fact for an eigenfunction of the sub-Laplacian, $\triangle f =\lambda f$ in the vanishing torsion case using an idea of \cite[Proposition 3.2]{GL88}. Indeed, since $A=0$ we have by the last Ricci identity \eqref{e:ricci identities} and \eqref{e:Pdef}}
\[
[\xi,\triangle]f=0,\hskip.5in P_{f}(X)=\nabla ^{3}f(X,e_{b},e_{b})+\nabla^{3}f(JX,e_{b},Je_{b}),
\]
therefore  with the help of \eqref{xi1} we have (recall \eqref{e:Cdef})
\begin{multline}
\int_M|P_f|^2\vol=\int_M \triangle f\cdot(Cf)\vol +\int_M \nabla^3f(Je_a,e_b,Je_b)\Big (\nabla^3(e_a,e_c,e_c)+\nabla^3f(Je_a,e_c,Je_c) \Big)\vol\\
=\lambda \int_M f\cdot(Cf)\vol -2n\int_M \nabla^2f(e_b,Je_b)\,[\xi,\triangle]f\vol=\lambda\int_M f\cdot(Cf)\vol.
\end{multline}
Thus, since $\triangle f$ is a non-negative operator, if the torsion vanishes and $f$ is an eigenfunction, then the Paneitz operator is non-negative on $f$,
\[
\int_M f\cdot(Cf)\vol\geq 0.
\]
{However, when dealing with a divergence-free pseudohermitian torsion  it is worth noting that the Paneitz operator is non-negative for any function satisfying \eqref{e:hessian}.
In the case of a divergence-free torsion the above identities hold taking into account the last Ricci identity \eqref{e:ricci identities}, the fact that $A$ belongs to the $(2,0)+(0,2)$ space, and definition \eqref{e:Cdef}.}

\subsection{Greenleaf's Bochner formula for the sub-Laplacian}

%Throughout $e_1, \dots, e_{2n}$ shall denote a locally defined orthonormal basis of the horizontal space.
 The first
eigenvalue of the sub-Laplacian is the smallest positive constant in %
\eqref{eig} which we denote by $\lambda_1$.

\begin{thrm}[\protect\cite{Gr}]
On a strictly pseudoconvex pseudohermitian manifold of dimension $2n+1$, $n\geq 1$, the following Bochner-type identity holds
\begin{equation}  \label{bohh}
\frac12\triangle |\nabla f|^2=-g(\nabla(\triangle f),\nabla f)+Ric(\nabla
f,\nabla f)+2A(J\nabla f,\nabla f) +|\nabla df|^2+ 4\nabla df(\xi,J\nabla f).
\end{equation}
\end{thrm}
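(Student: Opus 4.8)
The plan is to treat this as the sub-Riemannian analogue of the Riemannian Bochner--Weitzenb\"ock identity and to mimic the classical derivation, the only genuinely new feature being that the Tanaka--Webster connection has torsion, so commuting covariant derivatives of $f$ produces, besides the pseudohermitian Ricci curvature, extra terms carrying the fundamental form $\omega$ and the pseudohermitian torsion $A$. Since $\nabla$ is a metric connection ($\nabla g=0$), I would first differentiate $|\nabla f|^2=g(\nabla f,\nabla f)$ twice and take the horizontal trace. Writing $\nabla^2 f=\nabla df$ for the horizontal Hessian, this gives
\begin{equation*}
\tfrac12\triangle|\nabla f|^2=-\nabla^3 f(e_a,e_a,\nabla f)-|\nabla df|^2 ,
\end{equation*}
consistent with the definition of the sub-Laplacian in \eqref{lap}; here the second term is the full squared norm of the (in general non-symmetric) horizontal Hessian, and the first is a trace of the third covariant derivative with the gradient inserted in the last slot.

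The heart of the proof is to rewrite $\nabla^3 f(e_a,e_a,\nabla f)$ so that the gradient slot is moved to the front, turning it into $\nabla^3 f(\nabla f,e_a,e_a)=-g(\nabla(\triangle f),\nabla f)$ by metricity and the definition of $\triangle$. For this I would apply the fourth Ricci identity in \eqref{e:ricci identities}, which swaps the first and third arguments of $\nabla^3 f$, with the choice $X=Y=e_a$ (summed) and $Z=\nabla f$. The terms involving $R(e_a,e_a,\cdot,\cdot)$ and $\omega(e_a,e_a)$ drop out by antisymmetry, while the surviving curvature term $-R(e_a,\nabla f,e_a,\nabla f)$ collapses to $Ric(\nabla f,\nabla f)$ using $R(X,Y,Z,V)=-R(X,Y,V,Z)$ and the definition $Ric(A,B)=R(e_a,A,B,e_a)$. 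Using $\omega(X,Y)=g(JX,Y)$ together with the expansion of a horizontal vector in the orthonormal frame, the two $\omega$-weighted Hessian terms combine into $4\,\nabla df(\xi,J\nabla f)$, while the torsion term $2\,\omega(\nabla f,e_a)A(e_a,\nabla f)$ collapses to $2A(J\nabla f,\nabla f)$. Substituting back and collecting reproduces exactly the curvature, torsion and fundamental-form contributions on the right-hand side of \eqref{bohh}.

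The main obstacle I anticipate is purely the careful bookkeeping in this commutation step. In contrast to the Riemannian case the horizontal Hessian is not symmetric---by the first Ricci identity its antisymmetric part is $-2\omega(X,Y)df(\xi)$---and the Ricci identity for the third covariant derivative carries several $\omega$- and $A$-weighted corrections whose contractions must be organized correctly, together with consistent tracking of the sign convention for $\triangle$ fixed in \eqref{lap}. A secondary point worth verifying is that the frame terms arising when tracing $\nabla^3 f(e_a,e_a,\cdot)$ genuinely cancel, which they do because $\{e_a\}$ is orthonormal and $\nabla g=0$; once this is in place the remaining manipulations are routine tensor algebra, and the only nontrivial structural inputs are the second-order symmetry defect and the third-order Ricci identity of \eqref{e:ricci identities}.
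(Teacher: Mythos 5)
Your proposal follows essentially the same route as the paper's proof: expand $\triangle|\nabla f|^2$ from the definition \eqref{lap}, then commute the trace of the third covariant derivative via the Ricci identities \eqref{e:ricci identities} so that the surviving curvature contraction gives $Ric(\nabla f,\nabla f)$ and the $\omega$- and $A$-weighted corrections collapse to $4\nabla^2f(\xi,J\nabla f)$ and $2A(J\nabla f,\nabla f)$, exactly as in \eqref{boh1}--\eqref{boh3}. The only caveat is the sign of the left-hand side: with the positive sub-Laplacian convention of \eqref{lap}, your computation (like the paper's own \eqref{boh1}) produces $-\frac12\triangle|\nabla f|^2$ equal to the right-hand side of \eqref{bohh}, a discrepancy with the displayed statement that is already present in the paper and is harmless since only the integrated identity is used later.
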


\begin{proof}
By definition we have
\begin{equation}  \label{boh1}
-\frac12\triangle |\nabla f|^2=\nabla^3f(e_a,e_a,e_b)
df(e_b)+\nabla^2f(e_a,e_b)\nabla^2f(e_a,e_b) =\nabla^3f(e_a,e_a,e_b) df(e_b)
+ |\nabla^2f|^2.
\end{equation}
To evaluate the first term in the right hand side of \eqref{boh1} we use the
Ricci identities \eqref{e:ricci identities}. Taking into account that
\begin{equation}  \label{par}
(\nabla_X T)(Y,Z)=0,
\end{equation}
applying successively the Ricci identities \eqref{e:ricci identities} and
also \eqref{par} we obtain
\begin{equation}  \label{boh3}
\nabla^3f(e_a,e_a,e_b)df(e_b)= -g(\nabla(\triangle f),\nabla f )+Ric(\nabla
f,\nabla f)+2A(J\nabla f,\nabla f)+4\nabla^2f(\xi,J\nabla f).
\end{equation}
A substitution of \eqref{boh3} in \eqref{boh1} completes the proof of %
\eqref{bohh}.
\end{proof}

We have to evaluate the last term of \eqref{bohh}. We recall the notation \eqref{comp} of
the two components  of the $U(n)$-invariant decomposition of the horizontal
Hessian $\nabla^2f$.
The first integral formula for the last term in \eqref{bohh} originally proved in \cite%
{Gr} follows.

\begin{lemma}[\cite{Gr}]
\label{gr2} On a compact strictly pseudoconvex pseudohermitian CR manifold
of dimension $2n+1$, $n\geq 1$, we have the identity
\begin{equation}  \label{2}
\int_M\nabla^2f(\xi,J\nabla f)Vol_{\theta}=-\int_M\Big[\frac{1}{2n}g(\nabla^2f,\omega)^2
+A(J\nabla f,\nabla f)\Big] Vol_{\theta}.
\end{equation}
\end{lemma}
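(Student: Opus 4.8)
The statement to prove is Lemma~\ref{gr2}, the integral formula
\begin{equation*}
\int_M\nabla^2f(\xi,J\nabla f)Vol_{\theta}=-\int_M\Big[\tfrac{1}{2n}g(\nabla^2f,\omega)^2+A(J\nabla f,\nabla f)\Big]Vol_{\theta}.
\end{equation*}

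The plan is to integrate the pointwise quantity $\nabla^2f(\xi,J\nabla f)$ by parts, turning the vertical ($\xi$) Hessian into something expressible via the horizontal divergence theorem (Proposition~\ref{div1}). The natural first move is to recognize $\nabla^2f(\xi,J\nabla f)=(\nabla_\xi df)(J\nabla f)$ and to rewrite it using one of the Ricci identities in \eqref{e:ricci identities} so that the $\xi$-derivative is swapped to land on a horizontal slot. Concretely, I would use the identity $\nabla^2f(X,\xi)-\nabla^2f(\xi,X)=A(X,\nabla f)$ (the second line of \eqref{e:ricci identities}) to replace $\nabla^2f(\xi,J\nabla f)$ by $\nabla^2f(J\nabla f,\xi)-A(J\nabla f,\nabla f)$; the torsion term $A(J\nabla f,\nabla f)$ is precisely the one appearing on the right-hand side of \eqref{2}, so this already isolates it. The remaining task is then to show
\begin{equation*}
\int_M\nabla^2f(J\nabla f,\xi)Vol_{\theta}=-\frac{1}{2n}\int_M g(\nabla^2f,\omega)^2Vol_{\theta}.
\end{equation*}

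To handle $\int_M\nabla^2f(J\nabla f,\xi)Vol_{\theta}$ I would produce a horizontal divergence whose integral vanishes by Proposition~\ref{div1}. The key algebraic input is the fundamental formula \eqref{xi1}, $g(\nabla^2f,\omega)=\nabla^2f(e_a,Je_a)=-2n\,df(\xi)$, which expresses the contraction of the Hessian against $\omega$ in terms of the vertical derivative $df(\xi)$. So I would consider the horizontal one-form $\sigma(Y)=df(\xi)\,df(JY)$, or more symmetrically $\sigma=df(\xi)\cdot J\nabla f$, and compute its divergence $\nabla^*\sigma=-(\nabla_{e_a}\sigma)(e_a)$. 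Expanding by Leibniz, one derivative hits $df(\xi)$ producing $\nabla^2f(e_a,\xi)\,df(Je_a)$ (a trace that reassembles into $\nabla^2f(J\nabla f,\xi)$ after using the antisymmetry of $J$), while the other derivative hits $df(Je_a)$ producing $df(\xi)\,\nabla^2f(e_a,Je_a)=df(\xi)\,g(\nabla^2f,\omega)$, which by \eqref{xi1} equals $-2n\,df(\xi)^2=-\tfrac{1}{2n}g(\nabla^2f,\omega)^2$. Integrating $\nabla^*\sigma$ to zero then relates the two surviving traces, yielding exactly the desired identity up to bookkeeping of signs and the $J$-induced factors.

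The main obstacle I anticipate is the careful bookkeeping in the divergence computation: I must correctly account for the parallelism $\nabla J=0$ (so $J$ can be commuted through covariant derivatives), for the skew-symmetry $\nabla^2f(e_a,\xi)\,df(Je_a)$ versus $\nabla^2f(Je_a,\xi)\,df(e_a)$, and for the fact that $\nabla\xi=0$ guarantees $\nabla_{e_a}(df(\xi))=\nabla^2f(e_a,\xi)$ with no extra connection terms. A second subtlety is ensuring that the trace $\nabla^2f(J\nabla f,\xi)$ that emerges from the divergence matches the one left over from the Ricci-identity step, rather than its transpose; here I would again invoke the second Ricci identity to control the discrepancy, which conveniently feeds back into the $A(J\nabla f,\nabla f)$ term and is consistent with the stated formula. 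Once the signs are pinned down via \eqref{xi1} and Proposition~\ref{div1}, the identity \eqref{2} follows directly.
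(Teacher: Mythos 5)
Your proposal is correct and follows essentially the same route as the paper: you integrate the divergence of the one-form $\sigma(Y)=df(\xi)\,df(JY)$ (the paper's $D_2$), using the second Ricci identity of \eqref{e:ricci identities} to produce the torsion term and the fundamental formula \eqref{xi1} to convert $df(\xi)\,g(\nabla^2f,\omega)$ into $-\tfrac{1}{2n}g(\nabla^2f,\omega)^2$. The only difference is cosmetic: you apply the Ricci identity before forming the divergence rather than inside the divergence computation.
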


\begin{proof}
Integrating \eqref{xi1} we compute
\begin{equation}  \label{vert1}
4n^2\int_M \left ( \xi f\right )^2 Vol_{\theta}=-2n\int_M
g(\nabla^2f,\omega)\cdot df(\xi)Vol_{\theta}.
\end{equation}
Let us consider the horizontal 1-form defined by
\begin{equation*}
D_2(X)=df(JX)df(\xi)
\end{equation*}
whose divergence is, taking into account the second formula of
\eqref{e:ricci
identities},
\begin{equation}  \label{vert2}
\nabla^*D_2=g(\nabla^2f,\omega)\cdot df(\xi)-\nabla^2f(\xi,J\nabla
f)-A(J\nabla f,\nabla f).
\end{equation}
Integrating \eqref{vert2} over $M$ and using \eqref{vert1} implies \eqref{2}
which completes the proof of the lemma.
\end{proof}

We shall need one more representation of the last term in \eqref{bohh}.

\begin{lemma}\label{gr3}
On a compact strictly pseudoconvex pseudohermitian CR manifold
of dimension $2n+1$, $n\geq 1$, we have the identity
\begin{equation*}
\nabla ^{2}f(\xi ,Z)=\frac{1}{2n}\nabla ^{3}f(Z,Je_a,e_a)-A(Z,\nabla f).
\end{equation*}
In addition,
\begin{equation*}
\int_{M}\nabla ^{2}f(\xi ,J\nabla f)Vol_{\theta }=\int_{M}-\frac{1}{2n}\left(
\triangle f\right) ^{2}+A(J\nabla f,\nabla f)-\frac{1}{2n}P(\nabla
f)Vol_{\theta }.
\end{equation*}
\end{lemma}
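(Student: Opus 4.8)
The plan is to establish the pointwise identity first and then substitute $Z = J\nabla f$ and integrate to obtain the second formula. For the pointwise part, which (as the acknowledgment indicates) is a direct consequence of the Ricci identities, I would begin from the fundamental formula \eqref{xi1}, $\nabla^2 f(e_a, Je_a) = -2n\,df(\xi)$, and differentiate it covariantly along an arbitrary horizontal field $Z$. Because $g$, $\omega$, $J$ and $\xi$ are all parallel for the Tanaka--Webster connection, the contraction commutes with $\nabla_Z$, and since $\nabla\xi = 0$ one has $\nabla_Z\big(df(\xi)\big) = \nabla^2 f(Z,\xi)$; hence $\nabla^3 f(Z,e_a,Je_a) = -2n\,\nabla^2 f(Z,\xi)$. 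The elementary identity $\sum_a T(Je_a,e_a) = -\sum_a T(e_a,Je_a)$, valid for any horizontal $2$-tensor $T$ (replace the orthonormal frame $e_a$ by $Je_a$), then gives $\frac{1}{2n}\nabla^3 f(Z,Je_a,e_a) = \nabla^2 f(Z,\xi)$. Finally the second Ricci identity of \eqref{e:ricci identities}, $\nabla^2 f(Z,\xi) - \nabla^2 f(\xi,Z) = A(Z,\nabla f)$, converts this into the asserted $\nabla^2 f(\xi,Z) = \frac{1}{2n}\nabla^3 f(Z,Je_a,e_a) - A(Z,\nabla f)$.

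For the integral identity I would put $Z = J\nabla f$ in the pointwise formula and integrate over $M$, so that everything reduces to evaluating $\int_M \nabla^3 f(J\nabla f, Je_a, e_a)\,Vol_{\theta}$. Using the same frame-swap to write $\nabla^3 f(J\nabla f, Je_a, e_a) = -\nabla^3 f(J\nabla f, e_a, Je_a)$ and then reading off the definition \eqref{e:Pdef} of $P$, I can solve for this trace in terms of $\nabla^3 f(\nabla f, e_a, e_a)$, the quantity $P(\nabla f)$, and a torsion term $A(\nabla f, J\nabla f)$. The remaining horizontal trace is computed directly: from $\nabla^2 f(e_a,e_a) = -\triangle f$ one gets $\nabla^3 f(\nabla f, e_a, e_a) = -g(\nabla(\triangle f),\nabla f)$, and an integration by parts via Proposition~\ref{div1} applied to the horizontal $1$-form $\sigma(X) = (\triangle f)\,df(X)$ yields $\int_M \nabla^3 f(\nabla f, e_a, e_a)\,Vol_{\theta} = -\int_M (\triangle f)^2\,Vol_{\theta}$. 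Collecting the pieces and using the symmetry $A(\nabla f, J\nabla f) = A(J\nabla f, \nabla f)$ to combine the two torsion contributions (a factor $2$ coming from the trace against $P$ minus the $A$ coming straight from the pointwise identity) produces exactly the stated coefficient $+A(J\nabla f, \nabla f)$.

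The argument is essentially bookkeeping, so the one thing to watch carefully is the sign in the two frame-swaps and the placement of $J$ in \eqref{e:Pdef}; a single dropped sign there would flip the coefficient of the torsion term. I expect no genuine analytic difficulty: compactness is used only through the single application of the divergence theorem, and no curvature or positivity hypothesis is needed, which is precisely why the statement is uniform in $n \ge 1$.
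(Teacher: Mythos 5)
Your proof is correct and follows essentially the same route as the paper: the pointwise identity comes from antisymmetrizing the last two slots of $\nabla^3 f$ (you do this by differentiating the traced Ricci identity \eqref{xi1}, the paper by differentiating the first Ricci identity and then tracing --- the same computation), combined with the frame swap $e_a\mapsto Je_a$ and the second Ricci identity. The integral formula is then obtained exactly as in the paper, from the definition \eqref{e:Pdef} of $P$ and one integration by parts.
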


\begin{proof}
We compute using the first two Ricci identity in \eqref{e:ricci identities}
\begin{multline*}
2\nabla ^{3}f(Z,Je_a,e_a)=\nabla ^{3}f(Z,Je_a,e_a)-\nabla ^{3}f(Z,e_a,Je_a)=-2\omega(Je_a,e_a)\nabla^2 f(Z,\xi)\\=4n\left (  \nabla^2f(\xi,Z) +A(Z,\gr)\right),
\end{multline*}
which proves the first formula.  The second identity follows from the above formula, the definition of $P$, and an integration by parts.
\end{proof}

\subsection{The CR Lichneorwicz type theorem.}
At this point we are ready to give in details, using  real notation as in \cite%
{IVZ,IV2}, the known version of the Lichnerowicz' result on a
compact strictly pseudo-convex CR manifold \cite{Gr},
\cite{LL}, \cite{Chi06}.
\begin{thrm}[\protect\cite{Gr}, \protect\cite{LL}, \protect\cite{Chi06}]
\label{main1} Let $(M, \theta)$ be a compact strictly pseudoconvex
pseudohermitian CR manifold of dimension $2n+1$. Suppose there is a positive
constant $k_0$ such that the pseudohermitian Ricci curvature $Ric$ and the
pseudohermitian torsion $A$ satisfy the inequality
\begin{equation}  \label{condm-app}
Ric(X,X)+ 4A(X,JX)\geq k_0 g(X,X).
\end{equation}
a) If $n>1$, then any positive eigenvalue $\lambda$ of the sub-Laplacian $%
\triangle$ satisfies the inequality
\begin{equation*}
\lambda \ge \frac{n}{n+1}k_0.
\end{equation*}

b) If $n=1$ and the Paneitz operator is non-negative, i.e.,
\begin{equation}  \label{e:nonnegativeP}
-\int_M P_{f}(\nabla f)Vol_{\theta}\geq 0.
\end{equation}
where $f$ is a smooth function and
\begin{equation*}
P_{f}(X)=\nabla ^{3}f(X,e_{b},e_{b})+\nabla
^{3}f(JX,e_{b},Je_{b})+4A(X,J\nabla f),
\end{equation*}
then
\begin{equation*}
\lambda \ge \frac12k_0.
\end{equation*}
\end{thrm}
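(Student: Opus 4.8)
The plan is to integrate Greenleaf's Bochner identity \eqref{bohh} over the compact manifold. Its left-hand side $\frac12\triangle|\nabla f|^2$ is the negative of a horizontal divergence, so it integrates to zero by Proposition~\ref{div1}. For an eigenfunction $\triangle f=\lambda f$ one has $\int_M g(\nabla(\triangle f),\nabla f)\vol=\lambda\int_M|\nabla f|^2\vol$ and, after one more integration by parts, $\int_M(\triangle f)^2\vol=\lambda\int_M|\nabla f|^2\vol$. Hence the whole argument reduces to rewriting the two remaining terms of \eqref{bohh}, namely the horizontal Hessian norm $|\nabla df|^2$ and the vertical term $4\nabla df(\xi,J\nabla f)$, in a form that exposes the hypothesis curvature combination $Ric+4A$.

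First I would expand $|\nabla df|^2$ through the $U(n)$-invariant decomposition of the horizontal Hessian. Its antisymmetric part equals $-df(\xi)\omega$ by the first Ricci identity in \eqref{e:ricci identities}, while its symmetric part splits into the trace term, the traceless $(1,1)$-piece $B_0$, and the $(2,0)+(0,2)$-piece $(\nabla^2f)_{[-1]}$. Orthogonality of these four pieces, together with $|g|^2=|\omega|^2=2n$ and $g(\nabla^2f,\omega)=-2n\,df(\xi)$ from \eqref{xi1}, gives
\[
|\nabla df|^2=\frac{1}{2n}(\triangle f)^2+|B_0|^2+|(\nabla^2f)_{[-1]}|^2+2n\,(df(\xi))^2.
\]
The two middle terms are non-negative and will simply be discarded; the first and last are the ones I can track explicitly.

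The crucial step is to represent the vertical term by the right weighted average of Lemma~\ref{gr2} and Lemma~\ref{gr3}. After integration I would take $4\nabla df(\xi,J\nabla f)$ as $1\cdot(\text{Lemma }\ref{gr2})+3\cdot(\text{Lemma }\ref{gr3})$; these weights are forced by three simultaneous demands: they must sum to $4$, the $(df(\xi))^2$ contribution must cancel the $2n(df(\xi))^2$ coming from $|\nabla df|^2$, and the torsion terms $A(J\nabla f,\nabla f)$ must combine with $Ric(\nabla f,\nabla f)$ into exactly $Ric+4A$. This same combination yields the fourth-order term $-\frac{3}{2n}\int_M P(\nabla f)\vol$, which is non-negative precisely by the Paneitz non-negativity (automatic for $n>1$ by Lemma~\ref{l:GrLee}, and assumed for $n=1$ via \eqref{e:nonnegativeP}), and it reduces the coefficient of $(\triangle f)^2$ to $-\tfrac1n$. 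Collecting everything, using $Ric+4A\ge k_0g$ from \eqref{condm-app} and $\int_M(\triangle f)^2\vol=\lambda\int_M|\nabla f|^2\vol$, and discarding the non-negative $|B_0|^2$, $|(\nabla^2f)_{[-1]}|^2$ and Paneitz terms, the integrated identity collapses to
\[
0\ \ge\ \Big(k_0-\lambda-\tfrac{\lambda}{n}\Big)\int_M|\nabla f|^2\vol.
\]
Since $\int_M|\nabla f|^2\vol=\lambda\int_M f^2\vol>0$, this gives $\lambda\ge\frac{n}{n+1}k_0$, which is part a), and becomes $\lambda\ge\frac12k_0$ for $n=1$, which is part b).

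The main obstacle is less any individual computation than the bookkeeping needed to find the weights $(1,3)$: one has to notice that a single linear combination of the two vertical-term identities can be arranged to cancel the $(\xi f)^2$ contributions, rebuild the hypothesis combination $Ric+4A$, and surface the Paneitz term with a favorable sign all at once. Once this is in place, the non-negativity of the Paneitz operator is exactly what is required to drop its fourth-order contribution, leaving only the eigenvalue arithmetic. The two cases $n>1$ and $n=1$ are handled uniformly, differing only in whether the Paneitz non-negativity is a theorem (Lemma~\ref{l:GrLee}) or a standing assumption.
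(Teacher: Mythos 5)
Your argument is correct and is essentially the paper's own proof: both integrate Greenleaf's Bochner identity \eqref{bohh}, handle the vertical term $4\nabla^2f(\xi,J\nabla f)$ by combining Lemma~\ref{gr2} and Lemma~\ref{gr3} (your weights $(1,3)$ reproduce exactly the paper's substitution of $4\times$Lemma~\ref{gr3} followed by \eqref{e:Afrom2lemmas}), and discard the non-negative $|B_0|^2$, $|(\nabla^2f)_{[-1]}|^2$ and Paneitz contributions, the last via Lemma~\ref{l:GrLee} for $n>1$ or hypothesis \eqref{e:nonnegativeP} for $n=1$. The only cosmetic difference is that you expand $|(\nabla^2f)_{[1]}|^2$ directly through the orthogonal decomposition into $B_0$ plus trace parts, whereas the paper phrases the same fact as the projection inequality \eqref{coshy3}.
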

As well known the standard CR structure on the sphere achieves equality in this
inequality.   We
note that the assumption on the pseudohermitian Ricci curvature of the
Tanaka-Webster connection and the pseudohermitian torsion can be put in the
equivalent form
\begin{equation*}
Ric(X,X)+ 4A(X,JX)=\rho(JX,X)+2(n+1)A(JX,X)
\end{equation*}
using the pseudohermitian Ricci 2-form $\rho$ of the Tanaka-Webster
connection.
We turn to the proof of Theorem \ref{main1}.
\begin{proof}
Integrating the Bochner type formula \eqref{bohh} we obtain
\begin{multline}  \label{bohin}
0=\int_M\Big[-(\triangle f)^2+\left |(\nabla^2f)_{[1]}\right |^2 + \left
|(\nabla^2f)_{[-1]}\right |^2+Ric(\nabla f,\nabla f)+2A(J\nabla f,\nabla f)
+ 4\nabla^2f(\xi,J\nabla f)\Big] Vol_{\theta}.
\end{multline}
We use Lemma \ref{gr3} to represent the last term, which turns the above identity in the following
\begin{multline}  \label{e:bohin}
0=\int_M\Big[-(\triangle f)^2+\left |(\nabla^2f)_{[1]}\right |^2 + \left
|(\nabla^2f)_{[-1]}\right |^2+Ric(\nabla f,\nabla f)+6A(J\nabla f,\nabla f)\\
-\frac {2}{n}(\triangle f)^2 -\frac {2}{n}P(\gr)\Big]\vol.
\end{multline}
Lemma \ref{gr2} and Lemma \ref{gr3} give the following identity
\begin{equation}\label{e:Afrom2lemmas}
2\int_M  A(J\nabla f,\nabla f)\vol =\int_M\Big[ -\frac {1}{2n}g(\nabla^2 f,\omega)^2+\frac {1}{2n}(\triangle f)^2+\frac {1}{2n}P(\gr)\Big]\vol.
\end{equation}
A substitution of \eqref{e:Afrom2lemmas} this in \eqref{e:bohin} together with \eqref{condm} we obtain for an eigenfunction $\triangle f=\lambda f$ the inequality
\begin{multline}\label{e:bohin1}
0\geq\int_M\Big[-(\triangle f)^2+\left |(\nabla^2f)_{[1]}\right |^2 + \left
|(\nabla^2f)_{[-1]}\right |^2+k_0|\gr|^2-\frac {1}{2n}g(\nabla^2 f,\omega)^2
-\frac {3}{2n}(\triangle f)^2 -\frac {3}{2n}P(\gr)\Big]\vol\\
=\int_M\Big[\left (-\frac {n+1}{n}\lambda+k_0\right )|\gr|^2+\left |(\nabla^2f)_{[1]}\right |^2 -\frac {1}{2n}(\triangle f)^2-\frac {1}{2n}g(\nabla^2 f,\omega)^2+ \left
|(\nabla^2f)_{[-1]}\right |^2
 -\frac {3}{2n}P(\gr)\Big]\vol.
\end{multline}
A projection on the span of the orthonormal set $\Big \{\frac{1}{\sqrt{2n}}%
g,\ \frac{1}{\sqrt{2n}}w\Big\}$ in the $(1,1)$ space gives
\begin{equation}  \label{coshy3}
|(\nabla^2f)_{[1]}|^2\ge \frac {1}{2n}(\triangle f)^2+\frac {1}{2n}\left ( g
(\nabla^2f,\omega)\right )^2
\end{equation}
with equality iff
\begin{equation}  \label{equality hessian}
(\nabla^2f)_{[1]}= \frac {1}{2n}(\triangle f)\cdot g+\frac {1}{2n}g
(\nabla^2f,\omega)\cdot\omega.
\end{equation}
We obtain from \eqref{e:bohin1} taking into account \eqref{coshy3} the inequality

\begin{equation}  \label{e:obata ineq}
0\geq\int_M\Big[\left (-\frac {n+1}{n}\lambda+k_0\right )|\gr|^2
+ \left
|(\nabla^2f)_{[-1]}\right |^2 -\frac {3}{2n}P(\gr)\Big]\vol.
\end{equation}
This implies Greenleaf's inequality
\begin{equation*}
\lambda \ge \frac{n}{n+1}k_0.
\end{equation*}
taking into account Lemma \ref{l:GrLee}. This completes the proof of Theorem~\ref{main1}.
\end{proof}


\begin{thebibliography}{99}
\bibitem{Bau2} Baudoin, F., \& Garofalo, N., \emph{Curvature-dimension
inequalities and Ricci lower bounds for sub-Riemannian manifolds with
transverse symmetries}, arXiv:1101.3590.

\bibitem{Bau3} Baudoin, F., \& Kim. B., \emph{Sobolev, Poincare and
isoperimetric inequalities for subelliptic diffusion operators satisfying a
generalized curvature dimension inequality,} arXiv:1203.3789.

\bibitem{Bar} Barletta, E., \emph{The Lichnerowicz theorem on CR manifolds.}
Tsukuba J. Math. 31 (2007), no. 1, 77--97.

\bibitem{Be80} Bedford, E., \emph{$(\partial\bar\partial)_b$ and the real
parts of CR functions.} Indiana Univ. Math. J. 29 (1980), no. 3, 333--340.

\bibitem{BF74} Bedford, E., \& Federbush, P., \emph{Pluriharmonic boundary
values.} Tohoku Math. J. (2) 26 (1974), 505--511.

\bibitem{BoCho} Boeckx, E., Cho, J.T., \emph{$\eta$-parallel contact metric
spaces.} Diff. Geom. Appl. 22, 275--285 (2005).

\bibitem{CaCh07}
Cao, J., \& Chang, S-C., \emph{Pseudo-Einstein and Q-flat metrics with eigenvalue estimates on CR-hypersurfaces.} Indiana Univ. Math. J. 56 (2007), no. 6, 2839--2857.

\bibitem{CC09a} Chang, S.-C., \& Chiu, H.-L., \emph{Nonnegativity of CR
Paneitz operator and its application to the CR Obata's theorem.} J. Geom.
Anal. 19 (2009), 261--287.

\bibitem{CC09b} \bysame, \emph{On the CR analogue of Obata's s theorem in a
pseudohermitian 3-manifold.} Math. Ann. 345(2009), 33--51.

\bibitem{CC07} Chang, S.-C., \& Chiu, H.-L.,\emph{On the estimate of the
first eigenvalue of a sublaplacian on a pseudohermitian 3-manifold.} Pacific
J. Math. 232 (2007), no. 2, 269--282.

\bibitem{CCC07} Chang, S.-C., Cheng, J.-H., \& Chiu, H.-L., \emph{A fourth
order curvature flow on a CR 3-manifold.} Indiana Univ. Math. J. 56 (2007),
no. 4, 1793--1826.

\bibitem{ChChY10} Chanillo, S., Chiu, H.-L., Yang, P., \emph{Embeddability
for the three dimensional CR manifolds and CR Yamabe invariants.}
arXiv:1007.5020v3.

\bibitem{ChTW10} Chang, S.-C., Tie, J., \& Wu, C.-T., \emph{Subgradient
estimate and Liouville-type theorem for the CR heat equation on Heisenberg
groups.} Asian J. Math. 14 (2010), no. 1, 41--72.

\bibitem{ChW} Chang, S.-C., \& Wu, C.-T.,\emph{The entropy formulas for the $%
CR$ heat equation and their applications on pseudohermitian $(2n + 1)$%
-manifolds.} Pacific J. Math., 246 (2010), no. 1, 1--29.

\bibitem{ChW12} \bysame, \emph{The diameter estimate and its application to
CR Obata's Theorem on closed pseudohermitian $(2n+1)$-manifolds.} To appear
in Trans. Amer. Math. Soc.

\bibitem{Chi06} Chiu, H.-L.,\emph{The sharp lower bound for the first
positive eigenvalue of the subLaplacian on a pseudohermitian 3-manifold.}
Ann. Global Anal. Geom. 30 (2006), no. 1, 81--96.

\bibitem{DT} Dragomir, S. \& Tomassini, G. Differential geometry and
analisys on CR manifolds, Progress in Math. vol. 246, Birkh\"{a}user Boston,
Inc., Boston, MA, 2006.

\bibitem{Gallot79} Gallot, S.,\emph{\'{E}quations diff\'{e}rentielles
caracteristiques de la sph\'{e}re.} Ann. Sci. \'{E}cole Norm. Sup. (4) 12
(1979), no. 2, 235--267.

\bibitem{GoGr05} Gover, A. R., \& Graham, C. R., \emph{CR invariant powers
of the sub-Laplacian.} J. Reine Angew. Math. 583 (2005), 1--27.

\bibitem{GL88} Graham,C.R., \& Lee, J.M., \emph{Smooth solutions of
degenerate Laplacians on strictly pseudoconvex domains.} Duke Math. J.,
57(1988), 697--720.

\bibitem{Gra83ab} Graham, C. R., \emph{The Dirichlet problem for the Bergman
Laplacian.} I. Comm. Partial Differential Equations 8 (1983), no. 5,
433--476. II. Ibid. 8 (1983), no. 6, 563--641.

\bibitem{Gr} Greenleaf, A., \emph{The first eigenvalue of a subLaplacian on
a pseudohermitian manifold.} Commun. Partial Diff. Equations, 10 (1985), no.
2, 191--217.

\bibitem{Hi93} Hirachi, K., \emph{Scalar pseudo-hermitian invariants and the
Szeg\"{o} kernel on three-dimensional CR manifolds}, in: Complex Geometry,
1990 Osaka Conf. Proc. Marcel Dekker Lect. Notes Pure Appl. Math. 143
(1993), 67--76.

\bibitem{Hla} Hladky, R., \emph{Bounds for the first eigenvalue of the
horizontal Laplacian in positively curved sub-Riemannian manifolds},
arXiv:1111.5004.

\bibitem{IMV} Ivanov, S., Minchev, I., \& Vassilev, D., \emph{Quaternionic
contact Einstein structures and the quaternionic contact Yamabe problem},
preprint, math.DG/0611658.

\bibitem{IPV1} Ivanov, S., Petkov, A., \& Vassilev, D., \emph{The sharp
lower bound of the first eigenvalue of the sub-Laplacian on a quaternionic
contact manifold.} arXiv:1112.0779.

\bibitem{IV2} Ivanov, S., \& Vassilev, D.,, \emph{Extremals for the Sobolev
Inequality and the Quaternionic Contact Yamabe Problem}, Imperial College
Press Lecture Notes, World Scientific Publishing Co. Pte. Ltd., Hackensack,
NJ, 2011.%
% xviii+219 pp. ISBN: 978-981-4295-70-3; 981-4295-70-1.\cite{IV2}

\bibitem{IVZ} Ivanov, S., Vassilev, D., \& Zamkovoy, S., \emph{Conformal
Paracontact curvature and the local flatness theorem}, Geom. Dedicata
\textbf{144} (2010), 79--100.

\bibitem{Li} Lichnerowicz, A.,\emph{G\'{e}om\'{e}trie des groupes de
transformations.} Travaux et Recherches Mathematiques, III. Dunod, Paris
1958.

\bibitem{L1} Lee, J., \emph{Pseudo-einstein structures on CR manifolds},
Amer. J. Math., \textbf{110} ~(1988), 157--178.



\bibitem{Le92} Lempert, L., \emph{On three-dimensional Cauchy-Riemann manifolds.} J. Amer. Math. Soc. 5 (1992), no. 4, 923--969.

\bibitem{LL} Li, S.-Y., \& Luk, H.-S.,\emph{The sharp lower bound for the
first positive eigenvalue of a sub-Laplacian on a pseudo-Hermitian manifold.}
Proc. Amer. Math. Soc. 132 (2004), no. 3, 789--798.


\bibitem{O3} Obata, M., \emph{Certain conditions for a Riemannian manifold
to be iosometric with a sphere.} J. Math. Soc. Japan \textbf{14}, no.3,
(1962), 333--340.

\bibitem{Pe} Petit, R., \emph{Mok-Siu-Yeung type formulas on contact locally
sub-symmetric spaces.} Ann. Global Anal. Geom. 35 (2009), no. 1, 1--37.

\bibitem{T} Tanaka, N., \emph{A differential geometric study on strongly
pseudo-convex manifolds}, Lectures in Mathematics, Department of
Mathematics, Kyoto University, No. 9. Kinokuniya Book-Store Co., Ltd.,
Tokyo, 1975.

\bibitem{W} Webster, S. M., \emph{Real hypersurfaces in complex space},
Thesis, University of California, 1975.

\bibitem{W1} Webster, S. M., \emph{Pseudo-hermitian structures on a real
hypersurface}, J.Diff. Geom., \textbf{13} (1979), 25--41.

\end{thebibliography}
\end{document}